\DeclarePairedDelimiter{\ceil}{\lceil}{\rceil}
\DeclarePairedDelimiter{\floor}{\lfloor}{\rfloor}
\patchcmd{\ps@pprintTitle}{\footnotesize\itshape
       Preprint submitted to \ifx\@journal\@empty Elsevier
       \else\@journal\fi\hfill\today}{\relax}{}{}
\newtheorem{theorem}{Theorem}[section]
\newtheorem{lemma}[theorem]{Lemma}
\newtheorem{corollary}[theorem]{Corollary}
\newtheorem{prop}[theorem]{Proposition}
\newtheorem*{con}{Conjecture}
\newtheorem{remark}[theorem]{Remark}
\theoremstyle{definition}
\newtheorem{definition}[theorem]{Definition}
\newtheorem{example}[theorem]{Example}
\newtheorem*{th39}{Theorem 3.9}
\newtheorem*{th311}{Theorem 3.11}
\newtheorem*{th46}{Theorem 4.6}
\newtheorem*{th55}{Theorem 5.5}
\newtheorem*{th56}{Theorem 5.6}
\newtheorem*{th61}{Theorem 6.1}
\newtheorem*{th65}{Theorem 6.5}
\newtheorem*{th73}{Theorem 7.3}
\numberwithin{equation}{theorem}
\DeclareMathOperator{\Ker}{\mathit{ker}}
\DeclareMathOperator{\im}{\mathit{im}}
\DeclareMathOperator{\M}{\mathit{M(G)}}
\DeclareMathOperator{\MM}{\mathit{M}}
\DeclareMathOperator{\e}{\mathit{exp}}
\DeclareMathOperator{\Z}{\mathit{Z(G)}}
\DeclareMathOperator{\ZZ}{\mathit{Z}}
\DeclareMathOperator{\p}{\mathit{p}}
\DeclareMathOperator{\cc}{\mathit{c}}
\DeclareMathOperator{\dd}{\mathit{d}}
\DeclareMathOperator{\G}{\mathit{G}}
\DeclareMathOperator{\N}{\mathit{N}}
\DeclareMathOperator{\HH}{\mathit{H}}
\DeclareMathOperator{\E}{\mathit{E}}
\begin{document}

\begin{frontmatter}

\title{ON THE EXPONENT CONJECTURE OF SCHUR}

 \author[IISER TVM]{A.E. Antony}
\ead{ammu13@iisertvm.ac.in}
\author[IISER TVM]{P. Komma}
\ead{patalik16@iisertvm.ac.in}
\author[IISER TVM]{V.Z. Thomas\corref{cor1}}
\address[IISER TVM]{School of Mathematics,  Indian Institute of Science Education and Research Thiruvananthapuram,\\695551
Kerala, India.}
\ead{vthomas@iisertvm.ac.in}
\cortext[cor1]{Corresponding author. \emph{Phone number}: +91 8078020899.}

\begin{abstract}
It is a longstanding conjecture that for a finite group $\G$, the exponent of the second homology group $\HH_2(\G, \mathbb{Z})$ divides the exponent of $\G$. In this paper, we prove this conjecture for $\p$-groups of class at most $\p$, finite nilpotent groups of odd exponent and of nilpotency class 5, $\p$-central metabelian $\p$-groups, and groups considered by L. E . Wilson in \cite{LEW}. Moreover, we improve several bounds given by various authors. We achieve most of our results using an induction argument.

\end{abstract}

\begin{keyword}
 Schur multiplier \sep regular $\p$-groups \sep powerful $\p$-groups  \sep Schur cover \sep group actions. 
 \MSC[2010]   20B05 \sep 20D10 \sep 20D15 \sep 20F05 \sep 20F14 \sep 20F18 \sep 20G10 \sep 20J05 \sep 20J06 
\end{keyword}

\end{frontmatter}

\section{Introduction}
The Schur multiplier of a group $\G$, denoted by $\M$ is the second homology group of $\G$ with coefficients in $\mathbb{Z}$, i.e $\M=\HH_2(\G, \mathbb{Z})$. A longstanding conjecture attributed to I. Schur says that for a finite group $\G$,
\begin{equation}\label{E1}
\tag{1}  \e(\M)|\e(\G). 
\end{equation}

To prove ($\ref{E1}$), it is enough to restrict ourselves to $\p$-groups using a standard argument given in Theorem 4, Chapter IX of \cite{JPS}. A. Lubotzky and A. Mann showed that (\ref{E1}) holds for powerful $\p$-groups(\cite{LM}), M. R. Jones in \cite{MRJ} proved that (\ref{E1}) holds for groups of class 2,  P. Moravec showed that the conjecture holds for groups of nilpotency class at most 3, groups of nilpotency class 4 and of odd order, potent $\p$-groups, metabelian $\p$-groups of exponent $\p$, $\p$-groups of class at most $\p-2$ (\cite{PM1}, \cite{PM2}, \cite{PM3}, and \cite{PM4}) and some other classes of groups. The general validity of $(\ref{E1})$ was disproved by A. J. Bayes, J. Kautsky and J. W. Wamsley in \cite{BKW}. Their counterexample involved a 2-group of order $2^{68}$ with $\e(G)=4$ and $\e(\M)=8$. Nevertheless for finite groups of odd exponent, this conjecture remains open till date. This problem has remained open even for finite $\p$-groups of class 5 having odd exponent. The purpose of this paper is to prove (\ref{E1}) for finite $\p$-groups of class at most $\p$ and finite $\p$-groups of class 5 with odd exponent. Moreover we also prove the above mentioned results of \cite{LM}, \cite{PM1}, \cite{PM2}, \cite{PM3} and \cite{PM4} for odd primes and hence proving all these results using a common technique and bringing them under one umbrella. We briefly describe the organization of the paper by listing the main results according to their sections.

In \cite{IS}, I. Schur proves that if $\Z$ has finite index in $G$, then the commutator subgroup $\gamma_2(G)$ is finite. In the next Theorem, we generalize this classical theorem of Schur for a $\p$-group $\G$ of class at most $\p+1$. We use this generalization to prove the conjecture for $\p$-groups of class at most $\p$.

\begin{th39}
Let $\p$ be an odd prime and $\G$ be a $\p$-group of nilpotency class $\p+1$. If $\G$ is $\p^n$-central, then $\e( \gamma_2(\G)) \mid \p^n$.
\end{th39}

In \cite{PM3}, the author shows that $(\ref{E1})$ holds for $\p$-groups of class less than or equal to $\p-2$. Authors of \cite{MHM1} prove the same for groups of class less than or equal to $\p-1$. In the next Theorem, we generalize both the above results by proving

\begin{th311}
Let $\p$ be an odd prime and $\G$ be a finite $\p$-group. If the nilpotency class of $\G$ is at most $\p$, then $\e(\G \wedge \G)\mid \e(\G)$. In particular, $\e(\M)\mid \e(\G)$.
\end{th311}

In \cite{PM5}, the author proves that if $\G$ is a group of nilpotency class 5, then $\e(\M)\mid (\e(\G))^2$. In the next Theorem, we improve this bound and in fact prove the conjecture for groups with odd exponent.

\begin{th46}
Let $\G$ be a finite $\p$-group of nilpotency class 5. If $\p$ is odd, then $\e(\G\wedge \G)\mid \e(\G)$. In particular, $\e(\M)\mid \e(\G)$.
\end{th46}

In Section $5$, we prove two main theorems which we list below. The second condition in the next Theorem generalizes the definition of powerful $2$-groups for odd primes and were considered by L. E. Wilson in \cite{LEW}, and the first condition includes the class of groups considered by D. Arganbright in \cite{DA}, and it also includes the class of potent $\p$-groups considered by J. Gonzalez-Sanchez and  A. Jaikin-Zapirain in \cite{SZ}. Thus as a corollary of the next Theorem, we obtain the well-known result that (\ref{E1}) holds for powerful $\p$-groups (\cite{LM}) and potent $\p$-groups (\cite{PM4}). 

\begin{th55}
Let $\p$ be an odd prime and $\G$ be a finite $\p$-group satisfying either of the conditions below,
\begin{itemize}
\item[$(i)$] $\gamma_m(\G)\subset \G^{\p}$ for some  $m$ with $2\leq m\leq \p-1$.
\item[$(ii)$] $\gamma_{\p}(\G)\subset \G^{\p^2}$.
\end{itemize}
 Then $\e(\G\wedge \G)\mid \e(\G)$. In particular, $\e(\M)\mid \e(\G)$.
\end{th55}

In \cite{NS2}, the author has shown that proving the conjecture for regular $\p$-groups is equivalent to proving it for groups with exponent $\p$. As a corollary to the next Theorem, we obtain the same.

 \begin{th56}
The following statements are equivalent:
\begin{itemize}
\item[$(i)$] $\e(\G\wedge \G)\mid \e(\G)$ for all regular $\p$-groups $\G$.
\item[$(ii)$] $\e(\G\wedge \G)\mid \e(\G)$ for all groups $\G$ of exponent $\p$.
\end{itemize}
 \end{th56}

In section $6$, we give bounds on the exponent of $\M$ that depend on nilpotency class. G. Ellis in \cite{GE2} showed that if $G$ is a group with nilpotency class $\cc$, then $\e(\M)\mid ( \e(\G))^{\ceil{\frac{\cc}{2}}}$. P. Moravec in \cite{PM1} improved this bound by showing that $\e(\M)\mid ( \e(\G))^{2(\floor{\log_2 \cc})}$. In the next Theorem, we improve the bounds given in \cite{GE2} and \cite{PM1}.

\begin{th61}
Let $\G$ be a finite group with nilpotency class $\cc>1$ and set  $n= \ceil{\log_{3}(\frac{\cc+1}{2})}$. If $\e(\G)$ is odd, then $\e(\G\wedge \G) \mid (\e(\G))^n$. In particular, $\e(\M)\mid (\e(\G))^n$.
\end{th61}

In Theorem 1.1 of \cite{NS2}, N. Sambonet improved all the bounds obtained by various authors by proving that $\e(\M)\mid (\e(\G))^m$, where $m=\floor{\log_{\p-1} \cc}+1$. We improve the bound given in \cite{NS2} by proving, 

\begin{th65}
Let $\p$ be an odd prime and $\G$ be a finite $\p$-group of nilpotency class $\cc\ge \p$. Then $\e(\G\wedge \G)\mid (\e(\G))^n$, where $n = 1+\ceil{\log_{\p-1} (\frac{\cc+1}{\p+1})}$. In particular, $\e(\M)\mid (\e(\G))^n$.
\end{th65}

For a solvable $\p$-group of derived length $\dd$, the author of \cite{NS1} proves that if $\p$ is odd, then  $\e(\M)\mid (\e(\G))^{\dd}$, and if $\p=2$, then $\e(\M)\mid 2^{\dd-1}(\e(\G))^{\dd}$. Using our techniques, we obtain the following generalization of Theorem A of \cite{NS1}, which is one of their main results. 

\begin{th73}
Let $\G$ be a solvable group of derived length $\dd$. 
\begin{itemize}
\item[$(i)$] If $\e(\G)$ is odd, then $\e(\G\otimes \G)\mid (\e(\G))^{\dd}$. In particular, $\e(\M)\mid (\e(\G))^{\dd}$.
\item[$(ii)$] If $\e(\G)$ is even, then $\e(\G\otimes \G)\mid 2^{\dd-1}(\e(\G))^{\dd}$. In particular, $\e(\M)\mid 2^{\dd-1}(\e(\G))^{\dd}$.
\end{itemize}
\end{th73}

\section{Preparatory Results}

R. Brown and J.-L. Loday introduced the nonabelian tensor product $\G\otimes \HH$ for a pair of groups $\G$ and $\HH$ in \cite{BL1} and \cite{BL2} in the context of an application in homotopy theory, extending the ideas of J.H.C. Whitehead in \cite{W}. A special case, the nonabelian tensor square, already appeared in the work of R.K. Dennis in \cite{RKD}. The nonabelian tensor product of groups is defined for a pair of groups that act on each other provided the actions satisfy the compatibility conditions of Definition \ref{D:2.1} below. Note that we write conjugation on the left, so $^gg'=gg'g^{-1}$ for $g,g'\in \G$ and $^gg'g'^{-1}=[g,g']$ for the commutator of $g$ and $g'$. Moreover, our commutators are right normed, i.e $[a,b,c]=[a,[b,c]]$.

\begin{definition}\label{D:2.1}
Let $\G$ and $\HH$ be groups that act on themselves by conjugation and each of which acts on the other. The mutual actions are said to be compatible if
\begin{equation}
^{^h g}h'=\; ^{hgh^{-1}}h' \;\mbox{and}\; ^{^g h}g'=\ ^{ghg^{-1}}g' \;\mbox{for \;all}\; g,g'\in \G \mbox{and}\;h,h'\in \HH.
\end{equation}
\end{definition}

\begin{definition}\label{D:2.2}
Let $\G$ be a group that acts on itself by conjugation, then the nonabelian tensor square $\G\otimes \G$ is the group generated by the symbols $g\otimes h$ for $g,h\in \G$  with relations
\begin{equation}\label{eq:2.2.1}
gg'\otimes h=(^gg'\otimes \;^gh)(g\otimes h),  
\end{equation}
\begin{equation}\label{eq:2.2.2}
g\otimes hh'=(g\otimes h)(^hg\otimes \;^hh'),   
\end{equation}
\noindent for all $g,g',h,h'\in \G $.
\end{definition}

There exists a homomorphism $\kappa : \G\otimes \G \rightarrow \G^{\prime}$ sending $g\otimes h$ to $[g,h]$. Let $\nabla (\G)$ denote the subgroup of $\G\otimes \G$ generated by the elements $x\otimes x$ for $x\in \G$. The exterior square of $\G$ is defined as $\G\wedge \G= (\G\otimes \G)/\nabla (\G)$. We get an induced homomorphism $\G\wedge \G \rightarrow \G^{\prime}$, which we also denote as $\kappa$. We set $\M$ as the kernel of this induced homomorphism, it has been shown in \cite{MC} that $\M\cong \HH_{2}(\G, \mathbb{Z})$, the second homology group of $\G$.

We can find the following results  in \cite{BJR} and Proposition 3 of \cite{VT}.

\begin{prop}\label{P:2.3}
\begin{itemize}
\item[(i)] There are homomorphisms of groups $\lambda : \G \otimes \HH \rightarrow \G,\ \lambda': \G \otimes \HH \rightarrow \HH$ such that $\lambda(g \otimes h) = g^hg^{-1},\ \lambda'(g \otimes h)=\; ^ghh^{-1}$.
\item[(ii)] The crossed module rules hold for $\lambda$ and $\lambda'$, that is,
\begin{align*}
\lambda(^gt) &=\ g(\lambda(t))g^{-1},\\
tt_1t^{-1} &=\ ^{\lambda(t)}t_1,
\end{align*}
for all $t,t_1 \in \G \otimes \HH, g \in \G$ (and similarly for $\lambda'$).
\item[(iii)] $\lambda(t) \otimes h =\ t^ht^{-1},\ g \otimes \lambda'(t) =\  ^gtt^{-1}$, and thus $\lambda(t) \otimes \lambda'(t_1) = [t,t_1]$ for all $t,t_1 \in \G \otimes \HH, g \in \G, h \in \HH$. Hence $\G$ acts trivially on $\Ker \lambda'$ and $\HH$ acts trivially on $\Ker \lambda$.
\end{itemize}
In particular, the following relations hold for $g, g_1 \in \G$ and $h, h_1 \in \HH$ :
\begin{itemize}
\item[(iv)]\begin{equation}\label{eq:2.3.1}
^g(g^{-1} \otimes h) = (g \otimes h)^{-1} =\ ^h(g \otimes h^{-1}). 
\end{equation} 

\item[(v)]\begin{equation}\label{eq:2.3.2}
(g \otimes h)(g_1 \otimes h_1)(g \otimes h)^{-1} = (^{[g,h]}g_1 \otimes \ ^{[g,h]}h_1). 
\end{equation}

\item[(vi)]\begin{equation}\label{eq:2.3.3}
[g,h] \otimes h_1 = (g \otimes h)^{h_1} (g\otimes h)^{-1}.
\end{equation}

\item[(vii)]\begin{equation}\label{eq:2.3.4}
 g_1 \otimes\ [g,h] =\ ^{g_1}(g \otimes h) (g\otimes h)^{-1}. 
\end{equation}

\item[(viii)]\begin{equation}\label{eq:2.3.5}
 [g\otimes h, g_1 \otimes h_1] =  [g,h] \otimes [g_1,h_1].
\end{equation}

\end{itemize}
\end{prop}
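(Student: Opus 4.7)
The plan is to verify each of the eight assertions by direct manipulation of the defining relations (\ref{eq:2.2.1}) and (\ref{eq:2.2.2}), using the compatibility conditions of Definition \ref{D:2.1}. For (i), I would define $\lambda$ and $\lambda'$ on the free group on the symbols $g\otimes h$ by the prescribed formulas and check that the induced maps respect the two tensor relations. Substituting $gg'\otimes h$ into $\lambda$ and expanding, the required identity reduces to a short computation that invokes the compatibility of the mutual actions; the verification for $\lambda'$ is entirely symmetric.

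Next, for (ii) the first crossed module rule $\lambda({}^g t)=g\,\lambda(t)\,g^{-1}$ is immediate on a generator $t=g'\otimes h$ because the $G$-action on $G\otimes H$ is by conjugation via $\lambda$; since $\lambda$ is already a homomorphism, it propagates to products. The Peiffer identity $tt_1 t^{-1}={}^{\lambda(t)}t_1$ is the substantive part: on generators it is precisely relation (v), so the strategy is to prove (v) first and then deduce (ii) from it. To prove (v), I would expand $(g\otimes h)(g_1\otimes h_1)(g\otimes h)^{-1}$ by repeatedly applying (\ref{eq:2.2.1}) and (\ref{eq:2.2.2}) to funnel all factors into a single tensor symbol, using compatibility at each step to simplify nested actions of the form ${}^{{}^h g}(\cdot)$.

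Once (v) is in hand, (iii) is obtained by evaluating $\lambda(t)\otimes h$ on $t=g\otimes h'$: this becomes $[g,h']\otimes h$, which by a single application of (\ref{eq:2.2.1}) gives $({}^h(g\otimes h'))(g\otimes h')^{-1}=t^h t^{-1}$. The dual computation for $g\otimes \lambda'(t)$ yields (vii), and the trivial-action statements follow since $\lambda'(t)=1$ on $\ker\lambda'$ forces ${}^g t=t$. For (iv), set $g'=g^{-1}$ in (\ref{eq:2.2.1}) and $h'=h^{-1}$ in (\ref{eq:2.2.2}); noting that $1\otimes h=1=g\otimes 1$, one reads off both equalities at once. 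Finally, (vi) and (vii) are special cases of (iii), and (viii) follows by writing $[g\otimes h,g_1\otimes h_1]=(g\otimes h)(g_1\otimes h_1)(g\otimes h)^{-1}(g_1\otimes h_1)^{-1}$ and applying (vi) or (vii) to recognize the result as $[g,h]\otimes[g_1,h_1]$.

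The hard part will be establishing (v) cleanly, since it simultaneously encodes the Peiffer identity and is the source of nearly all subsequent identities. The manipulation demands careful bookkeeping of left actions and at least two invocations of compatibility; every other clause of the proposition is essentially a formal consequence of (i), (ii), and (v). In the special case $G=H$ with both actions given by conjugation, the compatibility hypothesis is automatic, so no subtleties arise for the tensor square $G\otimes G$ used throughout the paper.
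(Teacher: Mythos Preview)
The paper does not prove Proposition~\ref{P:2.3}; it simply records it with the sentence ``We can find the following results in \cite{BJR} and Proposition~3 of \cite{VT}.'' So there is no in-paper argument to compare against. Your plan is essentially the standard verification that those references carry out: check that $\lambda,\lambda'$ respect the two defining relations, establish the conjugation identity~(v) directly from~(\ref{eq:2.2.1})--(\ref{eq:2.2.2}) together with compatibility, and then read off (ii), (iii), (vi)--(viii) as formal consequences; (iv) drops out of the relations with $g'=g^{-1}$ or $h'=h^{-1}$.

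One small correction to your sketch: in (ii) you say ``the $G$-action on $G\otimes H$ is by conjugation via $\lambda$'', but the $G$-action is \emph{defined} diagonally by ${}^g(g'\otimes h)= {}^gg'\otimes {}^gh$, not through $\lambda$; the first crossed-module rule $\lambda({}^gt)=g\,\lambda(t)\,g^{-1}$ is then a one-line check on generators using this definition. Also, extending the Peiffer identity from generators to arbitrary $t,t_1$ needs a word: once (v) gives it for $t=g\otimes h$ and arbitrary $t_1$, you use that both sides are multiplicative in $t$ (the left side because conjugation is, the right side because the action is) to pass to products. With those two clarifications your outline is correct and matches what one finds in \cite{BJR}.
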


Let $\N\unlhd \G$ and $[n, n, g]=1$ for all $g\in \G, n\in \N$. In \cite{GE2}, G. Ellis proves that for all integers $t\ge 2$, $n^t\otimes g = (n\otimes g)^t (n\otimes [n, g]^{{{t}\choose{2}}})$. In the next lemma, we will generalize this identity.

\begin{lemma}\label{L:2.4}
Let $\N, \MM \unlhd \G$. If $n\otimes [n, n, m]=1$ and $n\otimes m, n\otimes [n, m]$ commute for all $n\in \N, m\in \MM$, then $n^t\otimes m = (n\otimes [n, m])^{{{t}\choose{2}}} (n\otimes m)^t = (n^{{{t}\choose{2}}} \otimes [n, m]) (n\otimes m)^t$, for every $t\ge 2$.
\end{lemma}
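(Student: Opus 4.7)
The plan is to induct on $t$. For readability set $A = n\otimes m$ and $B = n\otimes [n,m]$; the hypotheses become (a) $A$ and $B$ commute and (b) $n\otimes [n,n,m]=1$. Since $[n,m]=\lambda'(A)$, identity \eqref{eq:2.3.4} gives $n\otimes [n,n,m] = n\otimes \lambda'(B) = {^nB}\cdot B^{-1}$, so (b) is equivalent to ${^nB}=B$, i.e.\ $n$ acts trivially on $B$. This reformulation is what makes the whole argument work without assuming the stronger statement $[n,n,m]=1$ in $\G$ itself, and I expect it to be the main obstacle.

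For the base case $t=2$, one application each of \eqref{eq:2.2.1} and \eqref{eq:2.2.2} gives $n^2\otimes m = (n\otimes [n,m]\,m)\,A = B\cdot {^{[n,m]}}A\cdot A$; the crossed-module rule (Proposition~\ref{P:2.3}(ii)) yields ${^{[n,m]}}A = {^{\lambda(A)}}A = A$, so $n^2\otimes m = BA^2$, as required since $\binom{2}{2}=1$. The heart of the induction is then the auxiliary identity $n^k\otimes [n,m] = B^k$ for every $k\geq 1$, which simultaneously supplies the second equality of the lemma at $k=\binom{t}{2}$. Using \eqref{eq:2.3.4}, this reduces to showing ${^{n^k}}A = B^kA$, which I would prove by induction on $k$ from
\[
{^{n^{k+1}}}A = {^n}(B^kA) = ({^nB})^k\cdot {^nA} = B^k\cdot BA = B^{k+1}A,
\]
invoking ${^nB}=B$ and the base ${^nA}=BA$, itself another instance of \eqref{eq:2.3.4}. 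This is exactly the step where hypothesis (b) is used.

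For the inductive step of the main formula, assume $n^t\otimes m = B^{\binom{t}{2}}A^t$. Applying \eqref{eq:2.2.1} and \eqref{eq:2.2.2},
\[
n^{t+1}\otimes m = (n^t\otimes [n,m]\,m)\,A = (n^t\otimes [n,m])\cdot {^{[n,m]}}(n^t\otimes m)\cdot A.
\]
Since ${^{[n,m]}}={^{\lambda(A)}}$ is conjugation by $A$ in $\G\otimes \G$, and since hypothesis (a) makes $A$ commute with $B^{\binom{t}{2}}A^t$, we get ${^{[n,m]}}(n^t\otimes m) = n^t\otimes m$. Substituting the inductive hypothesis together with the auxiliary identity $n^t\otimes [n,m]=B^t$, and using the binomial relation $t+\binom{t}{2}=\binom{t+1}{2}$, yields $n^{t+1}\otimes m = B^t\cdot B^{\binom{t}{2}}A^t\cdot A = B^{\binom{t+1}{2}}A^{t+1}$, closing the induction. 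Once the auxiliary identity is in place, the rest is a routine combination of \eqref{eq:2.2.1}--\eqref{eq:2.2.2} with Proposition~\ref{P:2.3}(ii).
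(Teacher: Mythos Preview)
Your proof is correct and essentially mirrors the paper's: both induct on $t$, reinterpret hypothesis (b) as ${}^nB=B$ via \eqref{eq:2.3.4}, and combine ${}^nA=BA$ with the commutativity of $A$ and $B$ to close the induction. The only cosmetic difference is that the paper applies ${}^n(\cdot)$ directly to the inductive hypothesis (writing $n^t\otimes m={}^n(n^{t-1}\otimes m)\,A$ and expanding), deferring the identity $n^{k}\otimes[n,m]=B^{k}$ to the final sentence, whereas you isolate that auxiliary identity first and feed it into the inductive step.
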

\begin{proof}
We first prove that for every $t$, $n^t\otimes m = (n\otimes [n, m])^{{{t}\choose{2}}} (n\otimes m)^t$. Note that $n^2\otimes m =\ ^n(n\otimes m) (n\otimes m)$ and using \eqref{eq:2.3.4}, we obtain $^n(n\otimes m) = (n\otimes [n, m]) (n\otimes m)$. Thus $n^2\otimes m = (n\otimes [n, m]) (n\otimes m)^2$, and now we proceed by induction. Let $t > 2$ and assume the statement for $t-1$. We have $n^t\otimes m =\ ^n(n^{t-1}\otimes m) (n\otimes m) = \ ^n(n\otimes [n, m])^{{{t-1}\choose{2}}} \ ^n(n\otimes m)^{t-1} (n\otimes m)$. Since $n\otimes [n, n, m] = 1$, applying \eqref{eq:2.3.4} yields $^n(n\otimes [n, m])=n\otimes [n, m]$. Thus we obtain
\begin{align*}
{n^t}\otimes m&=(n\otimes [n, m])^{{t-1}\choose{2}}((n\otimes [n, m])(n\otimes m))^{t-1}(n\otimes m)
\\&=(n\otimes [n, m])^{{t-1}\choose{2}}(n\otimes [n, m])^{t-1}(n\otimes m)^{t-1}(n\otimes m)\\
&=(n\otimes [n, m])^{{{t}\choose{2}}}(n\otimes m)^t,
\end{align*}
which completes the inductive step. Since $^n(n\otimes [n, m]) = n\otimes [n, m]$, it follows that $n^{{{t}\choose {2}}}\otimes [n, m] = (n\otimes [n, m])^{{{t}\choose{2}}}$, which in turn yields $n^t\otimes m = (n^{{{t}\choose{2}}} \otimes [n, m]) (n\otimes m)^t$.
\end{proof}

\begin{lemma}\label{L:2.5}
Let $\N, \MM$ be normal subgroups of a group $\G$. If $\N$ is abelian, then the following holds:

\begin{itemize}
\item[$(i)$] For all $n_i\in \N$, $m_i \in \MM$, $i = 1, 2$ and for all $t\ge 2$, 
\begin{align*} 
((n_1\otimes m_1)(n_2\otimes m_2))^t &=([n_2, m_2]\otimes [n_1, m_1])^{{t}\choose{2}}(n_1\otimes m_1)^t(n_2\otimes m_2)^t\\ &=([n_2, m_2]^{{{t}\choose{2}}}\otimes [n_1, m_1])(n_1\otimes m_1)^t(n_2\otimes m_2)^t.
\end{align*}
\item[$(ii)$] If $\e(\N)$ is odd, then $\e(\N\otimes \MM)\mid \e(\N)$.
\item[$(iii)$] If $\e(\N)$ is even, then $\e(\N\otimes \MM)\mid 2\e(\N)$.
 \end{itemize}
\end{lemma}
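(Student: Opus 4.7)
The key observation driving everything is that Proposition \ref{P:2.3}(viii), together with the abelianness of $\N$, forces the group $\N \otimes \MM$ to be nilpotent of class at most two. Indeed, $[n_1 \otimes m_1, n_2 \otimes m_2] = [n_1,m_1] \otimes [n_2,m_2]$ has both entries in $\N$ by normality, and applying (viii) once more gives
\[
 \bigl[[n_1,m_1] \otimes [n_2,m_2],\, n_3 \otimes m_3\bigr] = [[n_1,m_1],[n_2,m_2]] \otimes [n_3,m_3] = 1 \otimes [n_3,m_3] = 1
\]
because $\N$ is abelian and $1 \otimes x = 1$. Hence the derived subgroup of $\N \otimes \MM$ is central.

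For part (i), I set $a = n_1 \otimes m_1$ and $b = n_2 \otimes m_2$. Centrality of $[a,b]$ in $\langle a,b\rangle$ yields the standard class-two identity $(ab)^t = a^t b^t [b,a]^{\binom{t}{2}}$. Proposition \ref{P:2.3}(viii) identifies $[b,a] = [n_2,m_2] \otimes [n_1,m_1]$, and centrality allows me to move this factor to the front, giving the first claimed equality. For the second, I apply Lemma \ref{L:2.4} with $n = [n_2,m_2]$ and $m = [n_1,m_1]$: both lie in the abelian $\N$, so $[n,m] = 1$ and the hypotheses of the lemma are trivially met; the conclusion then collapses to $n^s \otimes m = (n \otimes m)^s$ for all $s \ge 2$. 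Setting $s = \binom{t}{2}$ (and handling $\binom{t}{2} \le 1$ by inspection) produces the second expression.

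For parts (ii) and (iii) I combine the class-two Hall--Petrescu identity
\[
 (x_1 \cdots x_k)^s = \prod_i x_i^s \prod_{i<j} [x_j,x_i]^{\binom{s}{2}}
\]
in $\N \otimes \MM$, proved by induction on $k$ from the two-factor case of (i), with two pointwise bounds on generators $x_i = n_i \otimes m_i$. Lemma \ref{L:2.4} applies to a single generator $x = n \otimes m$ (its hypotheses hold because $[n,[n,m]] = 1$: $[n,m] \in \N$ and $\N$ is abelian), yielding $n^s \otimes m = (n^{\binom{s}{2}} \otimes [n,m])(n \otimes m)^s$. Applied to a commutator generator $[n_i,m_i] \otimes [n_j,m_j]$, it again collapses as in (i) to $(n \otimes m)^s = n^s \otimes m$. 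Now take $s = \e(\N)$ in case (ii) and $s = 2\e(\N)$ in case (iii). In both cases $n^s = 1$ for every $n \in \N$, and $\binom{s}{2}$ is divisible by $\e(\N)$: for odd $s$ because $s \mid s(s-1)/2$, and for $s = 2\e(\N)$ because $\binom{s}{2} = \e(\N)(2\e(\N) - 1)$. Hence each $x_i^s = 1$ and each $[x_j,x_i]^{\binom{s}{2}} = 1$, so the Hall--Petrescu formula forces $(x_1 \cdots x_k)^s = 1$, giving the claimed divisibility of $\e(\N \otimes \MM)$.

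The main difficulty I anticipate is careful bookkeeping of when the hypotheses of Lemma \ref{L:2.4} really hold and of the parity-sensitive divisibility of $\binom{s}{2}$ by $\e(\N)$; once the class-two nilpotency of $\N \otimes \MM$ is secured at the outset, the rest is essentially manipulation.
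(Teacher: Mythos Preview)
Your proof is correct and follows essentially the same route as the paper's: both establish that $\N\otimes\MM$ has nilpotency class at most $2$ (the paper cites \cite{DLT}, you derive it directly from Proposition~\ref{P:2.3}(viii)), then apply Lemma~\ref{L:2.4} to single generators and the class-two power formula to products, with the parity-dependent divisibility of $\binom{s}{2}$ by $\e(\N)$ doing the work in (ii) and (iii). Your version is slightly more explicit in passing from two-factor to $k$-factor products via the class-two Hall--Petrescu identity, a step the paper leaves implicit after handling $k=2$.
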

\begin{proof}
Let $\e(\N) =e$.
\begin{itemize}
\item[$(i)$] Since $\N$ is abelian, it follows from  \cite{DLT} that the nilpotency class of $\N\otimes \MM$ is at most $2$. Hence, 
\begin{align*}
((n_1\otimes m_1)(n_2\otimes m_2))^t&= [(n_2\otimes m_2), (n_1\otimes m_1)]^{{t}\choose{2}}(n_1\otimes m_1)^t(n_2\otimes m_2)^t\\
&=([n_2, m_2]\otimes [n_1, m_1])^{{t}\choose{2}}(n_1\otimes m_1)^t(n_2\otimes m_2)^t .
\end{align*}
As $\N$ is abelian $^{[n_2, m_2]} ([n_2, m_2]\otimes [n_1, m_1]) = [n_2, m_2]\otimes [n_1, m_1]$, yielding $[n_2, m_2]^{{{t}\choose{2}}}\otimes [n_1, m_1] = ([n_2, m_2]\otimes [n_1, m_1])^{{{t}\choose{2}}}$. Thus $((n_1\otimes m_1)(n_2\otimes m_2))^{t}=([n_2, m_2]^{{{t}\choose{2}}} \otimes [n_1, m_1])(n_1\otimes m_1)^t(n_2\otimes m_2)^t$.
\item[$(ii)$] Because $\N$ is abelian, we have $[n, n, m] = 1$. Hence $n\otimes [n, n, m] =1$ and $[(n\otimes m), (n\otimes [n, m])] = [n, m]\otimes [n, n, m] = 1$. Thus by Lemma \ref{L:2.4},  we obtain that ${n_1^e}\otimes m_1 =(n_1^{{{e}\choose{2}}} \otimes [n_1, m_1])(n_1\otimes m_1)^e$. Since $e$ is odd, it follows that $e\mid {{e}\choose{2}}$, whence $(n_1\otimes m_1)^e=1$. By $(i)$, we have that $((n_1\otimes m_1)(n_2\otimes m_2))^e =([n_2, m_2]^{{{e}\choose{2}}}\otimes [n_1, m_1])(n_1\otimes m_1)^e(n_2\otimes m_2)^e =1$, which proves $\e(\N\otimes \MM)\mid e$.

\item[$(iii)$]  By Lemma \ref{L:2.4}, we obtain ${n_1^{2e}}\otimes m_1 = (n_1^{{2e}\choose{2}}\otimes [n_1, m_1])(n_1\otimes m_1)^{2e}$. Since $e\mid {{2e}\choose{2}}$, it follows that $(n_1\otimes m_1)^{2e}=1$. By $(i)$ we obtain $((n_1\otimes m_1)(n_2\otimes m_2))^{2e}=([n_2, m_2]^{{{2e}\choose{2}}}\otimes [n_1, m_1])(n_1\otimes m_1)^{2e}(n_2\otimes m_2)^{2e}$, which proves $\e(\N\otimes \MM)\mid 2\e(\N)$.
\end{itemize}
\end{proof}

In \cite{R} and \cite{EL}, the authors give an isomorphism between the nonabelian tensor square of $\G$ and the subgroup $[\G, \G^{\phi}]$ of $\eta(\G)$.  We use this isomorphism in the proof of the next lemma.
\begin{lemma}\label{L:2.6}
Let $\G$ be a nilpotent group of class $\cc$ and set $m = \ceil{\frac{\cc+1}{3}}$. If $\gamma_m(\G)$ has odd exponent, then $\e(\im(\gamma_m(\G) \otimes \G))) \mid \e(\gamma_m(\G))$.
\end{lemma}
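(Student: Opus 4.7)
Set $e := \e(\gamma_m(\G))$, odd by hypothesis, and write $T := \im\bigl(\gamma_m(\G)\otimes\G\bigr)$. The plan is to show $(n \otimes g)^e = 1$ for each generator $n\otimes g$ of $T$ by applying Lemma \ref{L:2.4}, and then to extend the resulting exponent bound to arbitrary elements of $T$ via commutator calculus on the nilpotent group $T$.

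The crucial geometric input is $3m \ge \cc+1$, which gives $\gamma_{3m}(\G)=1$ and hence the depth identities $[\gamma_m(\G),\gamma_{2m+1}(\G)] = 1$ and $[\gamma_{m+1}(\G),\gamma_{2m+1}(\G)] = 1$. Using (2.3.3)--(2.3.5) and these depth identities, I would verify the two hypotheses of Lemma \ref{L:2.4} with $\N=\gamma_m(\G)$ and $\MM=\G$: namely, $n\otimes[n,n,g]=1$, and $n\otimes g$ commutes with $n\otimes[n,g]$. The second condition, by (2.3.5), amounts to $[n,g]\otimes[n,n,g]=1$; both assertions reduce, via (2.3.4), to showing that the outer $\G$-action of elements in $\gamma_{2m+1}(\G)$ is trivial on the relevant tensors, which is precisely what the depth identities supply. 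Once verified, Lemma \ref{L:2.4} yields $n^e\otimes g = (n^{\binom{e}{2}}\otimes[n,g])(n\otimes g)^e$; since $n^e=1$ and, as $e$ is odd, $e\mid\binom{e}{2}$, both the left-hand side and the first factor on the right are trivial, forcing $(n\otimes g)^e = 1$.

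To extend the bound to all of $T$, one uses (2.3.5) repeatedly: iterated commutators of generators land in $\im(\gamma_{3m+3}(\G)\otimes\G) = 1$, so $T$ has nilpotency class at most $3$. A Hall--Petresco expansion inside the class-$3$ group $T$ then gives
\[
(x_1\cdots x_r)^e \;=\; \prod_i x_i^e \cdot \prod_j c_j^{\binom{e}{d_j}}
\]
with $c_j\in\gamma_{d_j}(T)$ for $d_j \in \{2,3\}$. Each $c_j$ lies in a deeper image of the form $\im(\gamma_{m'}(\G)\otimes\gamma_{m''}(\G))$ with $m',m'' \ge m+1$, and since $e\mid\binom{e}{2}$, iterating the argument at greater depth causes every correction factor to vanish, yielding $\e(T)\mid e$.

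The main obstacle I anticipate is verifying the first hypothesis of Lemma \ref{L:2.4}, that $n\otimes[n,n,g]=1$. Since $[n,n,g]\in\gamma_{2m+1}(\G)$ need not itself be trivial, this is a genuine tensor-square identity. A naive expansion via (2.3.4) collapses into a tautology, so the argument must exploit (2.2.2) to split $n\otimes\bigl({}^n[n,g]\bigr)$ into factors in which the inner-conjugation corrections coming from $[n,n,g]$ are trivial by the depth identity $[\gamma_m,\gamma_{2m+1}]=1$, and then combine this with the centrality of $n\otimes[n,n,g]$ in $T$ to close the loop. Handling the cubic Hall--Petresco correction in the extension step---especially when $\p=3$, where $e\nmid\binom{e}{3}$---will require the deeper depth structure of the commutator subgroup $\gamma_3(T)$.
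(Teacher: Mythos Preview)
Your overall plan mirrors the paper's---apply Lemma~\ref{L:2.4} to kill $(n\otimes g)^e$, then control products via a nilpotency bound on the image---but the mechanism you propose for the two key vanishing statements does not work, and this is a genuine gap.

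The depth identities $[\gamma_m(\G),\gamma_{2m+1}(\G)]=1$ live in $\G$, and knowing $[a,b]=1$ in $\G$ does \emph{not} force $a\otimes b=1$ in $\G\otimes\G$ (already false for abelian groups). So you cannot conclude $n\otimes[n,n,g]=1$ or $[n,g]\otimes[n,n,g]=1$ from these identities; as you yourself note, unwinding via \eqref{eq:2.3.4} or \eqref{eq:2.2.2} is tautological. The same problem undermines your nilpotency-class argument for $T$: the triple commutator $[[n_1,g_1],[n_2,g_2]]\otimes[n_3,g_3]$ lies in $\im(\gamma_{2m+2}(\G)\otimes\G)$, not in $\im(\gamma_{3m+3}(\G)\otimes\G)$---weights on the two sides of $\otimes$ do not add inside $\G\otimes\G$.

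The paper supplies the missing tool: the isomorphism $\psi\colon\G\otimes\G\to[\G,\G^{\phi}]\subset\eta(\G)$ of \cite{R,EL}, where $\eta(\G)$ is nilpotent of class at most $\cc+1$. Under $\psi$, the element $n\otimes[n,n,g]$ becomes a commutator in $\gamma_{3m+1}(\eta(\G))$, which vanishes since $3m+1\ge\cc+2$; likewise $[n,g]\otimes[n,n,g]\in\gamma_{3m+2}(\eta(\G))=1$. This is exactly what lets Lemma~\ref{L:2.4} apply. The same weight count in $\eta(\G)$ shows $\gamma_3(\langle n_1\otimes g_1,\,n_2\otimes g_2\rangle)\subset\gamma_{3m+3}(\eta(\G))=1$, so any two generators of $T$ generate a subgroup of class at most~$2$ (not~$3$), and the product formula reduces to the single $\binom{e}{2}$ correction---avoiding the $p=3$, $\binom{e}{3}$ difficulty you flagged.
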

\begin{proof} Let $\e(\gamma_m(\G)) = e$.
Consider the isomorphism $\psi : \G\otimes \G \rightarrow [\G,\G^{\phi}]$ defined by $\psi(g\otimes h)= [g,h^{\phi}]$, where $\G^{\phi}$ is an isomorphic copy of $\G$. Recall that
$[\G,\G^{\phi}]$ is a subgroup of $\eta(\G)$. By Theorem A of \cite{R}, $\eta(\G)$ is a group of nilpotency class at most $\cc+1$. Note that for $n\in \gamma_m(\G), g\in \G$, $\psi(n\otimes [n, n, g])\in \gamma_{3m+1}(\eta(\G))$. Since $3m+1\ge \cc+2$, $\gamma_{3m+1}(\eta(\G)) \subset \gamma_{\cc+2}(\eta(\G))=1$. Thus $n\otimes [n, n, g] = 1$. Similarly $\psi([n, g]\otimes [n, n, g])\subset \gamma_{3m+2}(\eta(\G))$, giving $[(n\otimes g), (n\otimes [n, g])]=[n, g]\otimes [n, n, g] = 1$. Thus by Lemma \ref{L:2.4}, we obtain $n^e\otimes g = (n^{{{e}\choose{2}}} \otimes [n, g]) (n\otimes g)^e$. Since $e$ is odd, it follows that $e\mid {{e}\choose{2}}$, whence $(n\otimes g)^e=1$. Moreover for $n_1,n_2 \in \gamma_m(\G)$ and $g_1,g_2 \in \G$, we have $\psi( \gamma_3(\langle n_1\otimes g_1,n_2\otimes g_2\rangle))\subset \gamma_{3m+3}(\eta(\G)) =1$. But $3m+3\ge \cc+2$, so $\gamma_{3m+3}(\eta(\G))=1$. Hence $\langle n_1\otimes g_1,n_2\otimes g_2\rangle$ is a group of class at most 2. Thus $((n_1\otimes g_1)(n_2\otimes g_2))^e =([n_2, g_2]\otimes [n_1, g_1])^{{e}\choose{2}}(n_1\otimes g_1)^e(n_2\otimes g_2)^e =1$, proving the result.
\end{proof}

In the next lemma, we collect some commutator identities.

\begin{lemma}\label{L:2.7}
\begin{itemize}
\item[$(i)$] Let $\G$ be a group and $g_i, h_i\in \G$ for $i=1, 2$. Then
\begin{equation} \label{eq:2.7.1}
[gg_1, h] =\ ^g[g_1, h] [g, h] = [g, g_1, h] [g_1, h] [g, h],
\end{equation}
\begin{equation} \label{eq:2.7.2}
[g, hh_1] = [g, h]\ ^h[g, h_1] = [g, h] [h, g, h_1] [g, h_1].
\end{equation}
\item[$(ii)$] Let $\G$ be a group of nilpotency class $\cc$, $x\in \gamma_i(\G)$, $y\in \gamma_j(\G)$ and $z\in \gamma_k(\G)$. If $i+j+k \ge \cc+1$, then
\begin{equation} \label{eq:2.7.3} 
^x[y, z] = [y, z].
\end{equation}
\item[$(iii)$] Let $\G$ be a group of nilpotency class $\cc$, $x\in \gamma_i(\G)$, $y\in \gamma_j(\G)$, $z\in \gamma_k(\G)$ and $u\in \gamma_l(\G)$. If $i+j+k+l \ge \cc+1$, then
\begin{equation} \label{eq:2.7.4}
[x, y] [z, u] = [z, u] [x, y]
\end{equation}
\end{itemize}
\end{lemma}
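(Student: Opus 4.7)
The plan is to treat all three parts by direct commutator calculation, using only the convention $[a,b]=aba^{-1}b^{-1}$, the derived identity $^xy=[x,y]\,y$, the right-normed notation $[a,b,c]=[a,[b,c]]$, and the standard inclusion $[\gamma_i(\G),\gamma_j(\G)]\subseteq \gamma_{i+j}(\G)$. No auxiliary machinery is needed.

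For part $(i)$, I would first verify $[gg_1,h]={}^g[g_1,h]\,[g,h]$ by simply expanding both sides: the left-hand side equals $gg_1 h g_1^{-1}g^{-1}h^{-1}$, while the right-hand side expands to $g(g_1 h g_1^{-1}h^{-1})g^{-1}\cdot ghg^{-1}h^{-1}$, and after cancelling $g^{-1}g$ and $h^{-1}h$ the two expressions coincide. For the second equality, rewrite the conjugate via $^gy=[g,y]\,y$ with $y=[g_1,h]$; this gives ${}^g[g_1,h]=[g,[g_1,h]]\,[g_1,h]=[g,g_1,h]\,[g_1,h]$, which inserted above yields $[g,g_1,h]\,[g_1,h]\,[g,h]$. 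The identities for $[g,hh_1]$ are obtained symmetrically: expand both sides directly to verify $[g,hh_1]=[g,h]\,{}^h[g,h_1]$, then apply $^h y=[h,y]\,y$ with $y=[g,h_1]$ to turn the conjugate into $[h,g,h_1]\,[g,h_1]$.

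For part $(ii)$, start from the identity ${}^x[y,z]=[x,[y,z]]\,[y,z]$. Since $[y,z]\in\gamma_{j+k}(\G)$, the commutator $[x,[y,z]]$ lies in $\gamma_{i+j+k}(\G)$. By the hypothesis $i+j+k\ge \cc+1$ and nilpotency of class $\cc$, this subgroup is trivial, giving ${}^x[y,z]=[y,z]$. For part $(iii)$, consider $[[x,y],[z,u]]$. As $[x,y]\in\gamma_{i+j}(\G)$ and $[z,u]\in\gamma_{k+l}(\G)$, we have $[[x,y],[z,u]]\in\gamma_{i+j+k+l}(\G)\subseteq\gamma_{\cc+1}(\G)=1$, so $[x,y]$ and $[z,u]$ commute.

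There is no real obstacle in this lemma; all three assertions reduce to a line or two of commutator bookkeeping. The only point requiring some care is keeping the right-normed convention $[a,b,c]=[a,[b,c]]$ consistent when translating between the conjugation form $^xy=[x,y]\,y$ and the nested commutator notation that appears in the statement of $(i)$.
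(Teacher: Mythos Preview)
Your proof is correct and follows essentially the same route as the paper: the paper simply remarks that the identities in $(i)$ are standard, and that $(ii)$ and $(iii)$ follow from $^ab=[a,b]\,b$ and $ab=[a,b]\,ba$ respectively, which is exactly the mechanism you spell out. Your write-up is in fact more detailed than the paper's one-line sketch.
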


\begin{proof}
Identities in $(i)$ can be found in any standard book on group theory and $(ii)$, $(iii)$ follows from the identities $^ab = [a, b]b$, $ab = [a, b] ba$ respectively.
\end{proof}

The following lemma might be standard, we include it here for the sake of completeness.

\begin{lemma}\label{L:2.8}
Let $\G$ be a group of nilpotency class $\cc >1$, $r$ be a positive integer and $[ ,\dots, ]$ be a commutator of weight $r+1$. If $a\in \gamma_{n_1}(\G)$, $b\in \gamma_{n_2}(\G)$ and $g_i \in \gamma_{m_i} (\G)$ for $1\le i\le r$, satisfies $n_1 + n_2 +m_1 +\cdots +m_r\ge \cc+1$, then the following holds:
\begin{itemize}
\item[$(i)$] $[ab, g_r, \dots, g_1] = [a, g_r, \dots, g_1] [b, g_r, \dots, g_1]$.
\item[$(ii)$] $[g_r, \dots, g_1, ab] = [g_r, \dots, g_1, a] [g_r, \dots, g_1, b]$.
\item[$(iii)$] $[g_r, \dots g_{i+1}, ab, g_i, \dots, g_1] = [g_r, \dots, g_{i+1}, a, g_i, \dots, g_1] [g_r, \dots, g_{i+1}, b, g_i, \dots, g_1]$ for $1\le i < r$.
\end{itemize} 
\end{lemma}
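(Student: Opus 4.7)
The plan is to expand each commutator containing $ab$ via the Leibniz-type identities \eqref{eq:2.7.1} and \eqref{eq:2.7.2} and to show that every resulting error term lands in $\gamma_{c+1}(G)=1$, using the weight hypothesis $n_1+n_2+m_1+\cdots+m_r\ge c+1$ together with the commutativity identity \eqref{eq:2.7.4}.

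For (i) I would proceed directly. Set $h=[g_r,\dots,g_1]\in \gamma_{m_1+\cdots+m_r}(G)$. Applying \eqref{eq:2.7.1} with $g=a$ and $g_1=b$ gives $[ab,h]=[a,b,h]\,[b,h]\,[a,h]$. The first factor equals $[a,[b,h]]\in \gamma_{n_1+n_2+m_1+\cdots+m_r}(G)\subseteq \gamma_{c+1}(G)=1$, so it vanishes. The remaining two factors lie in $\gamma_{n_1+m_1+\cdots+m_r}(G)$ and $\gamma_{n_2+m_1+\cdots+m_r}(G)$ and their weights sum to $n_1+n_2+2(m_1+\cdots+m_r)\ge (c+1)+(m_1+\cdots+m_r)\ge c+2$, so \eqref{eq:2.7.4} lets them commute, yielding $[ab,h]=[a,h][b,h]$.

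For (ii) I would induct on $r$, but on the strengthened claim
\[
[g_r,\dots,g_1,ab]=[g_r,\dots,g_1,a]\,[g_r,\dots,g_1,b]\cdot E_r,\quad E_r\in \gamma_{n_1+n_2+m_1+\cdots+m_r}(G).
\]
The base $r=1$ follows from \eqref{eq:2.7.2}: $[g_1,ab]=[g_1,a]\,[a,[g_1,b]]\,[g_1,b]$, and conjugating the middle factor past $[g_1,b]$ produces $E_1\in\gamma_{n_1+n_2+m_1}(G)$. For the step, write $D_{r-1}=A_{r-1}B_{r-1}E_{r-1}$ by the inductive hypothesis and apply \eqref{eq:2.7.2} twice to $[g_r,A_{r-1}B_{r-1}E_{r-1}]$. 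One obtains $A_rB_r$ together with three cross-terms, namely $[A_{r-1},[g_r,B_{r-1}E_{r-1}]]$, $[B_{r-1},[g_r,E_{r-1}]]$, and $[g_r,E_{r-1}]$; a direct weight count places each in $\gamma_{n_1+n_2+m_1+\cdots+m_r}(G)$. Collecting them (after conjugating past $B_r$) defines $E_r$ with the required weight, closing the induction. Under the hypothesis this forces $E_r=1$.

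For (iii), first apply \eqref{eq:2.7.1} to the innermost bracket with $h'=[g_i,\dots,g_1]$: $[ab,h']=[a,b,h']\,[b,h']\,[a,h']$, and $[a,b,h']\in\gamma_{n_1+n_2+m_1+\cdots+m_i}(G)$. Then propagate outward by iterating \eqref{eq:2.7.2} for $g_{i+1},g_{i+2},\dots,g_r$, using a three-factor analogue of the strengthened claim of (ii). Every cross-term produced has weight at least $n_1+n_2+m_1+\cdots+m_r$ and therefore dies; the outer iterates of $[a,b,h']$ likewise lie in $\gamma_{n_1+n_2+m_1+\cdots+m_r}(G)=1$; the two surviving terms are $[g_r,\dots,g_{i+1},a,g_i,\dots,g_1]$ and $[g_r,\dots,g_{i+1},b,g_i,\dots,g_1]$, which commute by \eqref{eq:2.7.4} since their weights sum to at least $c+2$. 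The principal difficulty is bookkeeping in (ii) and (iii): one must check that \emph{every} cross-term created by iterated use of \eqref{eq:2.7.2} lies in $\gamma_{n_1+n_2+m_1+\cdots+m_r}(G)$, which is precisely why the induction must be carried out on the strengthened claim with an explicit error subgroup rather than on the literal equality.
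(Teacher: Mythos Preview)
Your argument has a genuine gap: you have implicitly fixed one bracketing, but the lemma is stated for an \emph{arbitrary} commutator word of weight $r+1$ (note the phrasing ``let $[\,,\dots,\,]$ be a commutator of weight $r+1$''), and this generality is essential for Corollary~\ref{cr:2.9} and for the applications in Section~4. In part~(i) you set $h=[g_r,\dots,g_1]$ and reduce to $[ab,h]$, which presupposes that the commutator is of the shape $[ab,[g_r,\dots,g_1]]$. But the shape $[[ab,g_r],g_{r-1},\dots,g_1]$ (or any other bracketing with $ab$ leftmost) is not of the form $[ab,h]$ for any single $h$, so your computation does not apply. The same issue recurs in (ii) and (iii): your induction writes $D_r=[g_r,D_{r-1}]$, i.e.\ it only treats the right-normed case where the outermost bracket separates $g_r$ from the rest.

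The paper's proof handles this by a structural induction on the bracketing. For (i), if the outermost decomposition is $[[ab,g_r],\,\cdot\,]$ one first expands $[ab,g_r]$ via \eqref{eq:2.7.1} into three factors and then uses the inductive hypothesis to distribute the outer commutator over that product; if instead the leftmost block $[ab,g_r,\dots,g_j]$ already has smaller weight as a subcommutator, one invokes induction directly on it. Part (iii) similarly bifurcates on whether a left bracket or a right bracket is adjacent to $ab$. Your weight-counting for the error terms is correct and is exactly what drives the argument, but it must be wrapped in an induction on the commutator \emph{shape}, not just on $r$ with a fixed shape. For the right-normed case your proof of (i) is in fact cleaner than the paper's (a single application of \eqref{eq:2.7.1} suffices), but it does not, as written, establish the lemma.
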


\begin{proof}We prove $(i)$, $(ii)$ and $(iii)$ by induction on $r$.
\begin{itemize}
\item[$(i)$] Let $r = 1$, then \eqref{eq:2.7.1} yields $[ab, g_1] = [a, b, g_1] [b, g_1] [a, g_1] = [a, g_1] [b, g_1]$. Let $r >1$, and $[ab, g_r, \dots, g_1]$ be of the form $[[ab, g_r], g_{r-1}, \dots, g_1]$. Then applying \eqref{eq:2.7.1} to $[ab, g_r]$ in $[[ab, g_r], g_{r-1}, \dots, g_1]$, and using induction hypothesis we obtain 
\begin{align*} &[[ab, g_r], g_{r-1}, \dots, g_1]=[[a, b, g_r] [b, g_r] [a, g_r], g_{r-1}, \dots, g_1] \\
&= [[a, b, g_r], g_{r-1}, \dots, g_1]\ [[b, g_r], g_{r-1}, \dots, g_1]\ [[a, g_r], g_{r-1}, \dots, g_1]. 
\end{align*}
The first term vanishes and the remaining terms commute by \eqref{eq:2.7.4}, yielding the required result. In case $g_r$ is not next to a right bracket, then observe that there exist a $j\in \{1, \dots, r-1\}$ such that $[ab, g_r, \dots, g_j]$ in $[[ab, g_r, \dots, g_j], \dots g_1]$ can be written as $[ab, h_t, \dots, h_1]$, $h_k\in \G$, $t < r-(j-1)$, giving the result we seek by induction hypothesis. 
\item[$(ii)$] Now using \eqref{eq:2.7.2}, the proof follows mutatis mutandis the proof of $(i)$.
\item[$(iii)$] Note that $[g_2, ab, g_1]$ can be bracketed only in 2 ways, either $[g_2, [ab, g_1]]$ or $[[g_2, ab], g_1]$. Expanding $[ab, g_1]$ in $[g_2, [ab, g_1]]$ by \eqref{eq:2.7.1}, then using $(i)$ we obtain
\begin{align*} &[g_2, [ab, g_1]] = [g_2, [a, b, g_1] [b, g_1] [a, g_1]] \\
& = [g_2, [a, b, g_1]] [g_2, [b, g_1]] [g_2, [a, g_1]].
\end{align*}
The first term vanishes and $[g_2, [b, g_1]], [g_2, [a, g_1]]$ commute by \eqref{eq:2.7.4} yielding $[g_2, [ab, g_1]] = [g_2, [a, g_1]] [g_2, [b, g_1]]$. Similarly we get 
\begin{align*} &[[g_2, ab], g_1] = [[g_2, a] [a, g_2, b] [g_2, b], g_1]\ (\text{by}\ \eqref{eq:2.7.2}) \\
& = [[g_2, a], g_1] [[a, g_2, b], g_1] [[g_2, b], g_1]\ (\text{by}\ (ii)).
\end{align*} 
 The second term vanishes yielding $[[g_2, ab], g_1] = [[g_2, a], g_1] [[g_2, b], g_1]$. Let $r>2$, we proceed by considering the following cases:
 \begin{itemize}
\item[$(a)$] Let a left bracket be next to $ab$, and $[g_r, \dots g_{i+1}, ab, g_i, \dots, g_1]$ be of the form $[g_r, \dots, g_{i+1}, [ab, g_i], \dots, g_1]$. Then expanding $[ab, g_i]$ in $[g_r, \dots, g_{i+1}, [ab, g_i], \dots, g_1]$ by \eqref{eq:2.7.1}, we obtain $[g_r, \dots g_{i+1}, [ab, g_i], \dots, g_1] = [g_r, \dots g_{i+1}, [a, b, g_i] [b, g_i] [a, g_i], \dots, g_1]$. If $i>1$, then induction hypothesis yields
 \begin{align*} &[g_r, \dots g_{i+1}, [a, b, g_i] [b, g_i] [a, g_i], \dots, g_1] = [g_r, \dots g_{i+1}, [a, b, g_i], \dots, g_1] \\& [g_r, \dots g_{i+1}, [b, g_i], \dots, g_1] [g_r, \dots g_{i+1}, [a, g_i], \dots, g_1].
 \end{align*}
 Moreover if $i=1$, we obtain the same expression using $(ii)$.
Now the first term vanishes, and the other terms commute by \eqref{eq:2.7.4} giving the result. Suppose $[g_r, \dots g_{i+1}, ab, g_i, \dots, g_1]$ is not of the form $[g_r, \dots, g_{i+1}, [ab, g_i], \dots, g_1]$, then there exist $j\in \{1, \dots i-1\}$ such that $[ab, g_i, \dots, g_j]$ in $ [g_r, \dots, g_{i+1}, [ab, g_i, \dots, g_j], \dots g_1]$ can be written as $[ab, h_t, \dots, h_1]$, $h_k\in \G$, $t < i-(j-1)$, and the result follows by induction hypothesis.
\item[$(b)$] Let a right bracket be next to $ab$, and $[g_r, \dots g_{i+1}, ab, g_i, \dots, g_1]$ be of the form $[g_r, \dots, [g_{i+1}, ab], g_i, \dots, g_1]$. Then expanding $[g_{i+1}, ab]$ in $[g_r, \dots, [g_{i+1}, ab], g_i, \dots, g_1]$ by \eqref{eq:2.7.2}, we have $[g_r, \dots, [g_{i+1}, ab], g_i, \dots, g_1] = [g_r, \dots, [g_{i+1}, a] [a, g_{i+1}, b] [g_{i+1}, b], g_i, \dots, g_1]$.  If $i = r-1$, then using $(i)$ and if $i<r-1$, then by induction hypothesis, we obtain
 \begin{align*} &[g_r, \dots, [g_{i+1}, a] [a, g_{i+1}, b] [g_{i+1}, b], g_i, \dots, g_1] = [g_r, \dots, [g_{i+1}, a], g_i, \dots, g_1]\\& [g_r, \dots, [a, g_{i+1}, b], g_i, \dots, g_1]\ [g_r, \dots, [g_{i+1}, b], g_i, \dots, g_1].
 \end{align*}
The second term vanishes giving us the required result. Otherwise there exist $j\in \{i+2, \dots, r\}$ such that $[g_j,\dots g_{i+1}, ab]$ in $[g_r, \dots, [g_j,\dots g_{i+1}, ab], g_i, \dots, g_1]$ can be written as $[h_t, \dots, h_1, ab]$, $h_k\in \G$, $t <j- i$, which yields the result by induction hypothesis.
\end{itemize}
\end{itemize}
\end{proof}

\begin{corollary}\label{cr:2.9}
Let $\G$ be a group of nilpotency class $\cc >1$, and $[ , \dots, ]$ be a commutator of weight $\cc$. Then the map $[ , \dots, ]\colon \underbrace{\G \times \G \times \cdots \times \G}_{\cc \, times}$ $\longrightarrow \gamma_{\cc} (\G)$ given by $(g_c, \dots, g_1)\mapsto [g_c, \dots, g_1]$ is multiplicative in each coordinate.
\end{corollary}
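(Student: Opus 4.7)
The plan is to deduce this corollary directly from Lemma \ref{L:2.8} by specializing all the weight indices to $1$. Fix a bracket pattern of weight $c$ and an index $i$ with $1 \leq i \leq c$. To prove multiplicativity in the $i$-th slot, I hold the remaining $c-1$ entries $g_j \in G = \gamma_1(G)$ fixed and replace the entry in position $i$ by a product $ab$ with $a, b \in G = \gamma_1(G)$. In the notation of Lemma \ref{L:2.8}, this sets $n_1 = n_2 = 1$ and $m_1 = \cdots = m_{c-1} = 1$, so $r = c-1$ and the hypothesis reads
\[
n_1 + n_2 + m_1 + \cdots + m_r = 1 + 1 + (c-1) = c+1 \geq c+1,
\]
which is satisfied.

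With this hypothesis in force, I would then invoke the appropriate part of Lemma \ref{L:2.8} according to where position $i$ sits inside the chosen bracketing: part $(i)$ when the varied coordinate is the outermost entry (so the commutator looks like $[ab, g_{c-1}, \ldots, g_1]$ in the notation of the lemma), part $(ii)$ when it is the innermost entry, and part $(iii)$ in every intermediate case. In each situation the conclusion of the lemma is precisely
\[
[\ldots, ab, \ldots] \;=\; [\ldots, a, \ldots]\,[\ldots, b, \ldots],
\]
which is the required multiplicativity in the $i$-th coordinate. Since $i$ and the bracket pattern were arbitrary, multiplicativity holds in every coordinate.

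There is no real obstacle: all the combinatorial work involving arbitrary bracketings, shuffling of commutators, and vanishing of higher-weight terms has already been absorbed into Lemma \ref{L:2.8}, and the only remaining step is to check that the weight condition is met with equality when every entry lies in $\gamma_1(G)$. Thus the proof essentially reduces to a one-line appeal to Lemma \ref{L:2.8}.
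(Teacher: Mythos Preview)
Your proposal is correct and is exactly the intended argument: the paper states Corollary \ref{cr:2.9} immediately after Lemma \ref{L:2.8} with no separate proof, so it is meant to follow by specializing $n_1=n_2=m_1=\cdots=m_{c-1}=1$ and invoking the appropriate part (i), (ii), or (iii) of Lemma \ref{L:2.8} depending on the position of the varied coordinate.
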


\begin{lemma}\label{L:2.10}
 Let $\G$ be a nilpotent group of class $\cc$ and $a\in \gamma_i(\G)$, $b\in \gamma_j(\G)$.
 \begin{itemize}
 \item[$(i)$] If $2i+3j\ge \cc+1$, then $[b^n, a] = \prod_{t = n}^{2} [_tb, a]^{{{n}\choose{t}}} [b, a]^n$ for all $n\in \mathbb{N}$. Moreover, if\; $i+rj\ge \cc+1$ and $2i+3j\ge \cc+1$, then $[b^n, a] = \prod_{t = r-1}^{2} [_tb, a]^{{{n}\choose{t}}} [b, a]^n$ for all $n\in \mathbb{N}$.
 \item[$(ii)$] If $3i+2j\ge \cc+1$, then $[b, a^n] = \prod_{t = n-1}^{1} [_ta, b, a]^{{{n}\choose{t+1}}} [b, a]^n$ for all $n\in \mathbb{N}$. Moreover, if $ri+j\ge \cc+1$ and $3i+2j\ge \cc+1$, then $[b, a^n] = \prod_{t = r-2}^{1} [_ta, b, a]^{{{n}\choose{t+1}}} [b, a]^n$ for all $n\in \mathbb{N}$.
 \item[$(iii)$] If $i+3j\ge \cc+1$ and the order of $b$ is $n$ modulo the center, then the order of $[b, a]$ is at most $n$ if $n$ is odd, and at most $2n$ if $n$ is even.
 \item[$(iv)$] If $3i+j\ge \cc+1$ and the order of $a$ is $n$ modulo the center, then the order of $[b, a]$ is at most $n$ if $n$ is odd, and at most $2n$ if $n$ is even.
 \end{itemize}
 \end{lemma}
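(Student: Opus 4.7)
My plan is to prove (i) by induction on $n$, mirror the argument for (ii), and deduce (iii) and (iv) by short iterations. For (i), the crucial preliminary is that under $2i+3j\ge\cc+1$, every pair of factors on the right-hand side of the target formula commutes: since $[_sb,a]\in\gamma_{i+sj}(\G)$ and $[b,a]\in\gamma_{i+j}(\G)$, for $s,t\ge 2$ the commutator $[[_sb,a],[_tb,a]]$ has weight at least $2i+4j$ and $[[_tb,a],[b,a]]$ has weight at least $2i+3j$, both $\ge\cc+1$, hence these vanish by \eqref{eq:2.7.4}. Starting from $[b^n,a]={}^b[b^{n-1},a]\cdot[b,a]$ (a consequence of \eqref{eq:2.7.1}), I would apply the inductive hypothesis to $[b^{n-1},a]$, distribute ${}^b$ through the resulting product (legitimate by commutativity), use ${}^b[_tb,a]=[b,[_tb,a]]\,[_tb,a]=[_{t+1}b,a]\,[_tb,a]$, shift the index in one of the products, and collect terms via Pascal's identity $\binom{n-1}{t-1}+\binom{n-1}{t}=\binom{n}{t}$ to obtain the claimed formula. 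The \emph{moreover} clause is immediate because $i+rj\ge\cc+1$ forces $[_tb,a]\in\gamma_{\cc+1}(\G)=1$ whenever $t\ge r$.

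Part (ii) is proved by the symmetric induction based on $[b,a^n]=[b,a]\cdot{}^a[b,a^{n-1}]$ from \eqref{eq:2.7.2}, with the commutativity of the factors following from $3i+2j\ge\cc+1$ by the same weight count, and with the analogous identity ${}^a[_ta,b,a]=[_{t+1}a,b,a]\,[_ta,b,a]$ holding modulo commutators whose weight exceeds $\cc$ and therefore vanish.

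For (iii), the hypothesis $i+3j\ge\cc+1$ automatically yields $2i+3j\ge\cc+1$ and permits the \emph{moreover} clause of (i) with $r=3$, producing
\[
1=[b^n,a]=[_2b,a]^{\binom{n}{2}}[b,a]^n,
\]
as $b^n\in\Z$. Hence $[b,a]^n=[_2b,a]^{-\binom{n}{2}}$. Iterating, I apply (i) with $a$ replaced by $[b,a]\in\gamma_{i+j}(\G)$: the \emph{moreover} hypothesis with the new value $r=2$ reads $(i+j)+2j=i+3j\ge\cc+1$ and hence holds, so (using $[_tb,[b,a]]=[_{t+1}b,a]$) the product collapses and we obtain $1=[b^n,[b,a]]=[_2b,a]^n$. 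Therefore the order of $[_2b,a]$ divides $n$: if $n$ is odd then $n\mid\binom{n}{2}$, so $[b,a]^n=1$, and if $n$ is even then $[b,a]^{2n}=[_2b,a]^{-n(n-1)}=1$. Part (iv) is entirely symmetric: one applies (ii) in place of (i), uses $[[b,a],a^n]=1$ together with $[[b,a],a]=[a,b,a]^{-1}$ to conclude $[a,b,a]^n=1$, and runs the same parity analysis.

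The main technical obstacle I anticipate is the careful index shifting and Pascal-identity bookkeeping in the inductive step of (i), together with the verification that each weight inequality invoked for commutativity is actually guaranteed by the running hypothesis. Part (ii) additionally requires justifying the analogue ${}^a[_ta,b,a]=[_{t+1}a,b,a]\,[_ta,b,a]$; once the formulas in (i) and (ii) are in hand, (iii) and (iv) follow cleanly from the short iterations described above.
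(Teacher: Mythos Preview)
Your proposal is correct and follows essentially the same route as the paper: parts (i) and (ii) are proved by induction on $n$ using the identities \eqref{eq:2.7.1}--\eqref{eq:2.7.4} of Lemma~\ref{L:2.7}, exactly as the paper indicates. For (iii) (and symmetrically (iv)) there is a minor cosmetic difference: the paper sets $m=n$ or $m=2n$ according to parity, obtains $1=[b^m,a]=[_2b,a]^{\binom{m}{2}}[b,a]^m$ from (i), and then invokes Lemma~\ref{L:2.8} (multilinearity at top weight, which is available since $i+3j\ge\cc+1$) to write $[_2b,a]^{\binom{m}{2}}=[b^{\binom{m}{2}},b,a]=1$; you instead re-apply (i) with $a$ replaced by $[b,a]$ and $r=2$ to get $[_2b,a]^n=1$ and then argue by parity. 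Both are the same linearity observation packaged slightly differently, and both are valid. Two small remarks: conjugation is a homomorphism, so ``distribute $^b$ through the product'' needs no commutativity---the commutativity is only needed afterwards to collect terms; and your identity ${}^a[_ta,b,a]=[_{t+1}a,b,a]\,[_ta,b,a]$ in fact holds on the nose (no higher-weight correction terms), so the ``modulo commutators'' caveat is unnecessary.
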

 \begin{proof}
 The first two parts can be proved using the commutator identities in Lemma \ref{L:2.7} and by induction on $n$. To prove $(iii)$ we set $m = n$ in case $n$ is odd and $m=2n$ in case $n$ is even. Using $(i)$, we have $1=[b^m, a] = [b, b, a]^{{{m}\choose{2}}} [b, a]^m$. Applying Lemma \ref{L:2.8} yields $ [b^{{{m}\choose{2}}}, b, a]=[b, b, a]^{{{m}\choose{2}}}$. Since $n\mid {{m}\choose{2}}$, we obtain $[b^{{{m}\choose{2}}}, b, a] = 1$, proving $[b, a]^m =1$. Similarly $(iv)$ can be proven using $(ii)$.
\end{proof}

In the next Theorem, we recall the collection formula given in Theorem $12.3.1$ of \cite{MH}.
\begin{theorem} \label{th:2.11}
We may collect the product $(x_1x_2 \dots x_j)^n$ in the form $(x_1x_2 \dots x_j)^n =x_1^n x_2^n\dots x_j^n C_{j+1}^{e_{j+1}}\dots C_k^{e_k}R_1R_2\dots R_s$, where $C_{j+1}, \dots C_k$ are the basic commutators on $x_1, x_2, \dots x_j$ in order, and $R_1, \dots, R_s$ are basic commutators later than $C_i$ in the ordering. For $j+1\le i\le k$, the exponent $e_i$ is of the form $e_i = a_1n + a_2 n^{(2)} + \cdots +a_m n^{(m)}$, where $m$ is the weight of $C_i$, the $a'$s are non-negative integers and do not depend on $n$ but only on $C_j$. Here $n^{(l)} = {{n}\choose{l}}$. 
\end{theorem}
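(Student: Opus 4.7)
The plan is to prove the collection formula by P. Hall's collection process combined with induction on $n$. I would first recall the standard total order on basic commutators in $x_1,\dots,x_j$: order by weight, then lexicographically within each weight, with $x_1<x_2<\cdots<x_j$ as the weight-$1$ basic commutators. Every commutator on the $x_i$'s is equal, in a free nilpotent group of given class, to a product of basic commutators via repeated application of the identities in Lemma \ref{L:2.7} together with the Jacobi/Hall--Witt identity; so the collected form on the right-hand side of the claim is in principle reachable.

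The collection process runs as follows. Given the word $(x_1 x_2\cdots x_j)^n$, identify the smallest basic commutator $C$ occurring in the uncollected tail and apply the identity $yx=xy[y,x]$ repeatedly to move every occurrence of $C$ leftward past larger elements into the collected prefix, picking up new commutators strictly later than $C$ to the right. Starting with $C=x_1$, then $x_2$, and so on, one first collects a prefix $x_1^n x_2^n\cdots x_j^n$ followed by a word in basic commutators of weight $\ge 2$. Continuing through basic commutators in increasing order of the ordering yields $x_1^n\cdots x_j^n C_{j+1}^{e_{j+1}}\cdots C_k^{e_k} R_1\cdots R_s$ with the $R_l$ basic commutators later than $C_k$. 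The process terminates because we may work modulo any fixed term $\gamma_{c+1}$ of the lower central series.

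The main step is to establish that each $e_i$ has the shape $a_1 n + a_2\binom{n}{2}+\cdots + a_m\binom{n}{m}$, with $m$ the weight of $C_i$ and $a_l\in\mathbb{Z}_{\ge 0}$ depending only on $C_i$. I would argue by induction on $n$ using $(x_1\cdots x_j)^{n+1}=(x_1\cdots x_j)(x_1\cdots x_j)^n$: plugging the inductive expression into the right factor and recollecting, the forward difference $e_i(n+1)-e_i(n)$ becomes a polynomial in $n$ of degree strictly less than $m$, whose coefficients are fixed non-negative integer combinations of previously-collected exponents of strictly lower weight. The finite difference identity $\binom{n+1}{k}-\binom{n}{k}=\binom{n}{k-1}$ then forces integer-valued sequences with $e_i(0)=0$ satisfying such a recurrence to be precisely non-negative integer combinations of $\binom{n}{1},\dots,\binom{n}{m}$, giving the claimed form.

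The hardest part is the bookkeeping that verifies the coefficients $a_l$ depend only on the structural data of $C_i$ and not on the specific sequence of collection moves or on $n$. This amounts to a confluence statement for the collection procedure modulo commutators of strictly greater weight, ensuring that any two valid orderings of commutation moves yield the same exponents on the collected basic commutators up through weight $m$. This confluence, together with the strict weight increase of every newly generated commutator during a move, is the technical core of Hall's original argument and is what allows the induction on $n$ to proceed consistently weight by weight.
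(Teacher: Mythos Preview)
The paper does not prove this theorem at all: it is stated with the preface ``we recall the collection formula given in Theorem~12.3.1 of \cite{MH}'' and is simply cited from M.~Hall's textbook without any argument. Your sketch is essentially the standard Hall collection argument (collection process plus induction on $n$ via finite differences), so there is nothing to compare on the level of proof strategy; the paper's ``approach'' is to cite.

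One remark on your write-up: the claim that the forward difference $e_i(n+1)-e_i(n)$ is a polynomial of degree strictly less than $m$ with \emph{non-negative} integer coefficients in the binomial basis needs more care than you indicate. Non-negativity does not follow from the recursion alone; in Hall's argument it comes from the combinatorial interpretation of $e_i$ as a count of certain collection events (or equivalently from a direct multinomial expansion), and the induction merely confirms the polynomial shape. If you intend to give a self-contained proof rather than a citation, you should make that counting argument explicit, since it is where the $a_l\ge 0$ actually comes from.
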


Let $\mathcal{A}(\{a, b\})$ denote the set of all basic commutators of $a, b$. Set $\mathcal{A}_i(\{a, b\}) = \{C\in \mathcal{A}(\{a, b\}) : w(C)\le i\}$, in which $w(C)$ denote the weight of $C$ in $a, b$. We restate Theorem \ref{th:2.11} in the following way.

\begin{lemma}\label{L:2.12}
Let $\G$ be a group, $a, b\in \G$ and $n, i\in \mathbb{N}$. Then $(ab)^n = \prod_{C\in \mathcal{A}_i(\{a, b\})} C^{f_C(n)} a^n b^n\mod \gamma_{i+1} (\langle a, b\rangle)$. For $C\in \mathcal{A}(\{a, b\})$, $f_C(n) = a_1{{n}\choose{1}} + a_2{{n}\choose{2}} +\dots +a_{r_C}{{n}\choose{r_C}}$, where $r_C$ is the largest $r$ with $a_r \ne 0$ and $a_1, a_2, \dots, a_{r_C}$ are non-negative integers depending only on $C$.
\end{lemma}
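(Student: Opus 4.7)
The plan is to deduce Lemma 2.12 as a two-generator, truncated specialization of Hall's collection formula recorded in Theorem 2.11. Setting $j=2$, $x_1=a$, $x_2=b$ in Theorem 2.11 gives an identity of the form
\begin{equation*}
(ab)^n \;=\; a^n b^n\, C_3^{e_3}\cdots C_k^{e_k}\, R_1 R_2\cdots R_s,
\end{equation*}
where $C_3,\ldots,C_k$ enumerates (in basic-commutator order) the basic commutators in $a,b$ of weight between $2$ and $i$, and $R_1,\ldots,R_s$ are basic commutators of weight strictly greater than $i$. Since each $R_\ell$ lies in $\gamma_{i+1}(\langle a,b\rangle)$ by the very definition of the lower central series, reducing modulo $\gamma_{i+1}(\langle a,b\rangle)$ discards all the $R_\ell$. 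The exponent formula furnished by Theorem 2.11 reads $e_j = a_1\binom{n}{1}+a_2\binom{n}{2}+\cdots+a_{w(C_j)}\binom{n}{w(C_j)}$ with nonnegative integers $a_r$ depending only on $C_j$; trimming trailing zeros and letting $r_{C_j}$ be the largest index with $a_r\neq 0$ produces precisely the $f_{C_j}(n)$ of the statement.

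The only residual discrepancy is that Lemma 2.12 displays $a^n b^n$ on the right of the commutator block while Theorem 2.11 places it on the left. To rearrange, I would commute $a^n b^n$ past each $C_j^{e_j}$; each such swap introduces a correction factor $[C_j^{e_j}, a^n b^n]$ lying in a strictly later term of the lower central series of $\langle a,b\rangle$ than $C_j$ itself. Applying Theorem 2.11 recursively to each correction produces another expression of basic commutators in $a,b$ with exponents that are again $\mathbb{Z}$-linear combinations of $\binom{n}{1},\ldots,\binom{n}{i}$ with nonnegative integer coefficients depending only on the commutator. Because each step strictly increases the minimum weight of the commutator factors still to be rearranged and everything in $\gamma_{i+1}(\langle a,b\rangle)$ is now trivial, the process terminates in finitely many steps.

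The main thing to verify is that this iterative rearrangement stays within the prescribed class of exponents, namely nonnegative integer combinations of binomial coefficients $\binom{n}{1},\ldots,\binom{n}{i}$ whose coefficients depend only on the commutator itself. Since each invocation of Theorem 2.11 preserves this form and the set $\mathcal{A}_i(\{a,b\})$ is finite, the verification reduces to a finite induction on weight. No new idea beyond the structural content of Theorem 2.11 is required, so Lemma 2.12 is merely a convenient repackaging of Hall's collection formula for two variables truncated at weight $i$.
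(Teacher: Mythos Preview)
Your approach matches the paper's: the paper gives no proof at all for Lemma~2.12, introducing it with the sentence ``We restate Theorem~2.11 in the following way,'' so it is treated as a cosmetic reformulation of Hall's collection formula. Your write-up is therefore more detailed than the paper's, and the specialization $j=2$, $x_1=a$, $x_2=b$ together with reduction modulo $\gamma_{i+1}(\langle a,b\rangle)$ is exactly the intended content.

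One point deserves a small correction. In your rearrangement step you say you will ``apply Theorem~2.11 recursively to each correction'' $[C_j^{e_j},a^nb^n]$, but Theorem~2.11 is a statement about powers $(x_1\cdots x_j)^n$, not about such commutators, so it does not apply directly. What actually happens is that, working modulo the next term of the lower central series, $[C^{e},a^nb^n]$ expands as a product of weight-$(w(C)+1)$ basic commutators with exponent $e\cdot n$; using $n\binom{n}{k}=k\binom{n}{k}+(k+1)\binom{n}{k+1}$ one sees that $f_C(n)\cdot n$ is again a nonnegative integer combination of binomial coefficients with top index at most $w(C)+1$, and iterating raises the weight until everything lands in $\gamma_{i+1}$. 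So your conclusion stands, but the mechanism is commutator linearisation plus a binomial identity, not a recursive invocation of Theorem~2.11. (For the record, the nonnegativity of the $a_r$ is never used in the paper's applications---Lemma~3.6 only needs $r_C\le w(C)$ and the divisibility $p^n\mid\binom{p^n}{j}$ for $1\le j<p$---so even a weaker integer-coefficient version would suffice.) A cleaner alternative, which the paper implicitly relies on, is simply to note that Hall's collection can be carried out collecting to the right, yielding Lemma~2.12 directly with the same exponent structure.
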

The following remark can be found in Lemma $3.2.5$ of \cite{SM}.

\begin{remark}\label{R:2:13}
When applying the commutator collection process to $(ab)^n$, if $C_i = [b,\  _ta]$, then $e_i = {{n}\choose{t+1}}$.
\end{remark}

\begin{remark}\label{R:2.14}
Since we follow left notations, $[b,\  _ta]$ in Remark \ref{R:2:13} corresponds to the commutator $[\ _tb, a]$.
\end{remark}

The next lemma can be found in \cite{SM}.

\begin{lemma}\label{L:2.15} 
Let $\G$ be a regular $\p$-group and $n$ be a positive integer. Then the following holds:
\begin{itemize}
\item[$(i)$] For all $a, b\in \G$, the following are equivalent: $[b, a]^{\p^n} =1; [b, a^{\p^n}] =1; [b^{\p^n}, a] =1$.
\item[$(ii)$] For all $a\in \G$, any commutator $C$ of weight at least $1$ in $a$ has order at most the order of $a$ modulo the center.
\item[$(iii)$] For all $g_1, g_2, \dots, g_r\in \G$, the order of the product $g_1g_2\dots g_r$ is at most the maximum of the orders of $g_1, g_2, \dots, g_r$.
\item[$(iv)$] For all $a, b\in \G$, $a^{\p^n} =b^{\p^n}$ if and only if $(ab^{-1})^{\p^n} =1$.
\end{itemize}
\end{lemma}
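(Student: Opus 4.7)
The plan is to derive all four statements from the defining property of a regular $\p$-group: for every $x, y \in \G$ and every $n\in\mathbb{N}$, one can write $(xy)^{\p^n}=x^{\p^n}y^{\p^n}c_1^{\p^n}\cdots c_k^{\p^n}$ with each $c_i$ in $[\langle x,y\rangle,\langle x,y\rangle]$. The natural order is to prove $(iv)$ first, derive $(i)$ and $(iii)$ from it, and finally deduce $(ii)$.

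For $(iv)$, the classical Hall equality, the strategy is to induct on the nilpotency class of the subgroup $\langle a,b\rangle$, which is itself a regular $\p$-group. Regularity applied to $x=a$ and $y=b^{-1}$ gives $(ab^{-1})^{\p^n}=a^{\p^n}b^{-\p^n}\prod_i c_i^{\p^n}$ with each $c_i\in[\langle a,b\rangle,\langle a,b\rangle]$. Assuming $a^{\p^n}=b^{\p^n}$, the first two factors cancel, and the remaining product lies in a subgroup of strictly smaller class, which allows the inductive hypothesis to close the argument. The converse direction is immediate from the same decomposition.

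For $(i)$, the key observation is $[b,a^{\p^n}]=(bab^{-1})^{\p^n}a^{-\p^n}$, so $[b,a^{\p^n}]=1$ if and only if $(bab^{-1})^{\p^n}=a^{\p^n}$, which by $(iv)$ is equivalent to $[b,a]^{\p^n}=(bab^{-1}\cdot a^{-1})^{\p^n}=1$. A symmetric computation, using $[b,a^{-1}]=a^{-1}[b,a]^{-1}a$, handles the equivalence with $[b^{\p^n},a]=1$. For $(iii)$, induct on $r$: the case $r=2$ follows directly from regularity, since $g_1^{\p^n}=g_2^{\p^n}=1$ reduces $(g_1g_2)^{\p^n}$ to a product of $\p^n$-th powers of commutators whose triviality is handled by induction on class (or, equivalently, by $(i)$ applied repeatedly); the general case reduces to $r=2$ by associating $g_1\cdots g_r=(g_1\cdots g_{r-1})g_r$.

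Finally for $(ii)$, let $\p^n$ be the order of $a$ modulo $\Z$, so that $a^{\p^n}\in\Z$ and hence $[a^{\p^n},g]=1$ for every $g\in\G$. Applying $(i)$ yields $[a,g]^{\p^n}=1$, settling the weight-$1$ case. For a commutator $C$ of higher weight in $a$, an induction on weight using the identities in Lemma \ref{L:2.7}, combined with the implication $C^{\p^n}=1\Rightarrow [C,h]^{\p^n}=1$ from $(i)$, shows $C^{\p^n}=1$. The principal obstacle I anticipate is the nested induction in $(iv)$: one must verify that the commutator subgroup $[\langle a,b\rangle,\langle a,b\rangle]$ is itself regular of strictly smaller class and that the cascade of auxiliary $\p^n$-th power terms arising from repeated applications of regularity can be controlled. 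These are standard facts about regular $\p$-groups but require careful bookkeeping at each step.
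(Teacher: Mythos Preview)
The paper does not prove this lemma; it is quoted from \cite{SM} as a standard fact about regular $\p$-groups (these are P.~Hall's classical results). Your outline is the right one, but the linear order you propose---prove $(iv)$ completely, then derive $(i)$ and $(iii)$---contains a circularity.

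In the forward direction of $(iv)$, from $a^{\p^n}=b^{\p^n}$ and regularity you obtain $(ab^{-1})^{\p^n}=\prod_i c_i^{\p^n}$ with $c_i\in\gamma_2(\langle a,b\rangle)$. To conclude that this product is trivial you need each $c_i^{\p^n}=1$; since $\gamma_2(\langle a,b\rangle)$ is generated by $\langle a,b\rangle$-conjugates of $[b,a]$, this reduces (via $(iii)$ in that smaller-class subgroup) to showing $[b,a]^{\p^n}=1$. But the hypothesis $a^{\p^n}=b^{\p^n}$ only gives $[b,a^{\p^n}]=[b,b^{\p^n}]=1$, and converting $[b,a^{\p^n}]=1$ into $[b,a]^{\p^n}=1$ is exactly statement $(i)$---which you intended to derive \emph{from} $(iv)$. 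The converse of $(iv)$ has the same issue; it is not ``immediate from the same decomposition,'' since $(ab^{-1})^{\p^n}=1$ yields only $a^{\p^n}b^{-\p^n}=(\prod_i c_i^{\p^n})^{-1}\in\gamma_2(\langle a,b\rangle)$.

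The remedy is a simultaneous induction on the nilpotency class. At class $c$ one establishes $(i)$ first: the subgroup $\langle{}^ba,a\rangle=\langle[b,a],a\rangle$ has class at most $c-1$, so $(iv)$ holds there by the inductive hypothesis, and your own identity $[b,a^{\p^n}]=({}^ba)^{\p^n}a^{-\p^n}$ then delivers $(i)$ at class $c$. (The paper uses precisely this manoeuvre in the proof of Lemma~\ref{L:3.2} to push $(i)$ from regular groups up to class $\p$.) With $(i)$ in hand at class $c$, both directions of $(iv)$ go through as you sketched, then $(iii)$ follows, and your argument for $(ii)$ from $(i)$ is correct as written.
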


\section{On a Theorem of Schur and solving the Conjecture for $\p$-groups of class at most $\p$}
For an odd prime $\p$, the conjecture was proved for $\p$-groups of class at most $\p-2$ by the author of \cite{PM3} and for $\p$-groups of class at most $\p-1$ by authors of \cite{MHM1}. We prove this result for $\p$-groups of class at most $\p$, by showing that the exponent of the commutator subgroup of a Schur cover of $\G$ divides exponent of $\G$. We refer the reader to \cite{KG} for an account on central extensions and Schur covers. In this section we also prove the generalization of the theorem by I. Schur (\cite{IS}) mentioned in the introduction.

A group $\G$ is said to be $n$-central if exponent of $G/{\Z}$ divides $n$ and $\G$ is said to be $n$-abelian if for all $a, b\in \G$, $(ab)^n = a^n b^n$. In \cite{PM1}, the author has proved that regular $\p$-groups have zero exponential rank. Thus for a regular $\p$-group $\G$, $\p^n$-central implies $\p^n$-abelian, which can also be seen using Lemma \ref{L:3.1}. Moreover Lemma \ref{L:3.2} for the case $n=1$ was proved by A. Mann in Lemma $9$ of \cite{AM}. The next two lemmas follow from Lemma \ref{L:2.15}. We include them for the sake of completeness.
 
\begin{lemma}\label{L:3.1}
Let $\G$ be a regular $\p$-group and $a, b\in \G$. If $[b, a^{\p^n}] =1$, then $(ab)^{\p^n} =a^{\p^n}b^{\p^n}$ for all $n\in \mathbb{N}$.
\end{lemma}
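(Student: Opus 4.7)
The plan is to proceed by induction on $n$, after recording three preliminary observations about $H := \langle a, b\rangle$, which is itself a regular $\p$-group. First, by Lemma~\ref{L:2.15}$(i)$, the hypothesis $[b, a^{\p^n}] = 1$ is equivalent to both $[b, a]^{\p^n} = 1$ and $[b^{\p^n}, a] = 1$, so $a^{\p^n}$ and $b^{\p^n}$ each commute with the generators $a, b$ of $H$; hence both are central in $H$. Second, $\gamma_2(H) = [H,H]$ is the normal closure of $[b,a]$ in $H$, so it is generated by the conjugates of $[b,a]^{\pm 1}$, each of order dividing $\p^n$; Lemma~\ref{L:2.15}$(iii)$ then forces $\e(\gamma_2(H)) \mid \p^n$. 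Third, the defining property of regularity writes $(ab)^{\p} = a^{\p} b^{\p} c^{\p}$ for some $c \in \gamma_2(H)$, since the set of $\p$-th powers in $\gamma_2(H)$ is itself a subgroup.

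For the base case $n = 1$, the exponent bound immediately gives $c^{\p} = 1$, whence $(ab)^{\p} = a^{\p} b^{\p}$. For the inductive step, assume the lemma for $n-1$ and decompose
\[ (ab)^{\p^n} = \bigl((ab)^{\p}\bigr)^{\p^{n-1}} = \bigl(a^{\p} \cdot (b^{\p} c^{\p})\bigr)^{\p^{n-1}}. \]
I then apply the inductive hypothesis twice. First, to the pair $(a^{\p}, b^{\p} c^{\p})$ in $H$: the required condition $[b^{\p} c^{\p}, (a^{\p})^{\p^{n-1}}] = [b^{\p} c^{\p}, a^{\p^n}] = 1$ holds because $a^{\p^n}$ is central, yielding $(a^{\p} \cdot b^{\p} c^{\p})^{\p^{n-1}} = a^{\p^n} (b^{\p} c^{\p})^{\p^{n-1}}$. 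Second, to $(b^{\p}, c^{\p})$: the centrality of $b^{\p^n}$ verifies $[c^{\p}, b^{\p^n}] = 1$, yielding $(b^{\p} c^{\p})^{\p^{n-1}} = b^{\p^n} c^{\p^n}$. Finally, $c \in \gamma_2(H)$ combined with $\e(\gamma_2(H)) \mid \p^n$ gives $c^{\p^n} = 1$, and we conclude $(ab)^{\p^n} = a^{\p^n} b^{\p^n}$.

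The main subtlety lies in choosing the right induction and verifying the commutator hypotheses at each application. The key leverage comes from the fact that $a^{\p^n}$ and $b^{\p^n}$ are central in all of $H$, not merely commuting with the original generators; this full centrality automatically annihilates commutators with arbitrary elements such as $b^{\p} c^{\p}$ and $c^{\p}$ that arise in the unpacking of $(ab)^{\p^n}$, which is exactly what makes the double application of the inductive hypothesis close the argument without any further appeal to the structure of $H$.
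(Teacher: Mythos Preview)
Your proof is correct. The paper's argument is shorter because it invokes directly the standard consequence of regularity that $(ab)^{\p^n} = a^{\p^n} b^{\p^n} s^{\p^n}$ for some $s \in \gamma_2(H)$, valid for every $n$, and then kills $s^{\p^n}$ using the same observation you make, namely $\e(\gamma_2(H)) \mid \p^n$ via Lemma~\ref{L:2.15}$(i)$ and $(iii)$. You instead rebuild the $\p^n$-version from the $\p$-version by an explicit induction, peeling off one factor of $\p$ and applying the lemma for $n-1$ twice to the pairs $(a^{\p}, b^{\p} c^{\p})$ and $(b^{\p}, c^{\p})$, with the commutator hypotheses supplied by the centrality of $a^{\p^n}$ and $b^{\p^n}$ in $H$. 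The trade-off: the paper's route is a one-liner once the $\p^n$-th power regularity identity is on the table, while your route is more self-contained, relying only on the defining $\p$-th power identity and the facts already recorded in Lemma~\ref{L:2.15}. Both hinge on the same key point, $\e(\gamma_2(H)) \mid \p^n$.
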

\begin{proof} Since $\G$ is regular $(ab)^{\p^n} = a^{\p^n}b^{\p^n}s^{\p^n}$, where $s\in \gamma_2\langle a, b\rangle$. Using Lemma \ref{L:2.15}$(i)$, we get $[b, a]^{\p^n} =1$. Note that $\gamma_2\langle a, b\rangle = \langle \{^g [b, a] : g\in \langle a, b\rangle \}\rangle$. Hence by Lemma \ref{L:2.15}$(iii)$, we obtain $s^{\p^n} =1$ completing the proof.
\end{proof}

\begin{lemma}\label{L:3.2}
Let $\G$ be a $\p$-group of nilpotency class $\p$ and $a, b\in \G$. Then the following are equivalent : $[b, a]^{\p^n} =1; [b, a^{\p^n}] =1; [b^{\p^n}, a] =1$ for all $n\in \mathbb{N}$.
\end{lemma}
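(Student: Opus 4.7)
The plan is to reduce the three-way equivalence to the regular quotient $\bar{\G} := \G / \gamma_{\p}(\G)$ and then to promote each equivalence from ``modulo $\gamma_{\p}(\G)$'' to ``in $\G$''.

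First, since $\G$ has nilpotency class exactly $\p$, we have $[\gamma_{\p}(\G), \G] \subset \gamma_{\p+1}(\G) = 1$, so $\gamma_{\p}(\G) \subset \Z$. Consequently $\bar{\G}$ has nilpotency class at most $\p-1$, which by P.~Hall's classical theorem makes $\bar{\G}$ a regular $\p$-group. Applying Lemma~\ref{L:2.15}$(i)$ inside $\bar{\G}$ immediately yields
\[
[\bar b, \bar a]^{\p^n} = \bar 1 \iff [\bar b, \bar a^{\p^n}] = \bar 1 \iff [\bar b^{\p^n}, \bar a] = \bar 1.
\]
Pulled back to $\G$, this means that if any one of $[b, a]^{\p^n}$, $[b, a^{\p^n}]$, $[b^{\p^n}, a]$ equals $1$ in $\G$, the other two already lie in $\gamma_{\p}(\G) \subset \Z$.

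Second, to drive these residual central elements down to $1$, I would apply the Hall commutator collection process (Theorem~\ref{th:2.11} and Lemma~\ref{L:2.12}, together with Remarks~\ref{R:2:13}--\ref{R:2.14}) inside the two-generator subgroup $H := \langle a, b \rangle$. This yields an expansion of the shape
\[
[b^{\p^n}, a] \equiv [b, a]^{\p^n} \prod_{k=2}^{\p-1} [_k b, a]^{\binom{\p^n}{k}} \pmod{\gamma_{\p}(H)},
\]
and an analogous identity for $[b, a^{\p^n}]$ (interchanging the roles of $a$ and $b$). For $2 \le k \le \p-1$ the binomial $\binom{\p^n}{k}$ is divisible by $\p^n$, since the denominator $k!$ is coprime to $\p$. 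Combined with Lemma~\ref{L:2.15}$(ii)$ applied in the regular quotient $\bar{\G}$, which bounds the order of each commutator $[_k b, a]$ modulo $\gamma_{\p}(\G)$ by the order of $b$ modulo the centre of $\bar{\G}$, this forces every factor $[_k b, a]^{\binom{\p^n}{k}}$ to lie in $\gamma_{\p}(\G)$. Centrality of $\gamma_{\p}(\G)$ then lets one separate the collected factors freely, and Lemma~\ref{L:2.10} handles the orders of the central commutators, closing up the three equivalences in $\G$ itself.

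The main obstacle is bookkeeping the weight-$\p$ layer of the Hall collection. The displayed identity is only valid modulo $\gamma_{\p}(H)$, and its $\gamma_{\p}(H)$-correction contains nested commutators of weight exactly $\p$ whose $\binom{\p^n}{\p-1}$-th powers lie in the central layer $\gamma_{\p}(\G)$ and therefore overlap with the residue coming from the first step. Disentangling these requires combining the $\p$-adic divisibility $\p^n \mid \binom{\p^n}{k}$ for all $1 \le k \le \p-1$ with a second iteration of the collection argument carried out inside the abelian subgroup $\gamma_{\p}(H)$, and verifying that every central contribution has order dividing $\p^n$. Once this is in place, the three implications $[b,a]^{\p^n}=1 \Rightarrow [b, a^{\p^n}] = 1 \Rightarrow [b^{\p^n}, a] = 1 \Rightarrow [b,a]^{\p^n}=1$ close up symmetrically.
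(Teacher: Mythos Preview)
Your plan does not close, and the gap is precisely the one you flag as ``the main obstacle.'' Two concrete problems. First, the displayed collection identity for $[b^{\p^n}, a]$ modulo $\gamma_{\p}(H)$ is not what Theorem~\ref{th:2.11} or Lemma~\ref{L:2.12} give you (those collect $(xy)^n$, not $[x^n,y]$), and Lemma~\ref{L:2.10}(i), which does produce such a formula, requires $2i+3j \ge \cc+1$; for generic $a,b$ in the class-$(\p-1)$ quotient this reads $5 \ge \p$ and so fails once $\p \ge 7$---extra basic commutators beyond the $[\ _{k}b,a]$ appear. Second, and more fundamentally: even granting the identity and even replacing your appeal to Lemma~\ref{L:2.15}(ii) by an iterated use of Lemma~\ref{L:2.15}(i) (which is what actually yields $[\ _{k}b,a]^{\p^n} \in \gamma_{\p}(\G)$, since $[b,a]^{\p^n}=1$ says nothing about the order of $\bar b$ modulo $\ZZ(\bar\G)$), all you recover is $[b^{\p^n},a] \in \gamma_{\p}(\G)$---exactly what Step~1 already gave. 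Driving that central residue down to $1$ is the entire content of the lemma, and nothing in the sketch does it: Lemma~\ref{L:2.10} does not bound the orders of the weight-$\p$ commutators without hypotheses you do not have, and a ``second iteration inside $\gamma_{\p}(H)$'' is vacuous because that subgroup is already central.

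The paper sidesteps all of this with one observation. Rather than pass to the \emph{quotient} $\G/\gamma_{\p}(\G)$, pass to the \emph{subgroup} $K := \langle {}^{b}a,\, a \rangle = \langle [b,a],\, a \rangle$. Since $[b,a] \in \gamma_2(\G)$ one has $\gamma_{\p}(K) \subset \gamma_{\p+1}(\G) = 1$, so $K$ itself has class at most $\p-1$ and is therefore regular. Lemma~\ref{L:2.15}(iv), applied inside $K$ with $x = {}^{b}a$ and $y = a$, then reads $({}^{b}a)^{\p^n} = a^{\p^n} \iff ({}^{b}a\cdot a^{-1})^{\p^n} = 1$, i.e.\ $[b, a^{\p^n}] = 1 \iff [b,a]^{\p^n} = 1$. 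The third equivalence follows by swapping $a$ and $b$. No quotient, no collection, no weight-$\p$ bookkeeping.
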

\begin{proof}
By symmetry it is enough to show $[b, a]^{\p^n} =1$ if and only if $[b, a^{\p^n}] =1$. We have $[b, a^{\p^n}] = ba^{\p^n}b^{-1}(a^{-1})^{p^n} = (^b a)^{\p^n}(a^{-1})^{\p^n}$ and $[b, a]^{\p^n} = (^b a a^{-1})^{\p^n}$. Note that the group $\langle ^b a, a \rangle =\langle [b, a], a \rangle$ has nilpotency class $\le \p-1$. Hence using Lemma \ref{L:2.15}$(iv)$, we obtain $(^b a)^{\p^n} =(a)^{\p^n}$ if and only if $(^b a a^{-1})^{\p^n} =1$. Thus $[b, a^{\p^n}] =1$ if and only if $[b, a]^{\p^n} =1$.
\end{proof}

\begin{corollary}\label{cr:3.3}
Let $\G$ be a $\p$-group of nilpotency class $\p$. Then $\e(\gamma_2(\G))\mid \e(\G/{\Z})$.
\end{corollary}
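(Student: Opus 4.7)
The plan is to combine Lemma~\ref{L:3.2} with the regularity of $\gamma_2(\G)$, and then finish using Lemma~\ref{L:2.15}(iii). Set $\p^n = \e(\G/\Z)$. For every $a \in \G$ we have $a^{\p^n} \in \Z$, so $[b, a^{\p^n}] = 1$ for all $b \in \G$. Because $\G$ has nilpotency class $\p$, Lemma~\ref{L:3.2} upgrades this to $[b, a]^{\p^n} = 1$ for every pair $a, b \in \G$. Hence every commutator in $\G$ has order dividing $\p^n$.

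Next I would verify that $\gamma_2(\G)$ is itself a regular $\p$-group. Iterating the standard inclusion $[\gamma_i(\G), \gamma_j(\G)] \subseteq \gamma_{i+j}(\G)$ yields, by induction on $k$, that $\gamma_k(\gamma_2(\G)) \subseteq \gamma_{2k}(\G)$. Since $\gamma_{\p+1}(\G) = 1$, this forces $\gamma_k(\gamma_2(\G)) = 1$ as soon as $2k \ge \p + 1$, so the nilpotency class of $\gamma_2(\G)$ is at most $\lceil (\p-1)/2 \rceil$, which is strictly less than $\p$. By the classical theorem of P.~Hall that $\p$-groups of class below $\p$ are regular, $\gamma_2(\G)$ is regular. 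Since $\gamma_2(\G)$ is generated by the commutators $[a, b]$ for $a, b \in \G$, each of which has order dividing $\p^n$ by the first step, Lemma~\ref{L:2.15}(iii) (the order of a product in a regular $\p$-group is bounded above by the maximum of the orders of its factors) gives $\e(\gamma_2(\G)) \mid \p^n = \e(\G/\Z)$, completing the proof.

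There is no serious technical obstacle: Lemma~\ref{L:3.2} does the essential work of converting the central-quotient condition into a uniform bound on commutator orders, and the regularity of $\gamma_2(\G)$ --- a direct consequence of the class being roughly halved when passing from $\G$ to $\gamma_2(\G)$ --- allows one to lift this per-generator bound to the whole of $\gamma_2(\G)$. The mildest care point is just making sure that the class estimate $\lceil (\p-1)/2 \rceil$ really is below $\p$ for every prime $\p$ allowed by the hypothesis, which is immediate.
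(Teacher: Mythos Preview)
Your proof is correct and follows essentially the same route as the paper: both arguments first use Lemma~\ref{L:3.2} to kill $[b,a]^{\p^n}$ for every $a,b\in\G$, then exploit the regularity of $\gamma_2(\G)$ (class $<\p$) to pass from generators to all elements. The only cosmetic difference is that the paper cites Lemma~\ref{L:3.1} (regular $\p^n$-central $\Rightarrow$ $\p^n$-abelian) rather than Lemma~\ref{L:2.15}(iii), but these are equivalent ways of finishing.
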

Corollary \ref{cr:3.3} follows from Lemma \ref{L:3.2} and Lemma \ref{L:3.1}. 

\begin{corollary}\label{cr:3.4}
Let $\G$ be a $\p$-group of nilpotency class at most $\p-1$. Then $\e(\M)\mid \e(\G)$.
\end{corollary}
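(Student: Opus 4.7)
The plan is to deduce this corollary from Corollary \ref{cr:3.3} by passing to a Schur cover. I would let $\G^*$ denote a Schur cover of $\G$, so that we have a central extension
\[ 1 \longrightarrow \M \longrightarrow \G^* \longrightarrow \G \longrightarrow 1 \]
with $\M \subseteq \gamma_2(\G^*) \cap \ZZ(\G^*)$. Since $\M \subseteq \gamma_2(\G^*)$, the inequality $\e(\M) \mid \e(\gamma_2(\G^*))$ is automatic, so the task reduces to bounding $\e(\gamma_2(\G^*))$ in terms of $\e(\G)$.

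First I would check that $\G^*$ has nilpotency class at most $\p$. Since $\G \cong \G^*/\M$ has class at most $\p-1$, we get $\gamma_{\p}(\G^*) \subseteq \M \subseteq \ZZ(\G^*)$, whence $\gamma_{\p+1}(\G^*) = [\gamma_{\p}(\G^*),\G^*] = 1$. Therefore Corollary \ref{cr:3.3} applies to $\G^*$ and yields
\[ \e(\gamma_2(\G^*)) \mid \e(\G^*/\ZZ(\G^*)). \]

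To finish, observe that $\M \subseteq \ZZ(\G^*)$ means $\G^*/\ZZ(\G^*)$ is a quotient of $\G^*/\M \cong \G$, so $\e(\G^*/\ZZ(\G^*)) \mid \e(\G)$. Chaining the divisibilities gives
\[ \e(\M) \mid \e(\gamma_2(\G^*)) \mid \e(\G^*/\ZZ(\G^*)) \mid \e(\G), \]
which is the desired conclusion. I do not foresee a genuine obstacle: the argument is a routine repackaging of Corollary \ref{cr:3.3} once the standard Schur cover properties ($\M$ sits inside $\gamma_2(\G^*) \cap \ZZ(\G^*)$) are recalled, and the only small point needing attention is the class bound on $\G^*$, which comes for free from the centrality of $\M$.
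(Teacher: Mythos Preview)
Your proof is correct and follows exactly the approach of the paper, which simply states that applying Corollary~\ref{cr:3.3} to a Schur cover of $\G$ yields the result. You have merely spelled out the routine details (the class bound on $\G^*$ and the divisibility chain through $\G^*/\ZZ(\G^*)$) that the paper leaves implicit.
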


\begin{proof}
Applying Corollary \ref{cr:3.3} to a Schur cover of $\G$ yields the proof.
\end{proof}

The next lemma is a generalization of Lemma \ref{L:2.15}$(ii)$ for groups of class $\p$.
\begin{lemma}\label{L:3.5}
Let $\G$ be a $\p$-group of nilpotency class $\p$. Then for all $a\in \G$, any commutator $C$ of weight at least $1$ in $a$ has order at most the order of $a$ modulo the center.
\end{lemma}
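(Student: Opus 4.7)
My plan is to prove the lemma by induction on the weight $w$ of the commutator $C$, using Lemma \ref{L:3.2} as the main tool. The key observation is that Lemma \ref{L:3.2} is stated for \emph{all} pairs of elements $a, b$ in the class-$p$ group $G$: the equivalence $[b, a]^{p^n} = 1 \Leftrightarrow [b, a^{p^n}] = 1$ holds in particular when $a$ (or $b$) is itself a sub-commutator, since sub-commutators are just elements of $G$. This flexibility is what makes an inductive argument go through.

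Let $p^n$ denote the order of $a$ modulo $Z(G)$, so $a^{p^n} \in Z(G)$. For the base case $w = 2$, write $C = [x, y]$ with $a \in \{x, y\}$. If $x = a$, then $a^{p^n}$ being central gives $[a^{p^n}, y] = 1$, and Lemma \ref{L:3.2} yields $C^{p^n} = [a, y]^{p^n} = 1$; the case $y = a$ is symmetric (or follows from $[x, a] = [a, x]^{-1}$).

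For the inductive step $w \geq 3$, I express $C$ in the right-normed form $C = [g_1, g_2, \ldots, g_w] = [g_1, D]$, where $D = [g_2, \ldots, g_w]$ is a commutator of weight $w - 1 \geq 2$. If $g_1 = a$, the base-case argument applies verbatim to $[a, D]$: centrality of $a^{p^n}$ and Lemma \ref{L:3.2} give $C^{p^n} = 1$. Otherwise $a$ must appear among $g_2, \ldots, g_w$, so $D$ is a commutator of smaller weight still containing $a$, and the inductive hypothesis gives $D^{p^n} = 1$. Then $[g_1, D^{p^n}] = [g_1, 1] = 1$, and applying Lemma \ref{L:3.2} with roles $b = g_1$ and "$a$" $= D$ yields $C^{p^n} = [g_1, D]^{p^n} = 1$.

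The only subtle point is confirming that Lemma \ref{L:3.2} may be invoked with a sub-commutator in the exponent slot, which is legitimate because the lemma is stated for all elements of $G$. The induction is well-founded because the base case $w = 2$ coincides precisely with the boundary where the inner commutator $D$ would degenerate to a single element; at that boundary we cannot conclude anything about $a^{p^n}$ itself (only that it is central), but we do not need to, since centrality alone is what drives the base step. Beyond this, everything is a straightforward unwinding of the right-normed bracketing.
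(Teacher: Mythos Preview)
Your inductive strategy and use of Lemma~\ref{L:3.2} are exactly right, and essentially match the paper's argument. There is, however, a genuine gap in the inductive step: you assume $C$ can be written in right-normed form $C=[g_1,D]$ with $g_1$ a single group element and $D$ a commutator of weight $w-1$. An arbitrary commutator need not admit such a decomposition. For instance, $C=[[a,y],[z,w]]$ has weight~$4$ and contains $a$, but its outermost bracket splits into two weight-$2$ sub-commutators; no amount of inverting will put a single element on one side. Non-right-normed commutators of this kind do occur among the basic commutators $C\in\mathcal{A}(\{[h,a],a\})$ to which this lemma is applied in Lemma~\ref{L:3.6}, so the restriction matters.

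The repair is immediate and is precisely what the paper does: at the inductive step write $C=[C_2,C_1]$ where each $C_i$ is either a single element or a commutator of strictly smaller weight. If one of the $C_i$ equals $a$, then Lemma~\ref{L:3.2} (using $[b,a^{p^n}]=1$ or $[b^{p^n},a]=1$ as appropriate) gives $C^{p^n}=1$ directly. Otherwise some $C_i$ has weight $\ge 1$ in $a$; the inductive hypothesis gives $C_i^{p^n}=1$, hence $[C_2,C_1^{p^n}]=1$ (or $[C_2^{p^n},C_1]=1$), and Lemma~\ref{L:3.2} applied with that sub-commutator in the relevant slot yields $C^{p^n}=1$. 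Your proof already contains this argument in the special case where $C_2$ is always a single element; simply dropping that assumption completes it.
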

\begin{proof}
Let the order of $a$ modulo the center be $\p^n$, $n\ge 1$. If $C= [g_1, a]$ or $C = [a, g_2]$, $g_1, g_2\in G$, then applying Lemma \ref{L:3.2} yields $C^{\p^n} = 1$. We proceed by induction on the weight of $C$. Let $w(C) >2$ and $C=[C_2, C_1]$, where both $C_1, C_2$ are having lesser weight than $C$. If either $C_1$ or $C_2$ is $a$, then using Lemma \ref{L:3.2}, we obtain $C^{\p^n} = 1$. Otherwise, suppose $C_1$ has weight $\ge 1$ in $a$. Then by induction hypothesis, $C_1^{\p^n} = 1$ and thus $[C_2, C_1^{\p^n}] =1$. Hence Lemma \ref{L:3.2} implies that $C^{\p^n}=1$.
\end{proof}

Observe that if $\G$ is a group of nilpotency class $3$, then for every $a, b\in \G$ $[b, a^n] = [a, b, a]^{{n}\choose{2}} [b, a]^n$ for all $n\in \mathbb{N}$. So Lemma \ref{L:3.6} clearly holds for the case $\p = 2$. 

\begin{lemma}\label{L:3.6}
Let $\p$ be an odd prime and $n$ be a positive integer. Suppose $\G$ is a $\p$-group and $\HH=\langle a, b \rangle$ has nilpotency class at most $\p+1$ for some $a,b\in \G$. If $[b, a^{\p^n}] = 1$, then $[\ _{\p-1}a, [h, a]]^{{\p^n}\choose{\p}}[h, a]^{\p^n}=1$ for all $h\in \HH$.
\end{lemma}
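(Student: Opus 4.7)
First I would promote the hypothesis to all of $\HH$. Using the identity $[gg', k] = {}^g[g', k]\,[g, k]$ from Lemma~\ref{L:2.7}(i) one checks that $S = \{h' \in \HH : [h', a^{\p^n}] = 1\}$ is a subgroup of $\HH$. Since $a,b \in S$ and $\HH = \langle a, b\rangle$, we get $S = \HH$; in particular $a^{\p^n} \in Z(\HH)$ and $[h, a^{\p^n}] = 1$ for every $h \in \HH$. Next I would work inside $K = \langle a, [h, a]\rangle$. Because $[h, a] \in \gamma_2(\HH)$, an easy induction shows $\gamma_j(K) \subseteq \gamma_{j+1}(\HH)$ for $j \geq 2$, so $\gamma_{\p+1}(K) = 1$ and $K$ has class at most $\p$. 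Moreover $a^{\p^n} \in Z(\HH) \cap K = Z(K)$, so the order of $a$ modulo $Z(K)$ divides $\p^n$; Lemma~\ref{L:3.5} applied inside $K$ then yields that every commutator of weight at least $1$ in $a$ has order dividing $\p^n$.

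Setting $v = hah^{-1} = [h, a]\,a$ gives the key identity $[h, a^{\p^n}] = v^{\p^n}\,a^{-\p^n}$. With $x = [h, a]$ and $y = a$, Lemma~\ref{L:2.12} inside $K$ with $i = \p$ (so that $\gamma_{\p+1}(K) = 1$) provides
\[
v^{\p^n} = (xy)^{\p^n} = \prod_{C} C^{f_C(\p^n)}\cdot x^{\p^n}\,y^{\p^n},
\]
where the product runs over basic commutators $C$ in $\{x, y\}$ of weight between $2$ and $\p$; by Remarks~\ref{R:2:13}--\ref{R:2.14} the basic commutator $[_t y, x] = [_t a, [h, a]]$ carries exponent $\binom{\p^n}{t+1}$ for each $1 \leq t \leq \p - 1$.

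The heart of the argument is a term-by-term analysis. Writing a generic basic commutator $C$ with $r$ occurrences of $x$ and $s$ of $y$, its weight is $r+s$ in $K$ and $2r+s$ in $\HH$, so $C = 1$ in $\HH$ unless $2r + s \leq \p + 1$. For $r = 1$ and $1 \leq t = s \leq \p - 2$, the exponent $\binom{\p^n}{t+1}$ is divisible by $\p^n$ because $v_\p\binom{\p^n}{k} = n$ for $1 \leq k \leq \p-1$, while $C$ has order dividing $\p^n$, so this factor collapses to $1$; the case $r = 1$, $t = \p-1$ gives the surviving contribution $[_{\p-1}a, [h, a]]^{\binom{\p^n}{\p}}$. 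If $r \geq 2$, non-triviality in $\HH$ forces $r + s \leq \p - 1$, so $f_C(\p^n) = \sum_j a_j\binom{\p^n}{j}$ only involves $j \leq \p - 1$ and is divisible by $\p^n$; since $s \geq 1$ the commutator $C$ has positive weight in $a$, hence order dividing $\p^n$, and this factor is again $1$. Assembling everything, $v^{\p^n} = [_{\p-1}a, [h, a]]^{\binom{\p^n}{\p}}\,[h, a]^{\p^n}\,a^{\p^n}$, and multiplying on the right by $a^{-\p^n}$ gives $[h, a^{\p^n}] = [_{\p-1}a, [h, a]]^{\binom{\p^n}{\p}}\,[h, a]^{\p^n}$; the hypothesis $[h, a^{\p^n}] = 1$ then closes the argument.

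The main obstacle is precisely this term-by-term vanishing statement: one has to juggle the divisibility $\p^n \mid \binom{\p^n}{k}$ for $k < \p$ with the order bound from Lemma~\ref{L:3.5}, while simultaneously tracking weights in both $K$ and $\HH$ to rule out the type-B basic commutators. The reason $[_{\p-1}a, [h, a]]^{\binom{\p^n}{\p}}$ is the unique surviving contribution is that $\binom{\p^n}{\p}$ loses one factor of $\p$ compared with the lower binomials, so the order bound $\p^n$ no longer suffices to eliminate it.
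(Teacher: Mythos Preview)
Your proof is correct and follows essentially the same route as the paper's: both reduce to $a^{\p^n}\in Z(\HH)$, rewrite $[h,a^{\p^n}]=([h,a]\,a)^{\p^n}a^{-\p^n}$, apply the collection formula (Lemma~\ref{L:2.12}) inside $K=\langle [h,a],a\rangle$ of class at most $\p$, and use Lemma~\ref{L:3.5} together with the weight count $2r+s\le \p+1$ to kill every factor except $[\,_{\p-1}a,[h,a]]^{\binom{\p^n}{\p}}$. Your write-up simply makes explicit a few steps the paper leaves to the reader (why $a^{\p^n}\in Z(\HH)$, why $\gamma_{\p+1}(K)=1$, and why $s\ge 1$ when $r\ge 2$).
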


\begin{proof}
Since $\HH = \langle a, b \rangle$, $a^{\p^n}\in \ZZ(\HH)$. So $1= [h, a^{\p^n}] = ha^{\p^n}h^{-1}a^{-\p^n} = (^h a)^{\p^n}a^{-\p^n} = ([h, a]a)^{\p^n}a^{-\p^n}$. Note that the group $\langle [h, a], a\rangle$ has class $\le \p$. Applying Lemma \ref{L:2.12} to $([h, a]a)^{\p^n}$, we have
 \begin{equation} \label{eq:3.6.1} 1 = \prod_{C\in \mathcal{A}_{\p}(\{[h, a], a\})} C^{f_C(\p^n)} [h, a]^{\p^n}.
\end{equation}
Note that Lemma \ref{L:3.5} implies that $C^{\p^n} = 1$ for every $C\in \mathcal{A}(\{[h, a], a\})$. Consider a $C\in \mathcal{A}_{\p}(\{[h, a], a\})$ and let $n_1$, $n_2$ be weights of $C$ in $[h, a]$, $a$ respectively. Then $n_1 + n_2\le \p$ and since $C\in \gamma_{2n_1+n_2}(\HH)$, $2n_1+n_2\le \p+1$. From Lemma \ref{L:2.12}, recall that  $r_C\le n_1 + n_2$. So if $n_1 + n_2<\p$, then $r_C< \p$. Moreover if $n_1 + n_2 = \p$, then $2n_1 + n_2 \le \p+1$ yielding $n_1 = 1$ and $n_2 = \p-1$. Therefore $r_C < \p$ for every $C\in \mathcal{A}_{\p} (\{[h, a], a\})$, hence $\p^n\mid f_C(\p^n)$, except for $C = [\ _{\p-1}a, [h, a]]$. Also $f_{[\ _{\p-1}a, [h, a]]} (\p^n) = {{\p^n}\choose{\p}}$ by Remark \ref{R:2.14}. Thus \eqref{eq:3.6.1} gives the result we seek.
\end{proof}

\begin{definition} \label{D:3.7}
Define $\alpha_m (n)=\sum_{1\le i_1 <i_2 <\dots<i_{m-1} <n} {{n}\choose{i_{m-1}}} {{i_{m-1}}\choose{i_{m-2}}} \dots {{i_2}\choose{i_1}}$, 
where $1< m\le n\in \mathbb{N}$.
\end{definition}

\begin{lemma} \label{L:3.8}
For $2 < m\le n$, $\alpha_{m} (n) = \sum_{k=m-1}^{n-1} {{n}\choose{k}} \alpha_{m-1} (k)$.
\end{lemma}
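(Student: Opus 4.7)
The plan is to prove this recursion directly from the definition by peeling off the outermost index. First I would rewrite
\[
\alpha_m(n)=\sum_{1\le i_1<i_2<\dots<i_{m-1}<n}\binom{n}{i_{m-1}}\binom{i_{m-1}}{i_{m-2}}\cdots\binom{i_2}{i_1}
\]
by grouping terms according to the value of the top index $i_{m-1}$. Setting $k=i_{m-1}$ and pulling $\binom{n}{k}$ outside the inner sum, I get
\[
\alpha_m(n)=\sum_{k}\binom{n}{k}\sum_{1\le i_1<i_2<\dots<i_{m-2}<k}\binom{k}{i_{m-2}}\binom{i_{m-2}}{i_{m-3}}\cdots\binom{i_2}{i_1}.
\]

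Next I would determine the range of $k$. Since $i_{m-1}<n$ we have $k\le n-1$. Since there must exist integers $1\le i_1<i_2<\dots<i_{m-2}<k$, the minimal such configuration forces $i_j\ge j$ for each $j$, so $k>m-2$, i.e.\ $k\ge m-1$; note this uses the hypothesis $m>2$ so that the inner sum is a genuine $(m-1)$-term sum (there is at least one strict inequality $i_{m-2}<k$ to enforce). Comparing the inner sum with Definition \ref{D:3.7} applied to $\alpha_{m-1}(k)$, I see the indexing set and summand match term by term, so the inner sum equals $\alpha_{m-1}(k)$. Substituting yields
\[
\alpha_m(n)=\sum_{k=m-1}^{n-1}\binom{n}{k}\alpha_{m-1}(k),
\]
which is the claim.

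The argument is purely combinatorial bookkeeping and there is no real obstacle; the only point requiring mild care is verifying the lower bound $k\ge m-1$ on the summation index, which is exactly the smallest value for which $\alpha_{m-1}(k)$ as defined in Definition \ref{D:3.7} is a non-vacuous sum. Because the restriction $m>2$ is explicit in the statement, this edge case causes no trouble.
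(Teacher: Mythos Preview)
Your argument is correct and is essentially the same as the paper's: both proofs recognize that the sum $\sum_{1\le i_1<\dots<i_{m-2}<k<n}\binom{n}{k}\binom{k}{i_{m-2}}\cdots\binom{i_2}{i_1}$ can be grouped by the value of $k=i_{m-1}$, with the inner sum equal to $\alpha_{m-1}(k)$. The only cosmetic difference is that the paper starts from the right-hand side and expands, whereas you start from $\alpha_m(n)$ and collapse; your extra care in justifying the range $m-1\le k\le n-1$ is a nice touch.
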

\begin{proof}
Evaluating the sum on the right
\begin{align*} \sum_{k=m-1}^{n-1} {{n}\choose{k}} \alpha_{m-1}(k) &=\sum_{k=m-1}^{n-1} \Bigg({{n}\choose{k}}\times \sum_{1\le i_1 <i_2 <\dots<i_{m-2} <k} {{k}\choose{i_{m-2}}} {{i_{m-2}}\choose{i_{m-3}}} \dots {{i_2}\choose{i_1}}\Bigg)\\
&=\sum_{1\le i_1 <i_2 <\dots<i_{m-2} <k <n} {{n}\choose{k}} {{k}\choose{i_{m-2}}} \dots {{i_2}\choose{i_1}} \\
&= \alpha_m(n).
\end{align*}
\end{proof}

If $\G$ is a $2$-group of class $3$ and $\e(\G/{\Z}) = 2^n$, then $[b, a^{2^n}]=[a, b, a]^{{2^n}\choose{2}} [b, a]^{2^n}=1$ for all $a, b\in \G$. Since $a, b\in \G$ are arbitrary, replacing $a$ with $ab$ in $[a, b, a]^{{2^n}\choose{2}} [b, a]^{2^n}=1$ yields $[ab, b, ab]^{{2^n}\choose{2}} [a, ab]^{2n} = [a, b, a]^{{2^n}\choose{2}} [b, b, a]^{{2^n}\choose{2}} [b, a]^{2^n}=1$. Thus we obtain $[b, b, a]^{{2^n}\choose{2}} =1$. Moreover $[b, b, a] = [b, a, b]^{-1}$, yielding $[b, a, b]^{{2^n}\choose{2}}=1$. Now interchanging the roles of $a$ and $b$ gives $[a, b, a]^{{2^n}\choose{2}} =1$. Thus we obtain $[b, a]^{2n} = 1$, and hence $\e(\gamma_2(\G))\mid 2n$. In Theorem \ref{th:3.9}, we ignore the case $\p = 2$ since the conjecture was proved for groups of class $2$ in \cite{MRJ}.

\begin{theorem}\label{th:3.9}
Let $\p$ be an odd prime and $\G$ be a $\p$-group of nilpotency class $\p+1$. If $\G$ is $\p^n$-central, then $\e( \gamma_2(\G)) \mid \p^n$.
\end{theorem}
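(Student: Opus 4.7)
Fix $a, b \in G$ and let $H = \langle a, b \rangle$; then $H$ has nilpotency class at most $p+1$ and $a^{p^n} \in Z(G) \subseteq Z(H)$, so $[b, a^{p^n}] = 1$. The plan is to apply Lemma~\ref{L:3.6} with $h = b$ to obtain
\[
[_{p-1}a, [b, a]]^{\binom{p^n}{p}} [b, a]^{p^n} = 1.
\]
Set $c := [_{p-1}a, [b, a]]$; then $c \in \gamma_{p+1}(H) \subseteq Z(H)$. Since by Kummer's theorem $v_p\!\bigl(\binom{p^n}{p}\bigr) = n - 1$, writing $\binom{p^n}{p} = p^{n-1} u$ with $\gcd(u, p) = 1$, it will suffice to prove $c^{p^{n-1}} = 1$: once established, $c^{\binom{p^n}{p}} = 1$, so $[b, a]^{p^n} = 1$, and as $a, b$ are arbitrary, $\exp(\gamma_2(G)) \mid p^n$.

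A first estimate on the order of $c$ is essentially free. By the multilinearity of weight-$(p+1)$ commutators (Corollary~\ref{cr:2.9}), for any $g_1, \ldots, g_{p+1} \in G$,
\[
[g_1, \ldots, g_{p+1}]^{p^n} = [g_1^{p^n}, g_2, \ldots, g_{p+1}] = 1,
\]
because $g_1^{p^n} \in Z(G)$. Since $\gamma_{p+1}(G)$ is abelian, this forces $\exp \gamma_{p+1}(G) \mid p^n$, so in particular $c^{p^n} = 1$. This is exactly one factor of $p$ weaker than what is required.

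Closing this $p$-factor gap is the technical heart of the argument. My plan is to generate further relations in the central abelian group $\gamma_{p+1}(H)$ by feeding Lemma~\ref{L:3.6} with other inputs. The symmetric application (the hypothesis being symmetric in $a$ and $b$) produces $[_{p-1}b, [b, a]]^{\binom{p^n}{p}} [b, a]^{p^n} = 1$, and the substitution $a \mapsto ab$ with $h = b$ yields, after multilinear expansion via Corollary~\ref{cr:2.9},
\[
\Bigl( \prod_{\varepsilon \in \{0,1\}^{p-1}} [y_{\varepsilon_1}, \ldots, y_{\varepsilon_{p-1}}, [b, a]] \Bigr)^{\binom{p^n}{p}} [b, a]^{p^n} = 1,
\]
with $y_0 = a$, $y_1 = b$. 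Collapsing the $2^{p-1}$ mixed weight-$(p+1)$ commutators using Hall--Witt/Jacobi identities (valid in the central abelian subgroup $\gamma_{p+1}(H)$) and combining the resulting relations with the dual one should isolate $c^{p^{n-1}}$ as a product of $p^n$-th powers of weight-$(p+1)$ commutators, each trivial by the preceding paragraph. The main obstacle is precisely this combinatorial bookkeeping: one must verify that, modulo $\exp \gamma_{p+1}(H) \le p^n$, the mixed terms conspire to produce exactly the needed extra factor of $p$. This step is the class-$(p+1)$ analogue of the key estimate of \cite{MHM1, PM3} for groups of class $\le p-1$, and carrying it out in detail is where the bulk of the work lies.
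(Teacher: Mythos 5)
Your setup is right and coincides with the paper's: reduce to showing $[b,a]^{p^n}=1$, invoke Lemma~\ref{L:3.6} to get $[\,_{p-1}a,[b,a]]^{\binom{p^n}{p}}[b,a]^{p^n}=1$, and observe that everything comes down to killing $c^{\binom{p^n}{p}}$ for $c=[\,_{p-1}a,[b,a]]$. Your ``free'' estimate $c^{p^n}=1$ via multilinearity of weight-$(p+1)$ commutators is also correct. But the proposal stops exactly where the theorem's content begins: the whole difficulty is the missing factor of $p$ between $c^{p^n}=1$ and $c^{p^{n-1}}=1$, and the paragraph meant to supply it is a plan, not an argument. You assert that after substituting $a\mapsto ab$ the $2^{p-1}$ mixed commutators ``should'' conspire, modulo $\e(\gamma_{p+1}(H))\mid p^n$, to isolate $c^{p^{n-1}}$, and you explicitly flag this bookkeeping as where the bulk of the work lies. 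That bookkeeping \emph{is} the theorem.

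Moreover, the mechanism you sketch is not the one that works. In the paper's proof the substitution $a\mapsto ab$ is iterated $p-1$ times; at each stage the new relation is compared with the old one to cancel the terms of lowest $b$-weight, and an induction on the quantities $\alpha_m$ (Lemma~\ref{L:3.8}) shows that what survives at the end is $[\,_{p-1}b,[b,a]]^{(p-1)!\binom{p^n}{p}}=1$ --- note: the commutator with all the $a$'s replaced by $b$'s, not $c$ itself, and with exponent a unit multiple of $\binom{p^n}{p}$ rather than something divisible by $p^n$. One then reaches $c^{\binom{p^n}{p}}=1$ only by invoking the symmetry $a\leftrightarrow b$ together with $[\,_{p-1}a,[b,a]]=[\,_{p-1}a,[a,b]]^{-1}$; no ``extra factor of $p$'' is ever extracted from the mixed terms, and the relation $c^{p^n}=1$ plays no role. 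A smaller but real gap: from $[b,a]^{p^n}=1$ for all $a,b$ you conclude $\e(\gamma_2(\G))\mid p^n$ ``as $a,b$ are arbitrary,'' but a group generated by elements of order dividing $p^n$ need not have exponent dividing $p^n$. Here one must first note that $\gamma_p(\gamma_2(\G))\subseteq\gamma_{2p}(\G)=1$, so $\gamma_2(\G)$ is regular and, being $p^n$-central, $p^n$-abelian by Lemma~\ref{L:3.1}; only then does the bound on the generators pass to the whole subgroup.
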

\begin{proof} Note that $\gamma_{\p} (\gamma_2(\G))\subset \gamma_{2\p} (\G)\subset \gamma_{\p+2}(\G) = 1$. Thus $\gamma_2(\G)$ is $\p^n$-abelian by Lemma \ref{L:3.1}. Hence it is enough to show $[b, a]^{\p^n} =1$ for every $a, b\in \G$. Let $a, b\in \G$, applying Lemma \ref{L:3.6} we have
\begin{equation}\label{eq:3.9.1} {[\ _{\p-1}a, [b, a]]}^{{\p^n}\choose{\p}} {[b, a]}^{\p^n}=1.
\end{equation}
In the remaining proof we proceed to show ${[\ _{\p-1}a, [b, a]]}^{{\p^n}\choose{\p}}=1$. Towards that end, since $a, b\in \G$ are arbitrary, we keep replacing $a$ with $ab$ starting from \eqref{eq:3.9.1}. Let $S=\{ [x_{\p-1}, x_{\p-2}, \dots, x_1, [b, a]] : x_1, x_2, \dots, x_{\p-1}\in \{a, b\} \}$, where the commutators in $S$ are right normed. For $1\le i\le \p-1$, define $\sigma_i \colon S \longrightarrow \{a, b\}$ as $\sigma_i([x_{\p-1}, x_{\p-2}, \dots, x_1, [b, a]]) = x_i$.
 For $C \in S$, define $S_C = \{ D\in S :  \forall \ 1\le i\le \p-1,\ \mbox{if}\ \sigma_i(C)=b,\ \mbox{then}\ \sigma_i(D)=b\}$.
Observe that all the elements of $S$ commute with one another. Then using Corollary \ref{cr:2.9}, replacing $a$ in $C$ with $ab$ gives $\prod_{D\in S_C} D$.
For $0\le r\le \p-1$, let $S_r = \{ C\in S : \sigma_i (C)=b\, \mbox{for\ exactly}\ r\  \sigma_i,\ 1\le i\le \p-1\}$ and $\E_r \ =\prod_{C\in S_r}C$. Consider $C\in S_r$ and $D\in S_t$. Note that $D\in S_C$ implies $t\ge r$. Moreover for every $D\in S_t$, the number of $C\in S_r$ having $D\in S_C$ is ${{t}\choose{r}}$. Thus replacing $a$  with $ab$ in $\E_r$ gives $\prod_{C\in S_r}(\prod_{D\in S_C}D) =\prod_{t=r}^{\p-1}\E_t^{{{t}\choose{r}}}$. Now we replace $a$ in \eqref{eq:3.9.1} with $ab$ and observe that $[\ _{\p-1}a, [b, a]]=\E_0$ and $[b, ab] = [b, a]$. After this replacement \eqref{eq:3.9.1} becomes
\begin{equation}\label{eq:3.9.2}  
(\prod_{r=0}^{p-1}{\E_r})^{{\p^n}\choose{\p}} {[b, a]}^{\p^n}=1.
\end{equation}

Comparing \eqref{eq:3.9.1} with \eqref{eq:3.9.2} yields 
\begin{equation}\label{eq:3.9.3} \prod_{r=1}^{\p-1}{\E_r}^{{\p^n}\choose{\p}}=1.
\end{equation}
By replacing $a$ in \eqref{eq:3.9.3} with $ab$, we obtain 
\begin{equation}\label{eq:3.9.4} \E_{\p-1}^{{\p^n}\choose{\p}}\prod_{r=1}^{\p-2}\big(\prod_{t=r}^{\p-1}{\E_t}^{{t}\choose{r}}\big)^{{\p^n}\choose{\p}}=1. 
\end{equation}
By comparing \eqref{eq:3.9.3} and \eqref{eq:3.9.4}, we obtain $\prod_{r=1}^{\p-2}\big(\prod_{t=r+1}^{\p-1}{\E_t}^{{t}\choose{r}}\big)^{{\p^n}\choose{\p}}=1$. Now rearranging $\prod_{r=1}^{\p-2}\big(\prod_{t=r+1}^{\p-1}{\E_t}^{{t}\choose{r}}\big)$ as $\prod_{t=2}^{\p-1}\big(\prod_{r=1}^{t-1}{\E_t}^{{t}\choose{r}}\big)$ gives
\begin{equation}\label{eq:3.9.5} \prod_{t=2}^{\p-1}({\E_t}^{\sum_{r=1}^{t-1} {{t}\choose{r}}})^{{\p^n}\choose{\p}}=1.
\end{equation}
We claim that for $2\le m\le \p-1$,
\begin{equation}\label{eq:3.9.6} \prod_{k=m}^{\p-1}\big({\E_k}^{\alpha_m(k)}\big)^{{\p^n}\choose{\p}}=1. 
\end{equation}
Observe that $\sum_{r=1}^{t-1} {{t}\choose{r}} = \alpha_2(t)$, and hence \eqref{eq:3.9.5} becomes $\prod_{t=2}^{\p-1}\big({\E_t}^{\alpha_2(t)}\big)^{{\p^n}\choose{\p}}=1$. Now we proceed by induction on $m$. For $2 < m\le \p-1$, assuming \eqref{eq:3.9.6} for $m-1$ yields

\begin{equation}\label{eq:3.9.7}
 \prod_{k=m-1}^{\p-1}\big({\E_k}^{\alpha_{m-1}(k)}\big)^{{\p^n}\choose{\p}}=1.
\end{equation}

By replacing $a$ with $ab$ in \eqref{eq:3.9.7}, we get

\begin{equation}\label{eq:3.9.8} 
\big(\E_{\p-1}^{\alpha_{m-1}(\p-1)}\big)^{{\p^n}\choose{\p}}\prod_{k=m-1}^{\p-2}\big(\prod_{j=k}^{\p-1}{\E_j}^{{j}\choose{k}}\big)^{\alpha_{m-1}(k){{\p^n}\choose{\p}}}=1. 
\end{equation}

Now comparison of \eqref{eq:3.9.7} and \eqref{eq:3.9.8} gives $\prod_{k=m-1}^{\p-2}\big(\prod_{j=k+1}^{\p-1}{\E_j}^{{j}\choose{k}}\big)^{\alpha_{m-1}(k) {{\p^n}\choose{\p}}}=1$. Rearranging  $\prod_{k=m-1}^{\p-2}\big(\prod_{j=k+1}^{\p-1}{\E_j}^{{j}\choose{k}}\big)^{\alpha_{m-1}(k)}$ as $\prod_{j=m}^{\p-1}\big(\prod_{k=m-1}^{j-1}{\E_j}^{{{j}\choose{k}} \alpha_{m-1}(k)}\big)$ and using Lemma \ref{L:3.8}, we obtain

\begin{equation}\label{eq:3.9.9}
\prod_{j=m}^{\p-1}\big(\E_j^{\alpha_m(j)}\big)^{{\p^n}\choose{\p}}=1.
\end{equation}
This proves the claim. Setting $m=p-1$ in \eqref{eq:3.9.6} yields $\E_{\p-1}^{\alpha_{\p-1} (\p-1){{\p^n}\choose{\p}}}=1$. Note that $\alpha_{\p-1} (\p-1)=(\p-1)!$ is relatively prime to $\p$, so
 
\begin{equation} \label{eq:3.9.10} \E_{\p-1}^{{\p^n}\choose{\p}}=1.
\end{equation}
 
We have $\E_{\p-1}=[\ _{\p-1}b, [b, a]]$, and interchanging $a$ and $b$ in \eqref{eq:3.9.10} gives $[\ _{\p-1}a, [a, b]]^{{\p^n}\choose{\p}} =1$. Since $[\ _{\p-1}a, [b, a]]={[\ _{\p-1}a, [a, b]]}^{-1}$, we obtain $[\ _{\p-1}a, [b, a]]^{{\p^n}\choose{\p}}=1$, proving the theorem.
\end{proof}

Keeping the notations as in the proof of Theorem \ref{th:3.9}, we give an illustration of Theorem \ref{th:3.9} by taking the special case $\p = 5$. 

\begin{example} \label{E:3.10}
\begin{align*} \E_0 =& [\ _4a, [b, a]], \\ \E_1 =& [b, a, a, a, [b, a]] [a, b, a, a, [b, a]] [a, a, b, a, [b, a]] [a, a, a, b, [b, a]],\\ \E_2 = &[b, b, a, a, [b, a]] [b, a, b, a, [b, a]] [a, b, b, a, [b, a]] \\  & [b, a, a, b, [b, a]] [a, b, a, b, [b, a]] [a, a, b, b, [b, a]], \\ \E_3 =& [b, a, a, a, [b, a]] [a, b, a, a, [b, a]] [a, a, b, a, [b, a]] [a, a, a, b, [b, a]], \\ \E_4 = &[\ _4b, [b, a]].
\end{align*}
Replacing $a$ in $\E_0$ with $ab$ gives $\E_0\E_1\E_2\E_3\E_4$. Similarly replacing $a$ in $\E_1, \E_2, \E_3, \E_4$ with $ab$ gives $\E_1\E_2^2\E_3^3\E_4^4$, $\E_2\E_3^3\E_4^6$, $\E_3\E_4^4$, $\E_4$ respectively. By \eqref{eq:3.9.1} we have
  \begin{equation} \label{eq:3.10.1} \E_0^{{{5^n}\choose{5}}} [b, a]^{5^n} = 1. 
 \end{equation} 
 
Replacement of $a$ in \eqref{eq:3.10.1} with $ab$ gives 
\begin{equation} \label{eq:3.10.2}(\E_0\E_1\E_2\E_3\E_4)^{{{5^n}\choose{5}}} [b, a]^{5^n}= 1.
\end{equation}

 Comparing \eqref{eq:3.10.1} and \eqref{eq:3.10.2} yields 
 \begin{equation} \label{eq:3.10.3} (\E_1\E_2\E_3\E_4)^{{{5^n}\choose{5}}} = 1.
 \end{equation} 
 Again replacement of $a$ in \eqref{eq:3.10.3} with $ab$ gives 
 \begin{equation} \label{eq:3.10.4} \big( (\E_1\E_2^2\E_3^3\E_4^4) (\E_2\E_3^3\E_4^6) (\E_3\E_4^4) (\E_4) \big)^{{{5^n}\choose{5}}} = 1.
 \end{equation}
 
 By comparing \eqref{eq:3.10.3} and \eqref{eq:3.10.4}, we obtain $\big( (\E_2^2\E_3^3\E_4^4) (\E_3^3\E_4^6) (\E_4^4) \big)^{{{5^n}\choose{5}}} = 1$. Thus
 \begin{equation} \label{eq:3.10.5}
 \big(\E_2^2\E_3^6\E_4^{14}\big)^{{{5^n}\choose{5}}} = 1.
 \end{equation} 
 
 Again replacement of $a$ in \eqref{eq:3.10.5} with $ab$ yields, 
 \begin{equation}\label{eq:3.10.6}
 \big((\E_2\E_3^3\E_4^6)^2 (\E_3\E_4^4)^6 \E_4^{14}\big)^{{{5^n}\choose{5}}} = 1.
 \end{equation}
 
 Now by comparing \eqref{eq:3.10.5} and \eqref{eq:3.10.6} we get $\big( (\E_3^3\E_4^6)^2 (\E_4^4)^6 \big)^{{{5^n}\choose{5}}} = 1$. This gives 
 \begin{equation}\label{eq:3.10.7}
 \big(\E_3^6\E_4^{36}\big)^{{{5^n}\choose{5}}} = 1.
 \end{equation}
 
 After replacing $a$ in \eqref{eq:3.10.7} with $ab$, \eqref{eq:3.10.7} becomes
 \begin{equation} \label{eq:3.10.8}
 \big( (\E_3\E_4^4)^6 \E_4^{36}\big)^{{{5^n}\choose{5}}}=1.
 \end{equation} 
 
 Finally comparing \eqref{eq:3.10.7} and \eqref{eq:3.10.8} yields
 \begin{equation} \label{eq:3.10.9}\big(\E_4^{24}\big)^{{{5^n}\choose{5}}} = 1.
 \end{equation}
 
The values of $\alpha_m(n)$ for $1\le m< n\le4$ are listed below:

\begin{align*}&\alpha_2(2) = {{2}\choose{1}} = 2,\  \alpha_2(3) = {{3}\choose {1}}+{{3}\choose{2}} =6,\ \alpha_2(4) = {{4}\choose{1}}+{{4}\choose{2}}+{{4}\choose{3}} =14,\\ & \alpha_3(3) = {{3}\choose {2}} {{2}\choose {1}}= 6,\ \alpha_3(4) = {{4}\choose {2}}{{2}\choose {1}}+{{4}\choose{3}}{{3}\choose {1}}+{{4}\choose {3}}{{3}\choose {2}} = 36,\\ & \alpha_4(4) = {{4}\choose {3}}{{3}\choose {2}}{{2}\choose {1}} = 24.
\end{align*}

From $\eqref{eq:3.10.5}, \eqref{eq:3.10.7}$ and $\eqref{eq:3.10.9}$, we can see that $\prod_{k=m}^{4}\big({\E_k}^{\alpha_m(k)}\big)^{{5^n}\choose{5}}=1$ for $m = 2, 3, 4$.
\end{example}

\begin{theorem}\label{th:3.11}
Let $\p$ be an odd prime and $\G$ be a finite $\p$-group. If the nilpotency class of $\G$ is at most $\p$, then $\e(\G \wedge \G)\mid \e(\G)$. In particular, $\e(\M)\mid \e(\G)$.
\end{theorem}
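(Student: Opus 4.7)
The plan is to reduce the statement to bounding the exponent of the commutator subgroup of a Schur cover, so that Theorem 3.9 can be applied. Let $\G^*$ be a Schur (stem) cover of $\G$, fitting in a central extension
\begin{equation*}
1 \to \M(\G) \to \G^* \to \G \to 1
\end{equation*}
with $\M(\G) \subseteq \ZZ(\G^*) \cap \gamma_2(\G^*)$. The classical identification coming from the theory of nonabelian exterior squares (Brown--Loday) gives $\G\wedge \G \cong \gamma_2(\G^*)$, so it is enough to bound $\e(\gamma_2(\G^*))$.

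First I would check that $\G^*$ is a finite $\p$-group of class at most $\p+1$ with the right central quotient. Since $\gamma_{\p+1}(\G^*)$ maps onto $\gamma_{\p+1}(\G) = 1$, we get $\gamma_{\p+1}(\G^*) \subseteq \M(\G) \subseteq \ZZ(\G^*)$, hence $\gamma_{\p+2}(\G^*)=1$, i.e.\ the class of $\G^*$ is at most $\p+1$. Moreover, $\M(\G) \subseteq \ZZ(\G^*)$ means the surjection $\G^* \twoheadrightarrow \G^*/\ZZ(\G^*)$ factors through $\G = \G^*/\M(\G)$, so $\e(\G^*/\ZZ(\G^*)) \mid \e(\G)$. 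Writing $\e(\G)=\p^n$, this says $\G^*$ is $\p^n$-central.

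The proof then splits into two cases according to the class of $\G^*$. If the class of $\G^*$ is at most $\p$, then Corollary 3.3 applied to $\G^*$ yields $\e(\gamma_2(\G^*)) \mid \e(\G^*/\ZZ(\G^*)) \mid \p^n$. If the class of $\G^*$ is exactly $\p+1$, then Theorem 3.9 applies verbatim to $\G^*$ and gives $\e(\gamma_2(\G^*)) \mid \p^n$. In either case $\e(\G\wedge \G) = \e(\gamma_2(\G^*)) \mid \e(\G)$; the statement $\e(\M)\mid \e(\G)$ follows because the short exact sequence $1 \to \M \to \G\wedge \G \to \gamma_2(\G) \to 1$ realizes $\M$ as a subgroup of $\G\wedge \G$.

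The step I expect to be most delicate is invoking the identification $\G\wedge \G \cong \gamma_2(\G^*)$ for a stem cover and ensuring that the central extension really yields $\G^*$ with class at most $\p+1$; both rely essentially on the property $\M(\G) \subseteq \ZZ(\G^*)\cap \gamma_2(\G^*)$ of a Schur cover. Once that is in place, the entire argument is simply a dispatch to Corollary 3.3 or to the main Theorem 3.9 of the previous section, with no further computation required.
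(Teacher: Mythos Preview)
Your proposal is correct and is essentially the paper's own proof: take a Schur cover $\G^*$, observe that it has class at most $\p+1$ and that $\e(\G^*/\ZZ(\G^*))\mid \e(\G)$, then use the identification $\G\wedge \G\cong \gamma_2(\G^*)$ together with Theorem~3.9. Your explicit case split (class $\le \p$ via Corollary~3.3 versus class $=\p+1$ via Theorem~3.9) is slightly more careful than the paper, which just invokes Theorem~3.9 directly for class at most $\p+1$, but the argument is the same.
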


\begin{proof}
Let $\HH$ be a Schur cover for $\G$. Since $\HH$ is a central extension of $\G$, the nilpotency class of $\HH$ is at most $\p+1$. Thus by Theorem \ref{th:3.9}, $\e(\gamma_2(\HH))\mid \e(\HH/\ZZ(\HH))\mid \e(\G)$. The theorem now follows by noting that $\G \wedge \G \cong \gamma_2(\HH)$.
\end{proof}

\section{Validity of the Conjecture for $\p$-groups of nilpotency class at most 5 and of odd exponent.}
The author of \cite{PM5} has listed basic commutators in $a, b$ of weight at most $6$ and their respective powers arising in the collection formula. The same collection formula, when we follow left notations is given in $(i)$ of the next lemma.
\begin{lemma}\label{L:4.1}
\begin{itemize}
\item[$(i)$] Let $\G$ be a group of nilpotency class $5$, $a, b\in \G$. Then for all $n\in \mathbb{N}$,
\begin{align*}
(ab)^n =& [[b, a], a, b, a]^{6{{n}\choose{3}}+18{{n}\choose{4}}+12{{n}\choose{5}}} [[b, a], b, b, a]^{{{n}\choose{3}}+7{{n}\choose{4}}+6{{n}\choose{5}}} \\& [a, a, a, b, a]^{3{{n}\choose{4}}+4{{n}\choose{5}}} [a, a, b, b, a]^{{{n}\choose{3}}+6{{n}\choose{4}}+6{{n}\choose{5}}} [a, b, b, b, a]^{3{{n}\choose{4}}+4{{n}\choose{5}}} \\& [b, b, b, b, a]^{{{n}\choose{5}}}  [a, a, b, a]^{2{{n}\choose{3}}+3{{n}\choose{4}}} [a, b, b, a]^{2{{n}\choose{3}}+3{{n}\choose{4}}} \\ & [b, b, b, a]^{{{n}\choose{4}}} [a, b, a]^{{{n}\choose{2}}+2{{n}\choose{3}}} [b, b, a]^{{{n}\choose{3}}} [b, a]^{{{n}\choose{2}}} a^n b^n.
\end{align*}
\item[$(ii)$] Let $\G$ be a group of nilpotency class $6$, $a, b\in \G$. Suppose $a\in \gamma_2(\G)$, then for all $n\in \mathbb{N}$,
\begin{align*}
(ab)^n =& [b, b, b, b, a]^{{{n}\choose{5}}} [a, b, b, a]^{2{{n}\choose{3}}+3{{n}\choose{4}}} [b, b, b, a]^{{{n}\choose{4}}} \\ & [a, b, a]^{{{n}\choose{2}}+2{{n}\choose{3}}} [b, b, a]^{{{n}\choose{3}}} [b, a]^{{{n}\choose{2}}} a^n b^n.
\end{align*}
\item[$(iii)$] Let $\G$ be a group of nilpotency class $6$, $a, b\in \G$. Then for all $n\in \mathbb{N}$, 
\begin{align*}
[b, a^n] =& [a, a, a, a, b, a]^{{{n}\choose{5}}} [[b, a], a, a, b, a]^{2{{n}\choose{3}}+3{{n}\choose{4}}} [a, a, a, b, a]^{{{n}\choose{4}}}\\ & [[b, a], a, b, a]^{{{n}\choose{2}}+2{{n}\choose{3}}}[a, a, b, a]^{{{n}\choose{3}}} [a, b, a]^{{{n}\choose{2}}} [b, a]^n.
\end{align*}
\end{itemize}
\end{lemma}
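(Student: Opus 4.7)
The plan is to derive all three formulas from the Hall collection formula (Theorem \ref{th:2.11} and Lemma \ref{L:2.12}), with (i) done by direct enumeration and (ii), (iii) obtained by specialization and a short change of variables. In part (i), since $\G$ has nilpotency class $5$, every basic commutator in $\{a, b\}$ of weight at least $6$ is trivial, so Lemma \ref{L:2.12} with $i = 5$ yields the exact identity
\[
(ab)^n \;=\; \prod_{C \in \mathcal{A}_5(\{a, b\})} C^{f_C(n)} \cdot a^n b^n
\]
once an ordering on the basic commutators is fixed. The plan is to list the twelve basic commutators of weight $2, 3, 4, 5$ in $\{a, b\}$ (one, two, three, and six of each weight respectively, by Witt's formula) and to read off each exponent $f_C(n)$. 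Remarks \ref{R:2:13} and \ref{R:2.14} immediately give the exponents of the commutators of the form $[\,_t b, a]$ as ${n \choose t+1}$; the remaining coefficients, such as $2{n \choose 3} + 3{n \choose 4}$ for $[a, a, b, a]$, are obtained by running the Hall collection procedure and can be cross-checked against the tabulation in \cite{PM5}, with the only adjustment being the passage from right-normed to left-normed convention.

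For part (ii), the plan is to invoke (i) and cull the list using the elementary observation that if $a \in \gamma_2(\G)$, then a basic commutator of formal weight $k$ in $\{a, b\}$ with $s$ occurrences of $a$ actually lies in $\gamma_{k+s}(\G)$, since each occurrence of $a$ contributes one extra step to the lower central series depth. In a group of class $6$, any such commutator with $k + s \ge 7$ vanishes. A sweep through the list of part (i) shows that only $[b, a]$, $[b, b, a]$, $[a, b, a]$, $[b, b, b, a]$, $[a, b, b, a]$, and $[b, b, b, b, a]$ survive, and their exponents are inherited verbatim from (i). One still has to verify that the jump from class $5$ to class $6$ introduces no new basic commutators of formal weight $6$, but any such commutator in $\{a, b\}$ contains at least one $a$, hence has actual weight at least $7$, and therefore vanishes.

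For part (iii), the plan is to reduce to (ii) via the elementary identity
\[
[b, a^n] \;=\; b a^n b^{-1} \cdot a^{-n} \;=\; (bab^{-1})^n a^{-n} \;=\; \bigl([b, a] \cdot a\bigr)^n a^{-n}.
\]
Writing $x = [b, a]$ and $y = a$, one has $x \in \gamma_2(\G)$ and $\G$ of class $6$, so (ii) applies to $(xy)^n$. Substituting back, the right-normed identities $[a, [b, a]] = [a, b, a]$, $[a, a, [b, a]] = [a, a, b, a]$, $[[b, a], a, [b, a]] = [[b, a], a, b, a]$, and their obvious extensions convert every commutator in $\{x, y\}$ into the corresponding commutator in $\{a, b\}$; cancelling the trailing $a^n \cdot a^{-n}$ then produces the formula claimed in (iii).

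The main obstacle is the combinatorial bookkeeping in part (i): while the list of surviving basic commutators of weight at most $5$ is short, verifying the exact integer coefficients $f_C(n)$ requires care in executing Hall's collection procedure, which is why I lean on the tabulation in \cite{PM5}. Parts (ii) and (iii) are then essentially formal, being respectively a vanishing argument on the lower central series and a one-line change of variables followed by cancellation.
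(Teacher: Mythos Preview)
Your proposal is correct and follows essentially the same approach as the paper: part~(i) is taken from the tabulation in \cite{PM5} (adjusted for left notation), part~(ii) is deduced from (i) by discarding the commutators whose depth in the lower central series becomes at least~$7$ once $a\in\gamma_2(\G)$, and part~(iii) is obtained from~(ii) via the identity $[b,a^n]=([b,a]\,a)^n a^{-n}$. The paper states only that ``(ii) follows from (i)'' without elaboration; your explicit weight count $k+s$ and your check that no formal-weight-$6$ basic commutator survives are exactly the content hidden behind that sentence.
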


\begin{proof}
Note that $(ii)$ follows from $(i)$ and for proving $(iii)$, we begin by writing $[b, a^n]$ as $([b, a] a)^n a^{-n}$. Now applying $(ii)$ to $([b, a]a)^n$ yields $(iii)$.
\end{proof}

\begin{lemma}\label{L:4.2}
Let $\G$ be a $3$-group of class $6$ and $a,b\in \G$. If $n>1$ and the order of $a$ modulo the center is $3^n$, then
\begin{itemize}
\item[$(i)$] $[a^3, a^3, a^3, a^3, b, a^3]^{3^{n-2}} =1$.
\item[$(ii)$] $[a^3, a^3, a^3, b, a^3]^{3^{n-2}} =1$.
\item[$(iii)$] $[a^3, a^3, b, a^3]^{3^{n-1}} =1$.
\item[$(iv)$] $[a^3, b, a^3]^{3^{n-1}} =1$.
\item[$(v)$] $[[b, a^3], a^3, b, a^3]^{3^{n-2}} =1$.
\item[$(vi)$] $[a^3, a^3, b, a^3]^{3^{n-2}} =1$.
\item[$(vii)$] $[[b, a^3], a^3, a^3, b, a^3]^{3^{n-2}} =1$.
\end{itemize}
\end{lemma}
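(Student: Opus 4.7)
The idea is to apply the collection identity of Lemma~\ref{L:4.1}$(iii)$ with $a$ replaced by $a^3$, evaluated at $m = 3^{n-1}$. Since $a^{3^n}\in Z(G)$ the left-hand side vanishes, $[b,(a^3)^{m}] = [b,a^{3^n}] = 1$, yielding
\begin{align*}
1 =\ & [a^3, a^3, a^3, a^3, b, a^3]^{\binom{m}{5}}\, [[b, a^3], a^3, a^3, b, a^3]^{2\binom{m}{3}+3\binom{m}{4}} \\
& \cdot [a^3, a^3, a^3, b, a^3]^{\binom{m}{4}}\, [[b, a^3], a^3, b, a^3]^{\binom{m}{2}+2\binom{m}{3}} \\
& \cdot [a^3, a^3, b, a^3]^{\binom{m}{3}}\, [a^3, b, a^3]^{\binom{m}{2}}\, [b, a^3]^m.
\end{align*}
The seven higher-weight commutators appearing here are precisely those whose powers are to be bounded in statements $(i)$--$(vii)$. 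By Kummer's theorem, $v_3\bigl(\binom{3^{n-1}}{k}\bigr) = (n-1) - v_3(k)$, hence every exponent has $3$-adic valuation at least $n-2$, with equality attained exactly at the three terms carrying $\binom{m}{3}$---these correspond to statements $(vii)$, $(v)$, and $(vi)$.

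The plan is to extract the seven identities in decreasing weight, exploiting $\gamma_6(G)\subseteq Z(G)$, $\gamma_5(G)$ abelian (since $[\gamma_5,\gamma_5]\subseteq\gamma_{10}=1$), and the commutation relation $[\gamma_5(G),\gamma_2(G)]\subseteq\gamma_7(G)=1$. For $(i)$, the weight-$6$ commutator $[a^3, a^3, a^3, a^3, b, a^3]$ is central and, via the weight-$c$ multilinearity of Corollary~\ref{cr:2.9}, equals $[a,a,a,a,b,a]^{3^5}$; applying the same multilinearity to the identity $[a,a,a,a,b,a^{3^n}] = 1$ (which holds since $a^{3^n}\in Z(G)$ forces $[b,a^{3^n}]=1$) gives $[a,a,a,a,b,a]^{3^n}=1$, whence $(i)$ follows. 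Statement $(vii)$ is proved analogously, expanding also the inner $[b,a^3]$ by the same multilinearity. For the weight-$5$ statements $(ii)$ and $(v)$, the same strategy works modulo $\gamma_6(G)$, with the residual $\gamma_6$-error controlled by $(i)$ and $(vii)$; iterated applications of Lemma~\ref{L:2.10}$(iii)$, whose hypothesis $i+3j\ge 7$ is easily met for these high-weight commutators, carry the bounds back to the order of $a$ modulo the centre. Substituting these four vanishings into the collection identity cancels all weight-$\ge 5$ factors, leaving
\[
[a^3,a^3,b,a^3]^{\binom{m}{3}}\,[a^3,b,a^3]^{\binom{m}{2}}\,[b,a^3]^m \equiv 1
\]
modulo already-killed terms, from which $(vi)$---and therefore the weaker $(iii)$---is extracted by comparing $3$-adic valuations of the exponents.

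The main obstacle is expected to be $(iv)$: in the displayed relation both $\binom{m}{2}$ and $m$ have $3$-adic valuation exactly $n-1$, so one cannot immediately force $[a^3,b,a^3]^{3^{n-1}} = 1$ without independently controlling $[b,a^3]^{3^{n-1}}$. To resolve this, we apply Lemma~\ref{L:2.10}$(iii)$ to the commutator $[a^3,[b,a^3]]$, where the hypothesis $i+3j = 1+6 = 7$ holds with equality, obtaining that the order of $[a^3,b,a^3]$ is at most the order of $[b,a^3]$ modulo $Z(G)$. A further application of Lemma~\ref{L:4.1}$(iii)$ (this time directly, without the $a\mapsto a^3$ substitution) controls $[b,a^3]^{3^{n-1}}$ modulo central corrections arising from the six identities already established, and combining the two yields $(iv)$.
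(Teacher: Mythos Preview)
Your overall plan has a structural gap that the paper avoids by a different logical ordering.

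\textbf{The extraction of $(vi)$ fails.} After you cancel the four weight-$\ge 5$ factors, you are left with a \emph{single} relation
\[
[a^3,a^3,b,a^3]^{\binom{m}{3}}\,[a^3,b,a^3]^{\binom{m}{2}}\,[b,a^3]^{m}=1,\qquad m=3^{n-1},
\]
among three independent commutators. Comparing $3$-adic valuations of the exponents does not isolate any one of them: with $v_3\binom{m}{3}=n-2$ and $v_3\binom{m}{2}=v_3(m)=n-1$, the relation is compatible with none of the three factors vanishing individually. No filtration argument helps either, since the three commutators lie in $\gamma_4,\gamma_3,\gamma_2$ respectively and passing to successive quotients only recovers trivial information.

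\textbf{The argument for $(iv)$ is circular.} Your application of Lemma~\ref{L:2.10}$(iii)$ to $[a^3,[b,a^3]]$ (with $i=1$, $j=2$) correctly bounds its order by the order of $[b,a^3]$ \emph{modulo the centre}. But showing $[b,a^3]^{3^{n-1}}\in Z(G)$ amounts to proving $[g,[b,a^3]^{3^{n-1}}]=1$ for every $g$, and expanding this via Lemma~\ref{L:2.10}$(ii)$ brings back precisely the commutators of types $(iv)$ and $(v)$ with $b$ replaced by $g$. The ``further application of Lemma~\ref{L:4.1}$(iii)$ without the substitution'' that you invoke produces relations in $[b,a]$, not in $[b,a^3]$, and does not close this loop.

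\textbf{How the paper proceeds instead.} The paper does \emph{not} rely on the single collection identity for $[b,(a^3)^{m}]$. It generates independent relations by expanding the \emph{leftmost} slot via Lemma~\ref{L:2.10}$(i)$: from $[(a^3)^{3^{n-1}},a^3,b,a^3]=1$ one obtains $(iii)$ using only $(i)$ and $(ii)$; from $[(a^3)^{3^{n-1}},b,a^3]=1$ one then obtains $(iv)$ using $(i)$, $(ii)$, $(iii)$. With $(iv)$ in hand, the remaining statements $(v)$, $(vi)$, $(vii)$ follow easily via Lemma~\ref{L:2.10}$(iv)$, since the order of $[a^3,b,a^3]$ is now known. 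Thus the paper's dependency order is $(i)\to(ii)\to(iii)\to(iv)\to\{(v),(vi),(vii)\}$, essentially the reverse of yours; the pivot is that $(iv)$ must be established \emph{before} $(v)$--$(vii)$, not after. The collection identity you wrote down is used only afterwards, in Theorem~\ref{th:4.3}, where Lemma~\ref{L:4.2} kills all six correction terms at once.
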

\begin{proof}
\begin{itemize}
\item[$(i)$] Applying Lemma \ref{L:2.8} twice yields $[a^3, a^3, a^3, a^3, b, a^3]^{3^{n-2}}$ = $([a^3, a, a^3, a^3, b, a^3]^3)^{3^{n-2}}$ $= [a^{3^n}, a, a^3, a^3, b, a^3] =1$.
\item[$(ii)$] Expanding $[b, a^3]$ in $[a^3, a^3, a^3, b, a^3]$ by Lemma \ref{L:4.1}$(iii)$, and then using Lemma \ref{L:2.8}, we obtain $[a^3, a^3, a^3, b, a^3] = [a^3, a^3, a^3, [a, b, a]]^3 [a^3, a^3, a^3, [b, a]]^3$. Since $a^{3^n}\in \Z$, Lemma \ref{L:2.10}$(iii)$ yields $[a^3, a^3, a^3, [b, a]]^{3^{n-1}}$ = $[a^3, a^3, a^3, [a, b, a]]^{3^{n-1}} =1$. Hence by \eqref{eq:2.7.4}, we have $[a^3, a^3, a^3, b, a^3]^{3^{n-2}} =1$.
\item[$(iii)$] We have $[a^{3^n}, a^3, b, a^3] = 1$. Now expanding $[(a^3)^{3^{n-1}}, a^3, b, a^3]$ by Lemma \ref{L:2.10}$(i)$ yields $[a^3, a^3, a^3, a^3, b, a^3]^{{{3^{n-1}}\choose{3}}} [a^3, a^3, a^3, b, a^3]^{{{3^{n-1}}\choose{2}}} [a^3, a^3, b, a^3]^{3^{n-1}} = 1$. Hence it is clear from $(i)$ and $(ii)$ that $[a^3, a^3, b, a^3]^{3^{n-1}} =1$.
\item[$(iv)$] Since $[a^{3^n}, b, a^3] = 1$, expanding $[(a^3)^{3^{n-1}}, b, a^3]$ by Lemma \ref{L:2.10}$(i)$ gives $[a^3, a^3, a^3, a^3, b, a^3]^{{{3^{n-1}}\choose{4}}} [a^3, a^3, a^3, b, a^3]^{{{3^{n-1}}\choose{3}}} [a^3, a^3, b, a^3]^{{{3^{n-1}}\choose{2}}} [a^3, b, a^3]^{3^{n-1}} = 1$. Now the result follows from $(i)$, $(ii)$ and $(iii)$.
\item[$(v)$] Expanding the left most $[b, a^3]$ in $[[b, a^3], a^3, b, a^3]$ by Lemma \ref{L:4.1}$(iii)$, and then applying Lemma \ref{L:2.8} yields $[[b, a^3], a^3, b, a^3] = [[a, b, a], a^3, b, a^3]^3 [[b, a], a^3, b, a^3]^3$. Since $[a^3, b, a^3]^{3^{n-1}} = 1$, by Lemma \ref{L:2.10}$(iv)$, $[[a, b, a], a^3, b, a^3]$ and $[[b, a], a^3, b, a^3]$ have orders at most $3^{n-1}$. Now \eqref{eq:2.7.4} yields $[[b, a^3], a^3, b, a^3]^{3^{n-2}} =1$.
\item[$(vi)$] Applying Lemma \ref{L:2.10}$(i)$ to $[a^3, a^3, b, a^3]$ gives $[a^3, a^3, b, a^3]$ = $[a, a, a, a^3, b, a^3]$ $[a, a, a^3, b, a^3]^3 [a, a^3, b, a^3]^3$. Since $[a^3, b, a^3]^{3^{n-1}} = 1$, by Lemma \ref{L:2.10}$(iv)$, $[a, a, a^3, b, a^3]$ and $[a, a^3, b, a^3]$ have orders at most $3^{n-1}$. Further using Lemma \ref{L:2.8}, we have $[a, a, a, a^3, b, a^3]^{3^{n-2}}$ $= ([a, a, a, a, b, a]^9)^{3^{n-2}}$ $ = [a^{3^n}, a, a, a, b, a] = 1$. Hence \eqref{eq:2.7.4} yields $[a^3, a^3,b, a^3]^{3^{n-2}} = 1$.
\item[$(vii)$] Since $[a^3, a^3, b, a^3]^{3^{n-2}} = 1$, Lemma \ref{L:2.10}$(iv)$ gives $[[b, a^3], a^3, a^3, b, a^3]^{3^{n-2}} = 1$.
\end{itemize}
\end{proof}

\begin{theorem}\label{th:4.3}
Let $\G$ be a $3$-group of class $6$, $a, b\in \G$ and $c\in \gamma_2(\G)$. 
\begin{itemize}
\item[$(i)$] If $a^{3^n}\in \Z$, then $[b, a^3]^{3^{n-1}} = 1$.
\item[$(ii)$] If $a^{3^n}\in \Z$ and $c^{3^{n-1}} = 1$, then $([b, a^3]c)^{3^{n-1}} = 1$.
\end{itemize}
\end{theorem}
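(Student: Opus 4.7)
For (i), start from $[b, a^{3^n}] = 1$ (because $a^{3^n} \in \Z$) and rewrite this as $[b, (a^3)^{3^{n-1}}]$; then apply Lemma~\ref{L:4.1}(iii) with $a$ replaced by $a^3$ and the free exponent set to $3^{n-1}$. The resulting expansion is a product of commutators in $a^3, b$ each raised to a binomial coefficient $\binom{3^{n-1}}{k}$. The plan is to match each of these factors against one of the items (i)--(vii) of Lemma~\ref{L:4.2}, using a direct $3$-adic count — for example $\binom{3^{n-1}}{5}$ and $\binom{3^{n-1}}{2}$ are divisible by $3^{n-1}$, while $\binom{3^{n-1}}{3}$ and $\binom{3^{n-1}}{4}$ are divisible by $3^{n-2}$ — to show that every factor except $[b, a^3]^{3^{n-1}}$ collapses, forcing $[b, a^3]^{3^{n-1}} = 1$.

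For (ii), after disposing of $n = 1$ trivially, set $A = [b, a^3]$ and $B = c$; both lie in $\gamma_2(\G)$ with $A^{3^{n-1}} = B^{3^{n-1}} = 1$ by (i) and the hypothesis. The plan is to expand $(AB)^{3^{n-1}}$ via Lemma~\ref{L:4.1}(ii) (with $A$ in the role of $a \in \gamma_2$) and kill every surviving factor. Because $B \in \gamma_2(\G)$ as well, all weight-$\geq 4$ commutators in $A, B$ land in $\gamma_8(\G) = 1$, so the formula collapses to
\[
(AB)^{3^{n-1}} = [A, B, A]^{e_1}\,[B, B, A]^{e_2}\,[B, A]^{e_3},
\]
with $e_1 = \binom{3^{n-1}}{2} + 2\binom{3^{n-1}}{3}$, $e_2 = \binom{3^{n-1}}{3}$ and $e_3 = \binom{3^{n-1}}{2}$. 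Lemma~\ref{L:2.10}(iv) (applicable because $3 \cdot 2 + 2 \geq 7$) combined with $A^{3^{n-1}} = 1$ gives $[B, A]^{3^{n-1}} = 1$, and since $3^{n-1} \mid e_3$ the third factor is trivial.

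The core obstacle is the two $\gamma_6(\G)$-terms: the naive order bound from Lemma~\ref{L:2.10} is only $3^{n-1}$ for both $[A, B, A]$ and $[B, B, A]$, while $e_1, e_2$ are only $3^{n-2}$-divisible, leaving the exponent count one factor of $3$ short. To recover this factor, I would unfold $A = [b, a^3]$ via Lemma~\ref{L:4.1}(iii): modulo $\gamma_7(\G) = 1$, $A = X_2 X_3 X_4 X_5 X_6$ with $X_i \in \gamma_i(\G)$ and $X_2 = [b, a]^3$. Iterating $[c, PQ] = [c, P] \cdot {}^P[c, Q]$ (the conjugation corrections vanishing by weight) together with Lemma~\ref{L:2.10}(ii) applied to $[c, [b, a]^3]$ would produce
\[
[c, A] = [u, c, u]^3\,[c, u]^3\,[c, [a, b, a]]^3\,[c, [a, a, b, a]], \qquad u = [b, a],
\]
modulo $\gamma_7 = 1$. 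Feeding this into $[A, [c, A]]$ annihilates every factor of weight $\geq 5$ against $A \in \gamma_2$, leaving $[A, B, A] = [[b, a]^3, [c, u]^3]$. Since $[[b, a], [c, u]] \in \gamma_6(\G) \subset \Z$ the subgroup $\langle [b, a], [c, u]\rangle$ is nilpotent of class $\leq 2$, and the class-$2$ power identity collapses this to $\xi^9$ with $\xi = [[b, a], c, [b, a]]$; an entirely parallel computation yields $[B, B, A] = \eta^3$ with $\eta = [c, c, [b, a]]$. Since $c^{3^{n-1}} = 1$, Lemma~\ref{L:2.10}(iii) bounds $\xi^{3^{n-1}} = \eta^{3^{n-1}} = 1$; and since $9e_1$ has $3$-adic valuation $n$ and $3e_2$ has valuation $n-1$, both are divisible by $3^{n-1}$, so $[A, B, A]^{e_1} = \xi^{9e_1} = 1$ and $[B, B, A]^{e_2} = \eta^{3e_2} = 1$, closing $([b, a^3]c)^{3^{n-1}} = 1$. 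The hardest step will be this ninefold/threefold collapse of the two $\gamma_6$-terms; without recognising it, the exponent count is short by exactly one factor of $3$.
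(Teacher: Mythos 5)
Your proposal is correct and follows the paper's strategy. Part (i) is identical to the paper: expand $[b,(a^3)^{3^{n-1}}]$ by Lemma~\ref{L:4.1}$(iii)$ and kill each factor via Lemma~\ref{L:4.2} together with the $3$-adic valuations of the binomial coefficients. Part (ii) also starts exactly as in the paper (expand by Lemma~\ref{L:4.1}$(ii)$, note the weight-$\ge 8$ terms vanish, kill $[c,b,a^3]^{\binom{3^{n-1}}{2}}$ via Lemma~\ref{L:2.10}$(iv)$), and you correctly isolate the real difficulty: the two $\gamma_6$-terms need order dividing $3^{n-2}$, not just $3^{n-1}$. Where you diverge is only in how you extract that extra factor of $3$. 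The paper does it in two lines with Lemma~\ref{L:2.8}: multilinearity at top weight gives $[[b,a^3],c,b,a^3]^{3^{n-2}}=([[b,a],c,b,a^3]^3)^{3^{n-2}}=[[b,a],c^{3^{n-1}},b,a^3]=1$, and similarly $[c,c,b,a^3]^{3^{n-2}}=[c^{3^{n-1}},c,b,a]=1$. Your route — fully unfolding $[b,a^3]$ and $[c,A]$, collapsing $[A,B,A]$ to $\xi^9$ and $[B,B,A]$ to $\eta^3$ with $\xi,\eta\in\gamma_6\subset\ZZ(\G)$ — is the same mechanism carried out by hand and is more laborious but sound; the only loose end is the citation of Lemma~\ref{L:2.10}$(iii)$ for $\xi^{3^{n-1}}=1$, which for $\xi=[[b,a],[c,[b,a]]]$ really needs two steps (first bound the order of $[c,[b,a]]$ by $3^{n-1}$ via $(iii)$, then apply $(iv)$), or a direct multilinearity argument in the $c$-slot as the paper uses.
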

\begin{proof}
\begin{itemize}
\item[$(i)$] Since $a^{3^n}\in\Z$, applying Lemma \ref{L:4.1}$(iii)$ to $[b, (a^3)^{3^{n-1}}]$ yields 
\begin{align*} 1 = & [a^3, a^3, a^3, a^3, b, a^3]^{{{3^{n-1}}\choose{5}}} [[b, a^3], a^3, a^3, b, a^3]^{2{{3^{n-1}}\choose{3}}+3{{3^{n-1}}\choose{4}}}\\ & [a^3, a^3, a^3, b, a^3]^{{{3^{n-1}}\choose{4}}} [[b, a^3], a^3, b, a^3]^{{{3^{n-1}}\choose{2}}+2{{3^{n-1}}\choose{3}}} \\ &[a^3, a^3, b, a^3]^{{{3^{n-1}}\choose{3}}} [a^3, b, a^3]^{{{3^{n-1}}\choose{2}}} [b, a^3]^{3^{n-1}}.
\end{align*}
Now by Lemma \ref{L:4.2}, we obtain that $[b, a^3]^{3^{n-1}} = 1$.
\item[$(ii)$] Expanding $([b, a^3] c)^{3^{n-1}}$ by Lemma \ref{L:4.1}$(ii)$ gives 
\begin{align*} ([b, a^3] c)^{3^{n-1}} =&  [[b, a^3], c, b, a^3]^{{{3^{n-1}}\choose{2}}+2{{3^{n-1}}\choose{3}}} \\ & [c, c, b, a^3]^{{{3^{n-1}}\choose{3}}} [c, b, a^3]^{{{3^{n-1}}\choose{2}}} [b, a^3]^{3^{n-1}} c^{3^{n-1}}.
\end{align*}
Since $[b, a^3]^{3^{n-1}} = 1$, Lemma \ref{L:2.10}$(iv)$ yields $[c, b, a^3]^{3^{n-1}} = 1$. Now by applying Lemma \ref{L:2.8}, we have that $[[b, a^3], c, b, a^3]^{3^{n-2}} = ([[b, a], c, b, a^3]^3)^{3^{n-2}} = [[b, a], c^{3^{n-1}}, b, a^3] = 1$, and similarly we have $[c, c, b, a^3]^{3^{n-2}} = ([c, c, b, a]^3)^{3^{n-2}} = [c^{3^{n-1}}, c, b, a] = 1$. Therefore $([b, a^3]c)^{3^{n-1}} = 1$.
\end{itemize}
\end{proof}
Recall that if $\G$ has nilpotency class $5$, then $\eta(\G)$ will have nilpotency class at most $6$. In the proof of next Lemma we use the isomorphism of $\G \otimes \G$ with the subgroup $[\G, \G^{\phi}]$ of $\eta(\G)$.
\begin{lemma}\label{L:4.4}
Let $\G$ be a $3$-group of class less than or equal to $5$ and exponent $3^n$. Then the exponent of the image of $\G^3 \wedge \G$ in $\G \wedge \G$ divides $3^{n-1}$.
\end{lemma}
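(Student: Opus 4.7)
The plan is to transfer the problem into $\eta(G)$ via the isomorphism $\psi\colon G \otimes G \to [G, G^{\phi}]$, $\psi(g \otimes h) = [g, h^{\phi}]$, from \cite{R,EL}, and then apply Theorem \ref{th:4.3} in that ambient group. Since $G$ has class at most $5$, Theorem A of \cite{R} tells us that $\eta(G)$ is nilpotent of class at most $6$, matching the hypothesis of Theorem \ref{th:4.3}. Moreover, $G$ embeds in $\eta(G)$ and has exponent $3^n$, so for every $a\in G$ we have $a^{3^n}=1$ in $\eta(G)$; in particular, $a^{3^n}\in Z(\eta(G))$.

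For a single generator, observe that $\psi(a^3 \otimes b)=[a^3, b^\phi]=[b^\phi, a^3]^{-1}$. Applying Theorem \ref{th:4.3}(i) in $\eta(G)$ with $b^\phi$ in the role of $b$ gives $[b^\phi, a^3]^{3^{n-1}}=1$, hence $(a^3\otimes b)^{3^{n-1}}=1$ in $G\otimes G$. A general element of the subgroup of $G\otimes G$ that maps onto the image of $G^3\wedge G$ in $G\wedge G$ is a product of such generators, since Proposition \ref{P:2.3}(iv) gives $(a^3\otimes b)^{-1}={}^{a^3}(a^{-3}\otimes b)=a^{-3}\otimes{}^{a^3}b$, which is again a generator of the same form.

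For arbitrary products I would induct on the number of factors. Up to reversing the order (which does not affect vanishing of the $3^{n-1}$-th power, since $(x_1\cdots x_k)^{-1}=x_k^{-1}\cdots x_1^{-1}$), it suffices to show $\bigl([h_k^\phi, g_k^3]\cdots [h_1^\phi, g_1^3]\bigr)^{3^{n-1}}=1$. Setting $y=[h_{k-1}^\phi, g_{k-1}^3]\cdots [h_1^\phi, g_1^3]$, the inductive hypothesis gives $y^{3^{n-1}}=1$, and $y\in\gamma_2(\eta(G))$ since each factor is a commutator. Theorem \ref{th:4.3}(ii) applied with $a=g_k$, $b=h_k^\phi$, $c=y$ then yields $\bigl([h_k^\phi, g_k^3]\,y\bigr)^{3^{n-1}}=1$, closing the induction. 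Projecting along $G\otimes G\twoheadrightarrow G\wedge G$ proves the lemma. There is no serious obstacle here: once we have moved into $\eta(G)$, everything reduces to Theorem \ref{th:4.3}. The one technical point is to arrange each product in the exact form $[b, a^3]\cdot c$ demanded by Theorem \ref{th:4.3}(ii); the requirement $c\in\gamma_2(\eta(G))$ is satisfied for free because every factor is itself a commutator in $\eta(G)$.
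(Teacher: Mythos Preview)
Your proposal is correct and follows essentially the same route as the paper: both transfer to $\eta(G)$ via $\psi$ and apply Theorem~\ref{th:4.3} there, using that $\eta(G)$ is a $3$-group of class at most $6$ and that $a^{3^n}=1\in Z(\eta(G))$. The only difference is that the paper writes out just the single-generator and two-generator cases and stops, while you make the induction on the number of factors explicit via Theorem~\ref{th:4.3}(ii); your handling of the order-reversal to match the $[b,a^3]\,c$ shape is exactly the bookkeeping the paper suppresses.
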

\begin{proof}
Let $g, h, a_i, b_i \in \G$ for $i=1, 2$. Consider the isomorphism $\psi : \G \otimes \G \rightarrow [\G, \G^{\phi}]$  defined by $\psi(g \otimes h) = [g, h^{\phi}] $, where $\G^{\phi}$ is an isomorphic copy of $\G$. Applying Theorem \ref{th:4.3}, we obtain that $[a_i^3, b_i^{\phi}]^{3^{n-1}} = 1$ for $i=1, 2$, and $([a_1^3, b_1^{\phi}] [a_2^3, b_2^{\phi}])^{3^{n-1}} = 1$. Thus $(a_i^3\wedge b_i)^{3^{n-1}} = 1$ and $((a_1^3\wedge b_1) (a_2^3\wedge b_2))^{3^{n-1}} = 1$, proving the result we seek.
\end{proof}

The following Lemma can be found in \cite{GE1} which is used for the proof of the main theorem of this section.

\begin{lemma}\label{L:4.5}
Let $\N, \MM$ be normal subgroups of a group $\G$. If $\MM \subset \N$, then we have the exact sequence
$\MM \wedge \G\rightarrow \N\wedge \G \rightarrow \frac{\N}{\MM} \wedge \frac{\G}{\MM} \rightarrow 1.$
\end{lemma}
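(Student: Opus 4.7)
The plan is to prove right-exactness by the universal property of the exterior square. The first arrow $\MM \wedge \G \to \N \wedge \G$ is induced by the inclusion $\MM \hookrightarrow \N$, which is $\G$-equivariant since $\MM \triangleleft \G$, and the second arrow $\phi \colon \N \wedge \G \to (\N/\MM) \wedge (\G/\MM)$ is induced by the natural quotients. Surjectivity of $\phi$ on the generators $\bar n \wedge \bar g$ is immediate. Let $T$ denote the image of the first map; invoking Proposition \ref{P:2.3}(ii), conjugation in $\N \wedge \G$ is governed by $\lambda$, and since $\MM$ is $\G$-stable, $T$ is a normal subgroup of $\N \wedge \G$. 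The inclusion $T \subseteq \ker \phi$ is clear, because $\bar m = 1$ for $m \in \MM$ and $1 \wedge \bar g = 1$ (set $g = g' = 1$ in \eqref{eq:2.2.1}).

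For the reverse inclusion, I would construct an inverse $\psi \colon (\N/\MM) \wedge (\G/\MM) \to (\N \wedge \G)/T$ sending $\bar n \wedge \bar g \mapsto \overline{n \wedge g}$. Well-definedness reduces to two checks: the defining relations \eqref{eq:2.2.1}, \eqref{eq:2.2.2} pull back automatically since they hold in $\N \wedge \G$; and the value $\overline{n \wedge g}$ must be independent of the choice of coset representatives. For the latter, expanding $(n m_1) \wedge (g m_2)$ with $m_1, m_2 \in \MM$ via \eqref{eq:2.2.1} and \eqref{eq:2.2.2} yields $n \wedge g$ times factors of the form $m \wedge x$ or $y \wedge m'$ with $m, m' \in \MM$, $x \in \G$, $y \in \N$.

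The main obstacle is handling the asymmetric-looking factors $y \wedge m'$ with $y \in \N$ and $m' \in \MM$: these must also be shown to lie in $T$, which follows from the antisymmetry identity $y \wedge m' = (m' \wedge y)^{-1}$ in the exterior square. This identity is obtained by expanding $(ym') \otimes (ym')$ via \eqref{eq:2.2.1}, \eqref{eq:2.2.2} and using that $x \otimes x \in \nabla(\G)$ for every $x \in \G$, so that all diagonal terms vanish in $\G \wedge \G$. Once these verifications are in place, $\psi$ is a well-defined homomorphism and the composite $\phi \circ \psi$ is the identity on generators, so the induced map $(\N \wedge \G)/T \to (\N/\MM) \wedge (\G/\MM)$ is an isomorphism, yielding exactness at $\N \wedge \G$ and completing the proof.
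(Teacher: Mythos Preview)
Your argument is correct. The paper does not actually prove Lemma~\ref{L:4.5}; it simply records the statement and cites Ellis \cite{GE1} for the proof. So there is nothing to compare at the level of argument---you have supplied a self-contained proof where the paper defers to the literature.

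A couple of small remarks on the write-up. First, your handling of the antisymmetry step is the genuinely nontrivial point and it is right: since $y, m' \in \N$ and the diagonal relations $n\wedge n=1$ in $\N\wedge\G$ are imposed for all $n\in\N$, expanding $(ym')\wedge(ym')=1$ via \eqref{eq:2.2.1} and \eqref{eq:2.2.2} gives $({}^{y}m'\wedge y)(y\wedge{}^{y}m')=1$, and since $\MM\trianglelefteq\G$ one recovers $y\wedge m''\in T$ for every $m''\in\MM$. It is worth stating explicitly that this computation takes place in $\N\wedge\G$ (not merely in $\G\wedge\G$), and works precisely because both $y$ and $m'$ lie in $\N$. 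Second, to conclude that $\bar\phi\colon(\N\wedge\G)/T\to(\N/\MM)\wedge(\G/\MM)$ is an isomorphism you should note that $\psi\circ\bar\phi$ is also the identity on generators (you only mention $\phi\circ\psi$); this is immediate, but checking both composites is what gives the two-sided inverse.
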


Now we come to the main theorem of this section.

\begin{theorem}\label{th:4.6}
Let $\G$ be a finite $\p$-group of nilpotency class 5. If $\p$ is odd, then $\e(\G\wedge \G)\mid \e(\G)$. In particular, $\e(\M)\mid \e(\G)$.
\end{theorem}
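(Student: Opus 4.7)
The plan is to split the argument into two cases according to the prime $\p$.

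If $\p \geq 5$, then the nilpotency class of $\G$ is $5 \leq \p$, so the conclusion $\e(\G \wedge \G) \mid \e(\G)$ is immediate from Theorem 3.11. This disposes of every case except $\p = 3$.

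For $\p = 3$, write $\e(\G) = 3^n$; we may assume $n \geq 1$, since $n = 0$ is trivial. The strategy is to combine Lemma 4.4 with the exact sequence provided by Lemma 4.5 for the pair $\MM = \G^3 \subset \N = \G$, namely
\[
\G^3 \wedge \G \longrightarrow \G \wedge \G \longrightarrow (\G/\G^3) \wedge (\G/\G^3) \longrightarrow 1.
\]
By Lemma 4.4, the image of the leftmost map has exponent dividing $3^{n-1}$. For the quotient on the right, observe that $\G/\G^3$ is a finite group of exponent $3$, so by the classical theorem of Levi and van der Waerden its nilpotency class is at most $3 = \p$; Theorem 3.11 applied to $\G/\G^3$ therefore yields $\e((\G/\G^3) \wedge (\G/\G^3)) \mid 3$. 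Since in any exact sequence $A \to B \to C \to 1$ the exponent of $B$ divides $\e(\im(A \to B)) \cdot \e(C)$, we obtain
\[
\e(\G \wedge \G) \mid 3^{n-1} \cdot 3 = 3^n = \e(\G),
\]
which is the desired bound. The statement for $\M$ follows because $\M$ is the kernel of the induced map $\G \wedge \G \to \G^{\prime}$, whence $\e(\M) \mid \e(\G \wedge \G)$.

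The hard part of the section is already absorbed into Lemma 4.4, whose proof rests on the delicate collection-formula analysis of Lemma 4.1, the vanishing estimates of Lemma 4.2, and the structural calculation of Theorem 4.3 controlling $[b, a^3]$-type expressions. Granted Lemma 4.4, the only real subtlety in the proof of Theorem 4.6 itself is the observation that the exponent-$3$ quotient $\G/\G^3$ has class at most $3$ by Levi--van der Waerden, which brings it into the scope of Theorem 3.11 despite the original group having class $5$.
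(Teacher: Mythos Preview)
Your proof is correct and follows the same route as the paper: split off $\p\ge 5$ via Theorem~3.11, and for $\p=3$ use the exact sequence $\G^3\wedge\G\to\G\wedge\G\to(\G/\G^3)\wedge(\G/\G^3)\to 1$ together with Lemma~4.4 for the image and an exponent-$3$ argument for the quotient. The only cosmetic difference is that the paper handles the quotient by citing Moravec \cite{PM1} directly for groups of exponent~$3$, whereas you invoke Levi--van der Waerden to bound the class of $\G/\G^3$ by~$3$ and then feed this back into Theorem~3.11; your version is slightly more self-contained but otherwise equivalent.
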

\begin{proof}
The claim holds when $\p \geq 5$ by Theorem $\ref{th:3.11}$. Now we proceed to prove for $\p = 3$.
Consider the exact sequence $\G^3\wedge \G \rightarrow \G \wedge \G \rightarrow \frac{\G}{\G^3}\wedge \frac{\G}{\G^3} \rightarrow1$. Thus $\e( \G \wedge \G )\mid \e(\im( \G^{3}\wedge \G))\ \e( \frac{\G}{\G^{3}}\wedge \frac{\G}{\G^{3}} )$. Note that $\e(\frac{\G}{\G^{3}}) = 3$ and hence $\e( \frac{\G}{\G^{3}}\wedge \frac{\G}{\G^{3}} ) \mid 3$ (cf. \cite{PM1}). By Lemma \ref{L:4.4}, we have $\e(\im( \G^{3}\wedge \G)) \mid 3^{n-1}$ and the result follows.
\end{proof}

\section{Regular groups, Powerful groups and groups with power-commutator structure}

In this section, we prove that $\e(\M)\mid \e(\G)$ for powerful $\p$-groups, the class of groups considered by D. Arganbright in \cite{DA}, which includes the class of potent $\p$-groups, and the class of groups considered by L.E. Wilson in \cite{LEW}, which includes $\p$-groups of class at most $\p-1$.

Recall that a $\p$-group $\G$ is said to be powerful if $\gamma_2(\G)\subset \G^{\p}$, when $\p$ is odd and $\gamma_2(\G)\subset \G^4$, for $\p=2$. The next lemma can be found in \cite{SM}.

\begin{lemma}\label{L:5.1}
Let $\G$ be a finite $\p$-group. If $\G$ is powerful, then the subgroup $\G^{\p}$ of $G$ is the set of all $\p$th powers of elements of $\G$ and $\G^{\p}$ is powerful.
\end{lemma}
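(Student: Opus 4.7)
The plan is to establish the two assertions in tandem, proceeding by induction on $|\G|$. The easier assertion, that $\G^\p$ is powerful, reduces to showing $[\G^\p,\G^\p]\subseteq(\G^\p)^\p=\G^{\p^2}$. For this I would first derive, using the collection formula applied to $[x,y^\p]$ together with the defining inclusion $\gamma_2(\G)\subseteq\G^\p$, the standard identity
\[
 [\G,\G^\p]\ \subseteq\ [\G,\G]^\p\ \subseteq\ (\G^\p)^\p,
\]
valid in any powerful $\p$-group. Since $[\G^\p,\G^\p]\subseteq[\G,\G^\p]$, this yields the powerfulness of $\G^\p$ immediately.

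For the harder assertion, that the set $\{g^\p:g\in\G\}$ already equals the subgroup $\G^\p$, I would argue by induction on $|\G|$, the base case $\G$ abelian being trivial. Pick a central subgroup $\ZZ$ of order $\p$ contained in $\gamma_{\cc}(\G)$, where $\cc$ is the nilpotency class. Because $\gamma_\cc(\G)\subseteq\gamma_2(\G)\subseteq\G^\p$ in a powerful $\p$-group (which follows inductively on $i$ from $\gamma_{i+1}(\G)=[\gamma_i(\G),\G]\subseteq[\G^\p,\G]\subseteq(\G^\p)^\p$), we have $\ZZ\subseteq\G^\p$. The quotient $\G/\ZZ$ is still a powerful $\p$-group of smaller order, so by the inductive hypothesis every element of $(\G/\ZZ)^\p=\G^\p\ZZ/\ZZ$ is a genuine $\p$th power in $\G/\ZZ$. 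Lifting, every element of $\G^\p$ has the form $g^\p z$ for some $g\in\G$, $z\in\ZZ$.

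The crux is to rewrite $g^\p z$ as a single $\p$th power in $\G$. Since $z\in\ZZ(\G)$, the Hall--Petrescu formula collapses to $(gh)^\p=g^\p h^\p$ whenever $h$ is central, so it suffices to find $h\in\ZZ(\G)$ with $h^\p=z$. This is exactly a statement about the smaller powerful $\p$-group $\G^\p$: granted the first assertion for $\G^\p$ (which is legitimate since $|\G^\p|<|\G|$ by the Burnside basis theorem unless $\G=1$), every element of $(\G^\p)^\p$ is a $\p$th power in $\G^\p$, and a further iteration using centrality of $\ZZ$ and the inclusion $\ZZ\subseteq(\G^\p)^\p$ (obtained from $\gamma_\cc(\G)\subseteq[\G,\G^\p]\subseteq(\G^\p)^\p$) places $z$ in the image of the $\p$th power map.

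The main obstacle will be that the two claims intertwine: to rewrite $g^\p z$ as a single power I need the first claim for the subgroup $\G^\p$, but to show $\G^\p$ is the set of $\p$th powers I must already know $\G^\p$ is powerful. The remedy is the simultaneous induction on $|\G|$ outlined above, where at each stage the second assertion is applied only to powerful $\p$-groups of strictly smaller order, so that no circularity arises.
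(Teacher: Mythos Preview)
The paper does not prove this lemma; it is quoted from the reference \cite{SM} without argument, so there is no proof here to compare with. I will comment on your sketch directly.

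The first half (powerfulness of $\G^{\p}$) is essentially fine, though note that $[\G,\G^{\p}]\subseteq[\G,\G]^{\p}$ does not drop out of a single application of the collection formula to $[x,y^{\p}]$: the weight-$\p$ correction term lands only in $\gamma_{\p+1}(\G)$, and absorbing it into $[\G,\G]^{\p}$ requires the standard nilpotence trick (pass to $\G/[\G,\G]^{\p}$, observe $[\G^{\p},\G]\subseteq[\G^{\p},\G,\G]$ there, and iterate).

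The second half has two genuine gaps. First, your inclusion $\ZZ\subseteq(\G^{\p})^{\p}$ is obtained from $\gamma_{\cc}(\G)\subseteq[\G,\G^{\p}]$, which in turn needs $\gamma_{\cc-1}(\G)\subseteq\G^{\p}$, i.e.\ $\cc\ge 3$. For $\cc=2$ it can fail outright: in the nonabelian group of order $\p^{3}$ and exponent $\p^{2}$ one has $\G^{\p}=\gamma_{2}(\G)\cong\mathbb{Z}/\p$ while $(\G^{\p})^{\p}=1$, so your inductive step does not cover class $2$. Second, even when $z\in(\G^{\p})^{\p}$, the inductive hypothesis on $\G^{\p}$ yields only $z=w^{\p}$ with $w\in\G^{\p}$; nothing forces $w$ to be central, so you cannot invoke $(gw)^{\p}=g^{\p}w^{\p}$. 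Centrality of $z$ says nothing about centrality of a $\p$th root of $z$, and the phrase ``a further iteration using centrality of $\ZZ$'' does not supply one. The standard repair avoids central roots altogether: one shows that $g\mapsto g^{\p}$ induces a surjective \emph{homomorphism} $\G\to\G^{\p}/(\G^{\p})^{\p}$ (Hall--Petrescu together with $\gamma_{\p}(\G)\subseteq\gamma_{3}(\G)\subseteq[\G^{\p},\G]\subseteq(\G^{\p})^{\p}$), so any $a\in\G^{\p}$ has the form $g^{\p}b$ with $b\in(\G^{\p})^{\p}$, and one then descends through the chain $\G^{\p}\supseteq(\G^{\p})^{\p}\supseteq\cdots$ using that every term is powerfully embedded in $\G$.
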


In \cite{LM}, the authors prove the conjecture for powerful groups. The authors of \cite{MHM2} prove that if $N$ is powerfully embedded in $\G$, then the $\e(\MM(\G,\N))\mid \e(\N)$. In the theorem below, we generalize both these results.

\begin{theorem}\label{th:5.2}
Let $\p$ be an odd prime and $\N$ be a normal subgroup of a finite group $\G$. If $\N$ is a powerful $\p$-group, then $\e(\N\wedge \G)\mid \e(\N)$. In particular, $\e(\MM(\G,\N))\mid \e(\N)$.
\end{theorem}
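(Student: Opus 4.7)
The plan is to induct on $n$, where $\p^n = \e(\N)$, using Lemma~\ref{L:5.1} together with the exact sequence of Lemma~\ref{L:4.5} and the abelian case of Lemma~\ref{L:2.5}$(ii)$. Since $\N^{\p}$ is characteristic in $\N$, it is normal in $\G$; Lemma~\ref{L:5.1} then ensures that $\N^{\p}$ is again powerful and consists precisely of $\p$-th powers of elements of $\N$, whence every $x^{\p} \in \N^{\p}$ satisfies $(x^{\p})^{\p^{n-1}} = x^{\p^n} = 1$, giving $\e(\N^{\p}) \mid \p^{n-1}$.

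For the base case $n = 1$, powerfulness forces $\gamma_2(\N) \subseteq \N^{\p} = 1$, so $\N$ is abelian of odd exponent $\p$. Applying Lemma~\ref{L:2.5}$(ii)$ with the two normal subgroups $\N$ and $\G$ of the ambient group $\G$ itself yields $\e(\N \otimes \G) \mid \p$, and hence $\e(\N \wedge \G) \mid \p$ since $\N \wedge \G$ is a quotient of $\N \otimes \G$.

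For the inductive step, I will apply Lemma~\ref{L:4.5} with $\MM = \N^{\p}$ to obtain the exact sequence
\[
\N^{\p} \wedge \G \longrightarrow \N \wedge \G \longrightarrow \tfrac{\N}{\N^{\p}} \wedge \tfrac{\G}{\N^{\p}} \longrightarrow 1,
\]
so that $\e(\N \wedge \G)$ divides $\e\bigl(\im(\N^{\p} \wedge \G)\bigr) \cdot \e\bigl(\tfrac{\N}{\N^{\p}} \wedge \tfrac{\G}{\N^{\p}}\bigr)$. The quotient $\N/\N^{\p}$ is abelian of exponent dividing $\p$ and is normal in $\G/\N^{\p}$, so a second application of Lemma~\ref{L:2.5}$(ii)$ bounds the right-hand factor by $\p$. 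For the left-hand factor, the induction hypothesis applied to the powerful $\p$-group $\N^{\p}$ (of exponent at most $\p^{n-1}$), regarded as a normal subgroup of $\G$, gives $\e(\N^{\p} \wedge \G) \mid \p^{n-1}$, whence its image in $\N \wedge \G$ also has exponent dividing $\p^{n-1}$. Multiplying these bounds yields $\e(\N \wedge \G) \mid \p^{n} = \e(\N)$, as desired. The ``in particular'' assertion then follows because the relative Schur multiplier $\MM(\G, \N)$ embeds as the kernel of the commutator map $\N \wedge \G \to [\N, \G]$, $n \wedge g \mapsto [n, g]$, so its exponent inherits the same divisibility.

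I expect the only real subtlety to be ensuring $\e(\N^{\p}) \mid \p^{n-1}$, which is what allows the inductive hypothesis to be invoked on $\N^{\p}$; this hinges essentially on the non-trivial content of Lemma~\ref{L:5.1}, namely that in a powerful $\p$-group $\N^{\p}$ equals the set (not merely the subgroup generated by) $\p$-th powers. Every other step reduces to packaging known results: Lemma~\ref{L:4.5} for the exact sequence, Lemma~\ref{L:2.5}$(ii)$ for the abelian quotient, and the standard embedding of $\MM(\G, \N)$ into $\N \wedge \G$ for the last conclusion.
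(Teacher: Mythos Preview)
Your proof is correct and follows essentially the same route as the paper's: induction on the exponent of $\N$, with Lemma~\ref{L:2.5}$(ii)$ handling the abelian base case, Lemma~\ref{L:4.5} supplying the exact sequence, and Lemma~\ref{L:5.1} ensuring $\N^{\p}$ is again powerful with strictly smaller exponent. The only cosmetic difference is that for the quotient $\N/\N^{\p}$ the paper phrases it as ``a powerful group of exponent $\p$, so the base case applies,'' whereas you observe directly that such a group is abelian and invoke Lemma~\ref{L:2.5}$(ii)$ again; these are the same observation.
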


\begin{proof}
Let $\e(\N) =\p^e$. We will proceed by induction on $e$. If $\e(\N) =\p$, then $\gamma_2(\N)\subset \N^{\p} =1$. So by Lemma \ref{L:2.5}$(ii)$, $\e(\N\wedge \G)\mid \e(\N)$. Now let $e>1$. Using Lemma \ref{L:4.5}, consider the exact sequence $\N^{\p}\wedge \G\rightarrow \N\wedge \G \rightarrow \frac{\N}{\N^{\p}}\wedge \frac{\G}{\N^{\p}} \rightarrow 1$. Then $\e(\N\wedge \G)\mid \e(\im(\N^{\p}\wedge \G))\e(\frac{\N}{\N^{\p}}\wedge \frac{\G}{\N^{\p}})$. By Lemma \ref{L:5.1}, $\N^{\p}$ is powerful and $\e(\N^{\p}) =\p^{e-1}$. Thus by induction hypothesis, we have $\e(\N^p\wedge \G)\mid \e(\N^{\p})$, and hence $\e(\im(\N^{\p}\wedge \G))\mid \e(\N^{\p})$. Note that $\frac{\N}{\N^{\p}}$ is a powerful group of exponent $\p$, and we have showed above that the theorem holds for powerful groups of exponent $\p$. Therefore $\e(\frac{\N}{\N^{\p}}\wedge \frac{\G}{\N^{\p}})\mid \p$, whence the result.
\end{proof}

\begin{definition}\label{D.5.3}
Let $\G$ be a finite $p$-group.
\begin{itemize}
\item[$(i)$] We say $\G$ satisfies condition $(1)$ if $\gamma_m(\G)\subset \G^{\p}$ for some $m=2, 3, \dots, \p-1$, and it is said to be a potent $\p$-group if $m=\p-1$.
\item[$(ii)$] We say $\G$ satisfies condition $(2)$ if $\gamma_{\p}(\G)\subset \G^{{\p}^2}$.
\end{itemize}
\end{definition}

The next Theorem can be found in \cite{SM}, \cite{SZ} and \cite{LEW}.

\begin{theorem}\label{th:5.4}
Let $\G$ be a finite $\p$-group. 

\begin{itemize}
\item[$(i)$] If $\G$ regular, then $\G^{\p}$ is the set of all $\p$th powers of elements of $\G$ and $\G^{\p}$ is powerful.
\item[$(ii)$] If $\G$ satisfies condition $(1)$, then $\G^{\p}$ is the set of all $\p$th powers of elements of $\G$ and $\G^{\p}$ is powerful.
\item[$(iii)$] If $\G$ satisfies condition $(2)$, $\G^{\p}$ is the set of all $\p$th powers of elements of $\G$ and $\G^{\p}$ is powerful.
\end{itemize}

\end{theorem}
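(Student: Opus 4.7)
My plan is to prove the three cases in sequence using a common strategy based on the Hall collection formula. In each case one must establish two things: that $\G^{\p}$ coincides with the set of $\p$th powers $\{g^{\p} : g\in \G\}$, and that $[\G^{\p},\G^{\p}]\subset (\G^{\p})^{\p}$ (powerfulness). The key tool will be Lemma \ref{L:2.12}, which expands $(ab)^{\p}$ as $a^{\p}b^{\p}$ times a product of basic commutators in $a,b$ with coefficients $f_C(\p)$; the various hypotheses will force each of these commutator factors to be a $\p$th power of some element of $\G$.

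For part $(i)$, I would start from the classical regular $\p$-group identity $(ab)^{\p}=a^{\p}b^{\p}\prod c_i^{\p}$ with $c_i\in \gamma_i(\langle a,b\rangle)$. Rearranging to $a^{\p}b^{\p}=(ab)^{\p}\prod c_i^{-\p}$ and iterating (using the fact that regularity is inherited by subgroups and each $c_i$ lies deeper in the lower central series) writes $a^{\p}b^{\p}$ as a single $\p$th power, after which a straightforward induction handles products of arbitrarily many $\p$th powers. For powerfulness, Lemma \ref{L:2.15}$(i)$--$(ii)$ lets me rewrite commutators of the form $[a^{\p},b^{\p}]$ as products of $\p^2$th powers of commutators, placing $[a^{\p},b^{\p}]$ inside $(\G^{\p})^{\p}$.

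For parts $(ii)$ and $(iii)$, I would invoke Lemma \ref{L:2.12} with $i=\p$ to obtain $(ab)^{\p}\equiv a^{\p}b^{\p}\prod_{C\in \mathcal{A}_{\p}(\{a,b\})}C^{f_C(\p)}\pmod{\gamma_{\p+1}(\langle a,b\rangle)}$. Under condition $(1)$, every basic commutator $C$ of weight $\ge m$ lies in $\gamma_m(\G)\subset \G^{\p}$; for $C$ of weight $k<m\le \p-1$, the coefficient $f_C(\p)$ is an integer combination of ${{\p}\choose{1}},\dots,{{\p}\choose{k}}$, hence divisible by $\p$. Every commutator factor is therefore a $\p$th power, $a^{\p}b^{\p}$ is a product of $\p$th powers, and an induction on nilpotency class collapses it to a single $\p$th power. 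Condition $(2)$ is handled with threshold $m=\p$: weight-$k<\p$ commutators are absorbed by the same divisibility of $f_C(\p)$, and weight-$\p$ commutators lie in $\gamma_{\p}(\G)\subset \G^{\p^2}\subset (\G^{\p})^{\p}$. Powerfulness in both cases follows by expanding $[a^{\p},b^{\p}]$ via Lemma \ref{L:2.10}: the resulting commutators all have weight strictly greater than the threshold, so by hypothesis they lie in $(\G^{\p})^{\p}$.

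The main obstacle will be case $(iii)$: the binomial ${{\p}\choose{\p}}=1$ yields no free factor of $\p$, so the weight-$\p$ terms in the collection formula must obtain their $\p$-divisibility directly from the hypothesis, which is precisely why condition $(2)$ demands $\gamma_{\p}(\G)\subset \G^{\p^2}$ rather than merely $\G^{\p}$. Ensuring that this $\p^2$-divisibility is correctly propagated through the nested inductions on nilpotency class (both to express $a^{\p}b^{\p}$ as a single $\p$th power, and to verify that $\G^{\p}$ is powerful) is the delicate structural content of \cite{LEW}.
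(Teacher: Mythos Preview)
The paper does not prove Theorem \ref{th:5.4}; it is stated as a known result with the sentence ``The next Theorem can be found in \cite{SM}, \cite{SZ} and \cite{LEW}'' and is used as a black box in the proofs of Theorems \ref{th:5.5} and \ref{th:5.6}. There is therefore no proof in the paper to compare your proposal against.

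Your sketch is broadly in the spirit of how these results are established in the cited sources (Hall collection plus induction on nilpotency class), and you correctly isolate the essential difficulty in part $(iii)$: the weight-$\p$ commutator $[\,_{\p-1}b,a]$ carries exponent ${\p\choose \p}=1$, so one genuinely needs $\gamma_{\p}(\G)\subset \G^{\p^2}$ rather than $\G^{\p}$. That said, your outline leaves real work undone. In parts $(ii)$ and $(iii)$ the claim that ``an induction on nilpotency class collapses [a product of $\p$th powers] to a single $\p$th power'' is not automatic: the quotient $\G/\gamma_c(\G)$ need not inherit condition $(1)$ or $(2)$, and the $\p$th powers you are multiplying are of elements lying in various $\gamma_i$, so one must track carefully that the hypothesis is preserved along the induction. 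Similarly, your powerfulness argument via Lemma \ref{L:2.10} produces commutators of $a^{\p}$ and $b^{\p}$ of increasing weight, but showing these land in $(\G^{\p})^{\p}$ rather than merely in $\G^{\p}$ again requires a nested induction that you have only gestured at. These are exactly the technical points handled in \cite{SZ} and \cite{LEW}, and the paper is right to cite rather than reproduce them.
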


Groups satisfying condition ($1$) were studied by D. E. Arganbright in \cite{DA}. This class includes the class of powerful $\p$-groups for $\p$ odd and potent $\p$-groups. The groups satisfying condition ($2$) were studied by L. E. Wilson in \cite{LEW}. In the theorem below, we show that the conjecture is true for all these classes of groups. The first part of the Theorem below can be found in \cite{PM4}, but we give a different proof here. Since the proofs of both the parts of the next theorem are similar, we only  prove the second part.

\begin{theorem}\label{th:5.5}
Let $\p$ be an odd prime and $\G$ be a finite $\p$-group satisfying either of the conditions below,
\begin{itemize}
\item[(i)] $\gamma_m(\G)\subset \G^{\p}$ for some  $m$ with $2\leq m\leq \p-1$.
\item[(ii)] $\gamma_{\p}(\G)\subset \G^{\p^2}$.
\end{itemize}
 Then $\e(\G\wedge \G)\mid \e(\G)$. In particular, $\e(\M)\mid \e(\G)$.
\end{theorem}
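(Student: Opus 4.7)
The plan is to apply the exact sequence from Lemma~\ref{L:4.5} with $\MM = \G^{\p}$ and $\N = \G$, which reduces the problem to controlling two factors; my intention is to treat hypotheses (i) and (ii) in a uniform manner by observing that both of them make $\G^{\p}$ powerful and force $\G/\G^{\p}$ to have nilpotency class less than $\p$.

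First I would invoke Theorem~\ref{th:5.4} (part (ii) for hypothesis (i), and part (iii) for hypothesis (ii)) to conclude that $\G^{\p}$ is a powerful $\p$-group and that every element of $\G^{\p}$ is a $\p$-th power of an element of $\G$; in particular $\e(\G^{\p}) = \e(\G)/\p$. Lemma~\ref{L:4.5} then yields the exact sequence
\[
\G^{\p} \wedge \G \;\longrightarrow\; \G \wedge \G \;\longrightarrow\; (\G/\G^{\p}) \wedge (\G/\G^{\p}) \;\longrightarrow\; 1,
\]
so that $\e(\G \wedge \G)$ divides $\e(\im(\G^{\p} \wedge \G)) \cdot \e((\G/\G^{\p}) \wedge (\G/\G^{\p}))$. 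Because $\G^{\p}$ is a normal powerful $\p$-subgroup of $\G$, Theorem~\ref{th:5.2} applies directly and gives
\[
\e(\im(\G^{\p} \wedge \G)) \,\mid\, \e(\G^{\p} \wedge \G) \,\mid\, \e(\G^{\p}) = \e(\G)/\p.
\]

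For the remaining factor I would observe that under hypothesis (i) the inclusion $\gamma_m(\G) \subset \G^{\p}$ forces $\gamma_m(\G/\G^{\p}) = 1$ with $2 \leq m \leq \p-1$, while under hypothesis (ii) the chain $\gamma_{\p}(\G) \subset \G^{\p^2} \subset \G^{\p}$ forces $\gamma_{\p}(\G/\G^{\p}) = 1$. In either case $\G/\G^{\p}$ is a $\p$-group of exponent $\p$ whose nilpotency class is at most $\p - 1$, so Theorem~\ref{th:3.11} delivers $\e((\G/\G^{\p}) \wedge (\G/\G^{\p})) \mid \p$. Combining the two estimates yields $\e(\G \wedge \G) \mid (\e(\G)/\p) \cdot \p = \e(\G)$, and the corresponding bound on $\e(\M)$ follows because $\M$ embeds into $\G \wedge \G$.

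The main potential pitfall is that hypothesis (ii) looks weaker than anything Theorem~\ref{th:5.2} handles directly; this is resolved precisely by Theorem~\ref{th:5.4}(iii), which upgrades the assumption $\gamma_{\p}(\G) \subset \G^{\p^2}$ to powerfulness of $\G^{\p}$ and simultaneously identifies $\G^{\p}$ with the set of $\p$-th powers, thereby pinning down $\e(\G^{\p})$. No separate induction on $\e(\G)$ is needed here because the inductive work has already been absorbed into the proof of Theorem~\ref{th:5.2}, so the argument is a clean assembly of Theorems~\ref{th:5.2}, \ref{th:5.4}, and \ref{th:3.11} through Lemma~\ref{L:4.5}.
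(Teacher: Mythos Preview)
Your proposal is correct and follows essentially the same route as the paper: the paper also applies Lemma~\ref{L:4.5} with $\MM=\G^{\p}$, invokes Theorem~\ref{th:5.4} to make $\G^{\p}$ powerful with $\e(\G^{\p})=\p^{n-1}$, uses Theorem~\ref{th:5.2} for the image term, and Theorem~\ref{th:3.11} for the quotient term. The only cosmetic difference is that the paper singles out the base case $\e(\G)=\p^{2}$ for hypothesis~(ii) before running the general step, whereas you observe (rightly) that the argument works uniformly without that case split.
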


\begin{proof}
\begin{itemize}
\item[$(ii)$] Let $\e(\G) = \p^n$. If $n =2$, then $\gamma_{\p}(\G)\subset \G^{\p^2} =1$. Therefore by Theorem \ref{th:3.11}, $\e(\G\wedge \G)\mid \e(\G)$. For $n >2$, consider the exact sequence $\G^{\p}\wedge \G\rightarrow \G\wedge \G \rightarrow \frac{\G}{\G^{\p}}\wedge \frac{\G}{\G^{\p}} \rightarrow 1$, which implies $\e(\G\wedge \G)\mid \e(\im(\G^{\p}\wedge \G))\e(\frac{\G}{\G^{\p}}\wedge \frac{\G}{\G^{\p}})$. By Theorem \ref{th:5.4}, $\G^{\p}$ is powerful and $\e(\G^{\p}) =\p^{n-1}$. By Theorem \ref{th:5.2}, $\e(\G^{\p}\wedge \G)\mid \p^{n-1}$, and hence $\e(\im(\G^{\p}\wedge \G))\mid \p^{n-1}$. Since $\gamma_{\p}(\G)\subset \G^{\p^2}\subset \G^{\p}$, the group $\frac{\G}{\G^{\p}}$ has nilpotency class $\le \p-1$. Now applying Theorem \ref{th:3.11}, we obtain $\e(\frac{\G}{\G^{\p}}\wedge \frac{\G}{\G^{\p}})\mid \p$, and hence the result.
 \end{itemize}
\end{proof}

In the next Theorem, we show that to prove the conjecture for regular $\p$-groups, it is enough to prove it for groups of exponent $\p$. We prove it more generally for the exterior square.  
 \begin{theorem}\label{th:5.6}
The following statements are equivalent:
\begin{itemize}
\item[$(i)$] $\e(\G\wedge \G)\mid \e(\G)$ for all regular $\p$-groups $\G$.
\item[$(ii)$] $\e(\G\wedge \G)\mid \e(\G)$ for all groups $\G$ of exponent $\p$.
\end{itemize}

 \end{theorem}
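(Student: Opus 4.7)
The implication $(i) \Rightarrow (ii)$ is immediate, since every group of exponent $\p$ is a regular $\p$-group (trivially for $\p = 2$, where such groups are elementary abelian, and by Hall's criterion for odd $\p$); so assuming $(i)$ on the class of regular $\p$-groups specializes to $(ii)$.

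For $(ii) \Rightarrow (i)$, my plan is to split a regular $\p$-group $\G$ with $\e(\G) = \p^n$ between the powerful normal subgroup $\G^{\p}$ and the exponent-$\p$ quotient $\G/\G^{\p}$, using the exterior-square exact sequence of Lemma \ref{L:4.5}. Applied to $\G^{\p} \unlhd \G$, that sequence reads
$$\G^{\p} \wedge \G \longrightarrow \G \wedge \G \longrightarrow \tfrac{\G}{\G^{\p}} \wedge \tfrac{\G}{\G^{\p}} \longrightarrow 1,$$
so that $\e(\G \wedge \G) \mid \e(\im(\G^{\p} \wedge \G)) \cdot \e\bigl(\tfrac{\G}{\G^{\p}} \wedge \tfrac{\G}{\G^{\p}}\bigr)$. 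For the left layer, Theorem \ref{th:5.4}$(i)$ tells us that $\G^{\p}$ is powerful, and since $\G^{\p} = \{g^{\p} : g \in \G\}$ when $\G$ is regular, every element of $\G^{\p}$ has order dividing $\p^{n-1}$; Theorem \ref{th:5.2} then yields $\e(\G^{\p} \wedge \G) \mid \e(\G^{\p}) \mid \p^{n-1}$, and hence the same bound on the image. For the right layer, $\G/\G^{\p}$ has exponent dividing $\p$, so hypothesis $(ii)$ gives $\e\bigl(\tfrac{\G}{\G^{\p}} \wedge \tfrac{\G}{\G^{\p}}\bigr) \mid \p$. Multiplying the two bounds yields $\e(\G \wedge \G) \mid \p^{n-1} \cdot \p = \p^n = \e(\G)$.

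I do not expect a genuine obstacle, because all the machinery is already in place: the exact sequence (Lemma \ref{L:4.5}), the powerfulness of $\G^{\p}$ for a regular $\G$ (Theorem \ref{th:5.4}$(i)$), and the conjecture for powerful $\p$-groups (Theorem \ref{th:5.2}). The whole argument is a single devissage step in which the subgroup $\G^{\p}$ has exponent dropped by exactly the factor $\p$ while the quotient has exponent exactly $\p$, so that the two factors in the product bound multiply back to $\e(\G)$ with no slack.
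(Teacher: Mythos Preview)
Your proof is correct and follows essentially the same route as the paper: the exact sequence of Lemma \ref{L:4.5} with $\G^{\p}\unlhd \G$, powerfulness of $\G^{\p}$ from Theorem \ref{th:5.4}$(i)$, the bound $\e(\G^{\p}\wedge \G)\mid \p^{n-1}$ from Theorem \ref{th:5.2}, and hypothesis $(ii)$ on the exponent-$\p$ quotient. The only cosmetic difference is that the paper explicitly separates off the case $n=1$, whereas your argument absorbs it uniformly (with $\G^{\p}=1$).
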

 \begin{proof}
 Since groups of exponent $\p$ are regular, one direction of the proof is trivial. To see the other direction, let $\G$ be a regular $\p$-group. Suppose $\e(\G) = \p^n$, $n >1$. Consider the exact sequence $\G^{\p}\wedge \G\rightarrow \G\wedge \G \rightarrow \frac{\G}{\G^{\p}}\wedge \frac{\G}{\G^{\p}} \rightarrow 1$, which implies $\e(\G\wedge \G)\mid \e(\im(\G^{\p}\wedge \G)) \e(\frac{\G}{\G^{\p}}\wedge \frac{\G}{\G^{\p}})$. By Theorem \ref{th:5.4}, $\G^{\p}$ is powerful and $\e(\G^{\p}) = \p^{n-1}$. Hence Theorem \ref{th:5.2} yields $\e(\G^{\p}\wedge \G)\mid \p^{n-1}$, whence $\e(\im(\G^{\p}\wedge \G))\mid \p^{n-1}$. Since $\frac{\G}{\G^{\p}}$ has exponent $\p$, our hypothesis implies that $\e(\frac{\G}{\G^{\p}}\wedge \frac{\G}{\G^{\p}})\mid \p$. Therefore $\e(\G\wedge \G)\mid \p^n$.

 \end{proof}

\section{Bounds depending on the nilpotency class}

In \cite{GE1}, G. Ellis proves that if the nilpotency class of $\G$ is $\cc$, then $\e(\M)\mid ( \e(\G))^{\ceil{\frac{\cc}{2}}}$. In \cite{PM1}, P. Moravec improves this bound by showing that $\e(\M)\mid ( \e(\G))^{2\floor{\log_2 {\cc}}}$. In the next Theorem, we improve both these bounds. The case $\cc=1$ has been excluded as the conjecture is known to be true in that case.

\begin{theorem}\label{th:6.1}
Let $\G$ be a finite group with nilpotency class $\cc>1$ and set $n= \ceil{\log_{3}(\frac{\cc+1}{2})}$. If $\e(\G)$ is odd, then $\e(\G\wedge \G) \mid (\e(\G))^n$. In particular, $\e(\M)\mid (\e(\G))^n$.
\end{theorem}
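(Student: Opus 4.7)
The plan is to argue by induction on $n = \ceil{\log_3((\cc+1)/2)}$, after reducing to the case where $\G$ is a finite $\p$-group for an odd prime $\p$. For the reduction: a finite nilpotent group of odd exponent is the direct product of its (odd) Sylow subgroups, and since the cross terms in the tensor square of a direct product of coprime-order groups vanish, one gets $\G\wedge\G \cong \prod_\p (P_\p\wedge P_\p)$; it therefore suffices to establish the bound on each Sylow subgroup.

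For the base case $n=1$, we have $\cc+1\leq 2\cdot 3 = 6$, i.e.\ $\cc\leq 5$, so Theorem \ref{th:4.6} immediately gives $\e(\G\wedge\G)\mid \e(\G)$.

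For the inductive step, when $n\geq 2$ (so necessarily $\cc\geq 6$), set $m = \ceil{(\cc+1)/3}\geq 3$. Applying Lemma \ref{L:4.5} with $\MM = \gamma_m(\G)$ and $\N = \G$ yields the exact sequence
\[ \gamma_m(\G)\wedge\G \longrightarrow \G\wedge\G \longrightarrow \frac{\G}{\gamma_m(\G)}\wedge\frac{\G}{\gamma_m(\G)} \longrightarrow 1, \]
from which $\e(\G\wedge\G)$ divides $\e(\im(\gamma_m(\G)\wedge\G))\cdot\e(\frac{\G}{\gamma_m(\G)}\wedge\frac{\G}{\gamma_m(\G)})$. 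The natural surjection $\G\otimes\G\twoheadrightarrow\G\wedge\G$ carries the image of $\gamma_m(\G)\otimes\G$ onto the image of $\gamma_m(\G)\wedge\G$; combining this with Lemma \ref{L:2.6} (whose odd-exponent hypothesis on $\gamma_m(\G)$ is satisfied, and whose $m$ is exactly our $m$) gives $\e(\im(\gamma_m(\G)\wedge\G))\mid \e(\gamma_m(\G))\mid \e(\G)$. For the second factor, $\G/\gamma_m(\G)$ has nilpotency class $\cc' = m-1\geq 2$, and from $\cc+1\leq 2\cdot 3^n$ one deduces $m\leq 2\cdot 3^{n-1}$, hence $(\cc'+1)/2 = m/2 \leq 3^{n-1}$, i.e.\ $\ceil{\log_3((\cc'+1)/2)}\leq n-1$. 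The induction hypothesis applied to $\G/\gamma_m(\G)$ then yields $\e(\frac{\G}{\gamma_m(\G)}\wedge\frac{\G}{\gamma_m(\G)})\mid (\e(\G))^{n-1}$, and multiplication finishes the proof.

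The principal obstacle is the arithmetic bookkeeping: verifying $\ceil{\log_3(m/2)}\leq n-1$ in every case, and that $m-1\geq 2$ so the inductive hypothesis is truly applicable (both reduce to the implication $n\geq 2\Rightarrow \cc\geq 6$, which holds because $\ceil{\log_3(3)}=1$). The other ingredients --- Lemma \ref{L:2.6}, Lemma \ref{L:4.5}, Theorem \ref{th:4.6}, the Sylow decomposition of $\G\wedge\G$, and the surjection $\G\otimes\G\twoheadrightarrow\G\wedge\G$ --- combine essentially automatically.
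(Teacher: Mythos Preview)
Your proposal is correct and follows essentially the same route as the paper: induction on $n$, base case via Theorem~\ref{th:4.6}, and the inductive step via the exact sequence of Lemma~\ref{L:4.5} together with Lemma~\ref{L:2.6}. Your explicit Sylow reduction to $\p$-groups is in fact a welcome clarification, since Theorem~\ref{th:4.6} is stated only for $\p$-groups while Theorem~\ref{th:6.1} concerns arbitrary finite groups of odd exponent; the paper leaves this reduction implicit.
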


\begin{proof}
The proof is by induction on $n$. Note that $n \geq \log_3(\frac{\cc+1}{2})$ if and only if $\cc \leq (3^n \times 2) - 1$. When $n = 1$, the statement follows by Theorem \ref{th:4.6}. Now we proceed to prove it for $n$. Set  $m = \ceil{\frac{\cc+1}{3}}$ and consider the exact sequence $\gamma_m(\G)\wedge \G \rightarrow \G \wedge \G \rightarrow \frac{\G}{\gamma_m(\G) }\wedge \frac{\G}{\gamma_m(\G)} \rightarrow 1$, which can be obtained from Theorem $3.1$ in \cite{ADST}. Thus $\e(\G \wedge \G)\mid \e(\im(\gamma_m(\G) \wedge \G))\e( \frac{\G}{\gamma_m(\G) }\wedge \frac{\G}{\gamma_m(\G)})$. By Lemma \ref{L:2.6}, we obtain that $\e(\im(\gamma_m(\G) \wedge \G)) \mid \e(\gamma_m(\G)) \mid \e(\G)$. Now $\frac{\G}{\gamma_m(\G)}$ is a group of nilpotency class $m-1$. Since $\cc +1\le (3^n \times 2)$, $\frac{\cc+1}{3}\le (3^{n-1}\times 2)$ giving $m\le (3^{n-1}\times 2)$. Now applying induction hypothesis to the group $\frac{\G}{\gamma_m(\G)}$, we obtain $\e( \frac{\G}{\gamma_m(\G) }\wedge \frac{\G}{\gamma_m(\G)} ) \mid (\e(\G))^{n-1}$, which proves the theorem.

\end{proof}

By the previous theorem $\e(\M) \mid (\e(\G))^2$ for a group $G$ with odd exponent and nilpotency class $\cc$ satisfying $6\leq \cc \leq 17$. Moreover, if $G$ is a $\p$-group with $\p\ge \cc$, then $\e(\M) \mid \e(\G)$. In the following theorem, we show that if $\p+1\leq \cc \leq 3\p+2$, then $\e(\M) \mid (\e(\G))^2$.

\begin{theorem}\label{th:6.2}
Let $\p$ be an odd prime and $\G$ be a finite $\p$-group of nilpotency class $\cc$. If $m := \ceil{\frac{\cc+1}{3}}\leq  \p+1$, then $\e(\G \wedge \G) \mid \e(\gamma_m (\G)) \e(\frac{\G}{\gamma_m(\G)})$. In particular, $\e(\M) \mid (\e(\G))^2$.
\end{theorem}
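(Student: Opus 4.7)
The plan is to combine the inductive step already used in the proof of Theorem \ref{th:6.1} with Theorem \ref{th:3.11}, exploiting the extra hypothesis $m \leq p+1$ to handle the quotient by $\gamma_m(\G)$ in a single application rather than recursively.

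First I would apply the exact sequence of Lemma \ref{L:4.5} (or equivalently Theorem $3.1$ of \cite{ADST}) with $\N = \gamma_m(\G)$:
\[
\gamma_m(\G)\wedge \G \;\longrightarrow\; \G\wedge \G \;\longrightarrow\; \tfrac{\G}{\gamma_m(\G)}\wedge \tfrac{\G}{\gamma_m(\G)} \;\longrightarrow\; 1,
\]
which gives
\[
\e(\G\wedge \G)\;\Big|\;\e\bigl(\im(\gamma_m(\G)\wedge \G)\bigr)\,\e\bigl(\tfrac{\G}{\gamma_m(\G)}\wedge \tfrac{\G}{\gamma_m(\G)}\bigr).
\]

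Next I would bound each of the two factors on the right. For the first factor, since $\p$ is odd the subgroup $\gamma_m(\G)$ has odd exponent, and $m = \ceil{(\cc+1)/3}$ is exactly the integer appearing in Lemma \ref{L:2.6}. Therefore Lemma \ref{L:2.6} yields $\e(\im(\gamma_m(\G)\wedge \G)) \mid \e(\gamma_m(\G))$. For the second factor, the quotient $\G/\gamma_m(\G)$ is a finite $\p$-group of nilpotency class at most $m-1$, and the hypothesis $m \leq \p+1$ gives $m-1 \leq \p$. Thus Theorem \ref{th:3.11} applies and we obtain $\e(\tfrac{\G}{\gamma_m(\G)}\wedge \tfrac{\G}{\gamma_m(\G)}) \mid \e(\G/\gamma_m(\G))$.

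Combining these two bounds produces the first assertion
\[
\e(\G\wedge \G)\;\Big|\;\e(\gamma_m(\G))\,\e\bigl(\G/\gamma_m(\G)\bigr).
\]
For the ``in particular'' statement, both $\e(\gamma_m(\G))$ and $\e(\G/\gamma_m(\G))$ divide $\e(\G)$, so their product divides $(\e(\G))^2$. Since $\M$ is the kernel of the commutator map $\G\wedge \G \to \gamma_2(\G)$, we have $\e(\M)\mid \e(\G\wedge \G) \mid (\e(\G))^2$, completing the argument.

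I do not anticipate a real obstacle: the work lies in confirming the two hypothesis checks above (that $\gamma_m(\G)$ has odd exponent, automatic from $\p$ odd, and that $\G/\gamma_m(\G)$ has class at most $\p$, which is precisely the content of the assumption $m\leq \p+1$). The whole proof is essentially the first step of the induction used in Theorem \ref{th:6.1}, but applied only once and then closed off by Theorem \ref{th:3.11}, which is why only a squared bound on $\e(\M)$ appears.
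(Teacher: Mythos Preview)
Your proof is correct and follows essentially the same approach as the paper: apply the exact sequence with $\N=\gamma_m(\G)$, bound the image term by Lemma~\ref{L:2.6}, and bound the quotient term by Theorem~\ref{th:3.11} using that $\G/\gamma_m(\G)$ has class at most $m-1\le \p$. Your write-up is in fact slightly more detailed than the paper's in verifying the hypotheses, but the argument is the same.
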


\begin{proof}
Consider the  exact sequence, $\gamma_m(G) \wedge G \rightarrow G\wedge G \rightarrow \frac{G}{\gamma_m(G)}\wedge \frac{G}{\gamma_m(G)} \rightarrow 1$. Thus we have  $\e(G\wedge G) \mid \e(\im(\gamma_m(\G)\wedge \G))\e(\frac{\G}{\gamma_m(\G)}\wedge \frac{\G}{\gamma_m(\G)})$. Now applying Lemma \ref{L:2.6} yields $\e(\im(\gamma_m(\G) \wedge \G)) \mid \e(\gamma_m(\G))$. Further by Theorem \ref{th:3.11}, we have $\e(\frac{\G}{\gamma_m(\G)}\wedge \frac{\G}{\gamma_m(\G)}) \mid \e(\frac{\G}{\gamma_m(\G)})$, and the result follows.

\end{proof}

The next lemma is crucially used in the proof of theorem $\ref{th:6.4}$. In \cite{GE1}, G.Ellis has proved the existence of a covering pair for a pair of groups $(G,N)$. Furthermore, he has given an isomorphism between $[N^*,G]$ and $N\wedge G$, where $N^*$ is a covering pair of the pair $(G,N)$. We use this isomorphism in the following theorem.

\begin{lemma}\label{L:6.3}
Let $\p$ be an odd prime and $\G$ be a finite $\p$-group. If $\N \unlhd \G$ of nilpotency class at most $\p-2$, then $\e(\N \wedge \G) \mid \e(\N)$.
\end{lemma}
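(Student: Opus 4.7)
The plan is to combine the covering-pair machinery cited in the paragraph preceding the lemma with the regular $\p$-group technology of Lemma \ref{L:2.15}. By the Ellis result invoked above, for the pair $(\G,\N)$ one obtains a group $\tilde{\G}$ containing a normal subgroup $\tilde{\N}\unlhd\tilde{\G}$ and a central subgroup $K\subseteq\ZZ(\tilde{\G})\cap[\tilde{\N},\tilde{\G}]$ such that $\tilde{\N}/K\cong\N$ and $[\tilde{\N},\tilde{\G}]\cong\N\wedge\G$. Thus it is enough to prove $\e([\tilde{\N},\tilde{\G}])\mid\e(\N)$.

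My first step is to bound the nilpotency class of $\tilde{\N}$. Since $\tilde{\N}/K\cong\N$ has class at most $\p-2$ and $K$ lies in the center, I would argue that $\gamma_{\p-1}(\tilde{\N})\subseteq K$, and hence $\gamma_{\p}(\tilde{\N})=[\gamma_{\p-1}(\tilde{\N}),\tilde{\N}]\subseteq[K,\tilde{\N}]=1$. Therefore $\tilde{\N}$ is a $\p$-group of class at most $\p-1$, in particular a regular $\p$-group, so Lemma \ref{L:2.15} applies throughout $\tilde{\N}$.

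Next, setting $e=\e(\N)$, I fix $\tilde{n}\in\tilde{\N}$ and $\tilde{g}\in\tilde{\G}$. The image of $\tilde{n}^{e}$ in $\N$ is trivial, so $\tilde{n}^{e}\in K\subseteq\ZZ(\tilde{\G})$; combining this with the identity $\tilde{g}\tilde{n}\tilde{g}^{-1}=[\tilde{n},\tilde{g}]^{-1}\tilde{n}$ and taking $e$-th powers yields
\begin{equation*}
\tilde{n}^{e}=\tilde{g}\tilde{n}^{e}\tilde{g}^{-1}=\bigl([\tilde{n},\tilde{g}]^{-1}\tilde{n}\bigr)^{e}.
\end{equation*}
Since $\tilde{\N}\unlhd\tilde{\G}$, the commutator $[\tilde{n},\tilde{g}]$ already lies in $\tilde{\N}$, so both $[\tilde{n},\tilde{g}]^{-1}\tilde{n}$ and $\tilde{n}$ sit inside the regular $\p$-group $\tilde{\N}$. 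An application of Lemma \ref{L:2.15}(iv) to this equation (with $a=[\tilde{n},\tilde{g}]^{-1}\tilde{n}$, $b=\tilde{n}$) therefore delivers $[\tilde{n},\tilde{g}]^{e}=1$.

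Finally, $[\tilde{\N},\tilde{\G}]$ is a subgroup of the regular $\p$-group $\tilde{\N}$ generated by the commutators $[\tilde{n},\tilde{g}]$, each having exponent dividing $e$; Lemma \ref{L:2.15}(iii) then gives that the whole subgroup has exponent dividing $e$, so $\e(\N\wedge\G)\mid\e(\N)$. The main obstacle I anticipate is securing the class bound on $\tilde{\N}$ in the first step: without the hypothesis that the class of $\N$ is at most $\p-2$ one would leave the regular range $<\p$, and both Lemma \ref{L:2.15}(iii) and (iv) would cease to be available.
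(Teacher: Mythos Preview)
Your argument is correct and follows essentially the same strategy as the paper: pass to the Ellis covering pair, observe that the cover of $\N$ has class at most $\p-1$ and is therefore regular, and then exploit regularity to kill the commutators. The only cosmetic differences are that the paper phrases the cover as a projective relative central extension $\delta\colon N^{*}\to G$ (a crossed module) rather than as a subgroup $\tilde{\N}\unlhd\tilde{\G}$, and that the paper invokes the implication ``$t$-central $\Rightarrow$ $t$-abelian'' (via Lemma~\ref{L:3.1}) to compute $(n\,{}^{g}n^{-1})^{t}=n^{t}\,{}^{g}n^{-t}=1$ in one shot, whereas you reach the same conclusion via Lemma~\ref{L:2.15}(iv) for the generators and Lemma~\ref{L:2.15}(iii) for their products.
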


\begin{proof}
Consider a projective relative central extension $\delta : N^* \rightarrow G$ associated with the pair $(\G,\N)$. Therefore $\delta(N^*) = N$ and $G$ acts trivially on $\Ker(\delta)$. We know from \cite{GE1} that $[N^*, G] \cong N \wedge G$. Since $N$ is of class atmost $p-2$, $N^*$ is of class at most $p-1$. Set $\e(N)= t$ and since $N^*$ is $t$-central, it is $t$-abelian. Therefore it suffices to prove that ${(n\ ^g{n^{-1}})}^t = 1$ for $n \in N^*$ and $g \in G$. Towards that end, 
 \begin{align*}
 {(n\ ^g{n^{-1}})}^t =& n^t{(^g n^{-1})}^t ,\  \mbox{since}\ N^*\ \mbox{is\ $t$ abelian} \\
 =& n^t\ ^g(n^{-t})\\
 =&1,  \mbox{\ since}\ n^t \in \Ker(\delta) .
  \end{align*}
Hence the result.
\end{proof}

\begin{theorem} \label{th:6.4}
Let $\p$ be an odd prime and $\N$ be a normal subgroup of a finite $\p$-group $\G$. If the nilpotency class of $\N$ is $\cc$, then $\e(\N\wedge \G)\mid (\e(\N))^n$, where $n = \ceil{\log_{\p-1}(\cc+1)}$.
\end{theorem}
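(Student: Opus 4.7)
The plan is to induct on $n=\ceil{\log_{\p-1}(\cc+1)}$, following the template of Theorem~\ref{th:6.1}. The base case $n=1$ forces $\cc+1\le \p-1$, i.e.\ $\cc\le \p-2$, and Lemma~\ref{L:6.3} directly gives $\e(\N\wedge \G)\mid \e(\N)$.

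For the inductive step I would set $\MM=\gamma_{\p-1}(\N)$, which is normal in $\G$ because it is characteristic in the normal subgroup $\N$. Lemma~\ref{L:4.5} then supplies the exact sequence
\[
\gamma_{\p-1}(\N)\wedge \G\ \longrightarrow\ \N\wedge \G\ \longrightarrow\ \frac{\N}{\gamma_{\p-1}(\N)}\wedge \frac{\G}{\gamma_{\p-1}(\N)}\ \longrightarrow\ 1,
\]
whence $\e(\N\wedge \G)$ divides the product $\e(\im(\gamma_{\p-1}(\N)\wedge \G))\cdot \e\left(\frac{\N}{\gamma_{\p-1}(\N)}\wedge \frac{\G}{\gamma_{\p-1}(\N)}\right)$. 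The quotient $\N/\gamma_{\p-1}(\N)$ is a normal subgroup of $\G/\gamma_{\p-1}(\N)$ of nilpotency class at most $\p-2$, so Lemma~\ref{L:6.3} bounds the second factor by $\e(\N/\gamma_{\p-1}(\N))\mid \e(\N)$.

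For the first factor I would apply the induction hypothesis to $\gamma_{\p-1}(\N)\unlhd \G$. The standard containment $\gamma_i(\gamma_{\p-1}(\N))\subseteq \gamma_{i(\p-1)}(\N)$ shows that $\gamma_{\p-1}(\N)$ has nilpotency class $\cc'\le \ceil{(\cc+1)/(\p-1)}-1$. The only arithmetic to verify is that $\ceil{\log_{\p-1}(\cc'+1)}\le n-1$, which amounts to $\ceil{(\cc+1)/(\p-1)}\le (\p-1)^{n-1}$; this is immediate from $\cc+1\le (\p-1)^n$ since the right side is an integer. Induction then yields $\e(\gamma_{\p-1}(\N)\wedge \G)\mid (\e(\gamma_{\p-1}(\N)))^{n-1}\mid (\e(\N))^{n-1}$, and multiplying the two bounds produces $\e(\N\wedge \G)\mid (\e(\N))^n$. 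The choice $m=\p-1$ is essential: a smaller $m$ would not shrink the class of $\gamma_m(\N)$ sufficiently to drive the $\log_{\p-1}$ recursion, while a larger $m$ would push the quotient beyond the reach of Lemma~\ref{L:6.3}, so the main conceptual point is simply recognising that $\p-1$ is the correct reduction index.
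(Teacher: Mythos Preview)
Your argument is correct, but it is the mirror image of the paper's. Both proofs feed Lemma~\ref{L:4.5} with a term $\gamma_m(\N)$ of the lower central series and then split the exponent of $\N\wedge\G$ into a ``Lemma~\ref{L:6.3} factor'' and an ``induction factor''; the difference is which side is which. The paper takes $m=\ceil{(\cc+1)/(\p-1)}$, so that $\gamma_m(\N)$ itself has class at most $\p-2$ and Lemma~\ref{L:6.3} handles the \emph{kernel} term $\gamma_m(\N)\wedge\G$, while the induction hypothesis (on $\cc$) is applied to the \emph{quotient} $\N/\gamma_m(\N)$, whose class $m-1\le(\p-1)^{n-1}-1$. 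You instead take $m=\p-1$, so the \emph{quotient} $\N/\gamma_{\p-1}(\N)$ has class at most $\p-2$ and receives Lemma~\ref{L:6.3}, while the induction goes into the \emph{kernel} $\gamma_{\p-1}(\N)$. Your final remark that ``$m=\p-1$ is essential'' is therefore too strong: the paper's dual choice works just as well, and in fact matches the pattern of Theorem~\ref{th:6.1}, where the induction always descends through the quotient. Functionally the two proofs are equivalent and of the same length; neither buys anything the other does not.
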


\begin{proof}
We proceed by induction on $\cc$. If $\cc\le \p-2$, then by Lemma \ref{L:6.3} $\e(\N\wedge \G)\mid \e(\N)$. Let $\cc > \p-2$ and set $m=\ceil{\frac{\cc+1}{\p-1}}$. By Lemma \ref{L:4.5}, we have an exact sequence $\gamma_m (\N)\wedge \G\rightarrow \N\wedge \G \rightarrow \frac{\N}{\gamma_m (\N)}\wedge \frac{\G}{\gamma_m (\N)} \rightarrow 1$ yielding $\e(\N\wedge \G)\mid \e(\im(\gamma_m (\N)\wedge \G)) \e(\frac{\N}{\gamma_m (\N)} \wedge \frac{\G} {\gamma_m (\N)})$. Since $m(\p-1)\ge \cc+1$, we get $\gamma_{\p-1}(\gamma_m(\N)) =1$. Now using Lemma \ref{L:6.3}, we obtain $\e(\gamma_m(\N) \wedge \G)\mid \e(\gamma_m(\N))$, and hence $\e(\im(\gamma_m(\N)\wedge \G))\mid \e(\N)$. Since $\cc+1\le (\p -1)^n$, we have $\frac{\cc+1}{\p -1}\le (\p -1)^{n-1}$, and hence $m\le (\p -1)^{n-1}$. Now by induction hypothesis, $\e(\frac{\N}{\gamma_m(\N)}\wedge \frac{\G}{\gamma_m(\N)})\mid (\e(\frac{\N}{\gamma_m (\N)}))^{n-1}$. Therefore $\e(\N\wedge \G)\mid (\e(\N))^n$.
\end{proof}

For a finite $\p$-group $\G$ with $\p$ odd, the author of \cite{NS2} proves that $\e(\M)\mid (\e(\G))^m$, where $m=\floor{\log_{\p-1} \cc}+1$. We improve his bound in the next theorem.

\begin{theorem}\label{th:6.5}
Let $\p$ be an odd prime and $\G$ be a finite $\p$-group of nilpotency class $\cc\ge \p$. Then $\e(\G\wedge \G)\mid (\e(\G))^n$, where $n = 1+\ceil{\log_{\p-1} (\frac{\cc+1}{\p+1})}$. In particular, $\e(\M)\mid (\e(\G))^n$.
\end{theorem}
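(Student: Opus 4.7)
The strategy follows the template established in Theorems \ref{th:6.1}, \ref{th:6.2}, and \ref{th:6.4}: peel off a term $\gamma_m(\G)$ of the lower central series via the exact sequence of Lemma \ref{L:4.5}, and bound the two resulting pieces separately. The key refinement here, compared to Theorem \ref{th:6.4}, is to choose $m = \p+1$, so that the quotient $\G/\gamma_{\p+1}(\G)$ has nilpotency class at most $\p$ and therefore falls under the scope of Theorem \ref{th:3.11}, which handles class $\leq \p$ \emph{without} raising $\e(\G)$ to a power. This is precisely what lets us replace the factor of $\p-1$ from \cite{NS2} by $\p+1$ inside the logarithm.

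First, I would apply Lemma \ref{L:4.5} with $\N = \G$ and $\MM = \gamma_{\p+1}(\G)$ to obtain the exact sequence
\begin{equation*}
\gamma_{\p+1}(\G) \wedge \G \longrightarrow \G \wedge \G \longrightarrow \frac{\G}{\gamma_{\p+1}(\G)} \wedge \frac{\G}{\gamma_{\p+1}(\G)} \longrightarrow 1,
\end{equation*}
which gives $\e(\G \wedge \G) \mid \e\bigl(\im(\gamma_{\p+1}(\G) \wedge \G)\bigr)\cdot \e\bigl(\frac{\G}{\gamma_{\p+1}(\G)} \wedge \frac{\G}{\gamma_{\p+1}(\G)}\bigr)$. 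Since $\G/\gamma_{\p+1}(\G)$ has nilpotency class at most $\p$, Theorem \ref{th:3.11} immediately yields that the second factor divides $\e(\G)$.

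Next, I would estimate the nilpotency class $c'$ of $\gamma_{\p+1}(\G)$. Using the standard containment $\gamma_k(\gamma_{\p+1}(\G)) \subseteq \gamma_{k(\p+1)}(\G)$, one sees $c' \leq \ceil{(\cc+1)/(\p+1)} - 1$. From the definition of $n$ we have $(\p-1)^{n-1} \geq (\cc+1)/(\p+1)$, and since the left-hand side is an integer this forces $(\p-1)^{n-1} \geq \ceil{(\cc+1)/(\p+1)} \geq c'+1$, hence $\ceil{\log_{\p-1}(c'+1)} \leq n-1$. Applying Theorem \ref{th:6.4} to the normal subgroup $\gamma_{\p+1}(\G) \unlhd \G$ then yields $\e(\gamma_{\p+1}(\G) \wedge \G) \mid (\e(\gamma_{\p+1}(\G)))^{n-1} \mid (\e(\G))^{n-1}$. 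Combining with the bound on the quotient factor gives $\e(\G \wedge \G) \mid (\e(\G))^n$, and the statement for $\e(\M)$ follows since $\M$ is a subgroup of $\G \wedge \G$.

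No serious obstacle is expected: the argument is essentially bookkeeping on ceilings and logarithms, with all the substantive content already packaged into Theorems \ref{th:3.11} and \ref{th:6.4}. The only edge case worth checking is $\cc = \p$, in which $\gamma_{\p+1}(\G) = 1$, $n = 1$, and the claim collapses to Theorem \ref{th:3.11}.
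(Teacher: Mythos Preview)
Your proposal is correct and follows essentially the same approach as the paper: apply the exact sequence from Lemma \ref{L:4.5} with $\MM=\gamma_{\p+1}(\G)$, handle the quotient via Theorem \ref{th:3.11}, bound the class of $\gamma_{\p+1}(\G)$ by $\ceil{(\cc+1)/(\p+1)}-1$, and finish with Theorem \ref{th:6.4}. Your remark that $(\p-1)^{n-1}$ being an integer forces $(\p-1)^{n-1}\ge\ceil{(\cc+1)/(\p+1)}$ even makes explicit a step the paper leaves implicit.
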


\begin{proof}
 For $\cc =\p$, we have $\e(\G\wedge \G)\mid \e(\G)$ by Theorem $\ref{th:3.11}$. Let $\cc > \p$, and consider the exact sequence $\gamma_{\p+1} (\G)\wedge \G\rightarrow \G\wedge \G \rightarrow \frac{\G}{\gamma_{\p+1} (\G)}\wedge \frac{\G}{\gamma_{\p+1} (\G)} \rightarrow 1$, which yields $\e(\G\wedge \G)\mid \e(\im(\gamma_{\p+1} (\G)\wedge \G)) \e(\frac{\G}{\gamma_{\p+1} (\G)}\wedge \frac{G}{\gamma_{\p+1} (\G)})$. By Theorem \ref{th:3.11}, $\e(\frac{\G}{\gamma_{\p+1} (\G)}\wedge \frac{\G}{\gamma_{\p+1} (\G)})\mid \e(\frac{\G}{\gamma_{\p+1} (\G)})$. Let $k$ be the nilpotency class of $\gamma_{\p+1} (\G)$. Since $\ceil{\frac{\cc+1}{\p+1}} (\p+1)\ge \cc+1$,  we obtain $k+1\le \ceil{\frac{\cc+1}{\p+1}}$. Observe that $\frac{\cc+1} {\p+1} \le (\p-1)^{n-1}$ gives $k+1\le (\p-1)^{n-1}$. Now applying Theorem \ref{th:6.4}, we get $\e(\gamma_{\p+1} (\G)\wedge \G)\mid (\e(\gamma_{\p+1} (\G)))^{n-1}$. Hence $\e(\im(\gamma_{\p+1}(\G)\wedge \G))\mid (\e(\G))^{n-1}$, and the result follows.
\end{proof}

\section{Bounds depending on the derived length}

In \cite{PM1}, the author proved that the conjecture is true for metabelian $\p$-groups of exponent $\p$. In the theorem below, we prove it for $\p$-central metabelian $\p$-groups.
\begin{theorem}\label{th:7.1}
Let $\G$ be a $\p$-central metabelian $\p$-group. Then $\e(\M) \mid \e(\G)$. 
\end{theorem}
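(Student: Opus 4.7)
The plan is to use Lemma \ref{L:4.5} with the central normal subgroup $\G^{\p}\unlhd\G$ to obtain the exact sequence
\[
\G^{\p}\wedge\G \longrightarrow \G\wedge\G \longrightarrow \tfrac{\G}{\G^{\p}}\wedge\tfrac{\G}{\G^{\p}} \longrightarrow 1,
\]
whence $\e(\G\wedge\G)\mid\e(\im(\G^{\p}\wedge\G))\cdot\e\bigl(\tfrac{\G}{\G^{\p}}\wedge\tfrac{\G}{\G^{\p}}\bigr)$, and then to bound each factor separately so that their product is $\e(\G)$.

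The case $\p=2$ should be dispatched first: $\p$-centrality forces $\G/\Z(\G)$ to have exponent $2$, hence to be abelian, so $\G$ has nilpotency class at most $2$, and the theorem of Jones (\cite{MRJ}) supplies the conclusion. From here on I take $\p$ odd.

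To bound $\e(\im(\G^{\p}\wedge\G))$, I would first note that $\G^{\p}\subseteq\Z(\G)$ (by $\p$-centrality), so $\G^{\p}$ is abelian and its generating set $\{g^{\p}:g\in\G\}$ consists of pairwise commuting elements. Every element of $\G^{\p}$ is therefore a product of commuting $\p$-th powers, each of order dividing $\e(\G)/\p$, so $\e(\G^{\p})\mid\e(\G)/\p$. Being abelian, $\G^{\p}$ is trivially a powerful normal subgroup of $\G$, so Theorem \ref{th:5.2} gives $\e(\G^{\p}\wedge\G)\mid\e(\G^{\p})\mid\e(\G)/\p$, and hence $\e(\im(\G^{\p}\wedge\G))\mid\e(\G)/\p$.

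For the other factor, $\G/\G^{\p}$ is metabelian of exponent $\p$. By the classical theorem of Meier--Wunderli, a metabelian group of prime exponent $\p$ has nilpotency class at most $\p$, so Theorem \ref{th:3.11} applies and gives $\e\bigl(\tfrac{\G}{\G^{\p}}\wedge\tfrac{\G}{\G^{\p}}\bigr)\mid\p$. Multiplying the two bounds yields $\e(\G\wedge\G)\mid(\e(\G)/\p)\cdot\p=\e(\G)$, and therefore $\e(\M(\G))\mid\e(\G\wedge\G)\mid\e(\G)$.

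The main obstacle is the verification that $\e(\G^{\p})\mid\e(\G)/\p$, which genuinely relies on $\p$-centrality: without it, $\G^{\p}$ is only known to be generated by $\p$-th powers with controlled individual orders, and in the non-abelian setting the exponent of the subgroup they generate can exceed the common bound on those orders. The remaining ingredients (Lemma \ref{L:4.5}, Theorem \ref{th:5.2}, Theorem \ref{th:3.11}, and the Meier--Wunderli class bound) are then combined in a pattern already used in Theorems \ref{th:4.6}, \ref{th:5.5} and \ref{th:5.6} of the present paper.
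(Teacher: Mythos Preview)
Your argument is correct. Both you and the paper split along the same exact sequence with $\G^{\p}$, but you handle the two factors differently.

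For the quotient factor, the paper invokes the Snake Lemma (using that the image of $\G^{\p}\wedge\G$ lands in $\M$ since $\G^{\p}$ is central) to reduce to $\MM(\G/\G^{\p})$ rather than the full exterior square, and then cites Moravec \cite{PM1} for metabelian groups of exponent~$\p$. You instead stay at the level of $\G\wedge\G$ and use Meier--Wunderli (class $\le \p-1$) together with Theorem~\ref{th:3.11}; this keeps the argument within the paper's own machinery plus one classical result, and actually yields the stronger conclusion $\e(\G\wedge\G)\mid\e(\G)$. For the subgroup factor, the paper bounds $\e(\im(\G^{\p}\wedge\G))\mid \p^{n-1}$ directly from $\p$-centrality (central elements satisfy $n^{k}\wedge h=(n\wedge h)^{k}$ and the resulting generators are central in $\G\wedge\G$), whereas you route through Theorem~\ref{th:5.2}; Lemma~\ref{L:2.5}$(ii)$ would have sufficed here as well, since $\G^{\p}$ is abelian. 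Finally, you treat $\p=2$ separately because Theorem~\ref{th:5.2} requires $\p$ odd; the paper's direct centrality argument happens to work uniformly, so it does not single out that case.
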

\begin{proof}
For groups of exponent $\p$, the theorem holds by \cite{PM1}. Now we consider groups of exponent $\p^{n}$ with $n>1$. Since $\G$ is $\p$-central, we have the commutative diagram
\begin{equation*}
\xymatrix@+20pt{
&\G^{\p}\wedge \G\ar@{->}[r]
\ar@{->}^{\alpha}[d]
 &\G\wedge G\ar@{->}[r]
\ar@{->}[d]
 &\frac{\G}{\G^{\p}}\wedge \frac{\G}{\G^{\p}} \ar@{->}[r]
\ar@{->}^{\beta}[d]
&1 \\
&1\ar@{->}[r]
 &\gamma_2(\G)\ar@{->}[r]
&\gamma_2(\G)\ar@{->}[r]
&1,
} \end{equation*}
where $\alpha$ and $\beta$ are the natural commutator maps.

Now Snake Lemma yields the exact sequence $\ker(\alpha) \rightarrow \M \rightarrow \ker(\beta) \rightarrow 1$. Since $\ker(\beta) \leq \MM(\frac{\G}{\G^{\p}})$,  we have $\e(\MM(\G)) \mid \e(\im(\G^{\p}\wedge \G))\e(\MM(\frac{\G}{\G^{\p}}))$. Observe that $\e(\im(\G^{\p}\wedge \G)) \mid \p^{n-1}$ because $G$ is $\p$-central. Furthermore $\frac{\G}{\G^{\p}}$ being a metabelian $\p$-group of exponent $\p$, $\e(\MM(\frac{\G}{\G^{\p}})) \mid \p$, and the result follows.
\end{proof}

The following Lemma can be deduced from Lemma 3 and the proof of Proposition 5 in \cite{GE2}. 
\begin{lemma}\label{L:7.2}
Let $\N$ be a normal subgroup of a group $\G$. If $\N\subset \gamma_2(\G)$, then the sequence $\N\otimes \G\rightarrow \G\otimes \G \rightarrow \frac{\G}{\N}\otimes \frac{\G}{\N} \rightarrow 1$ is exact.
\end{lemma}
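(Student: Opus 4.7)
The plan is to identify the kernel of the natural projection $\pi\colon \G\otimes\G \to (\G/\N)\otimes(\G/\N)$ with the image $L := \iota(\N\otimes\G)$ of the map $\iota\colon \N\otimes\G\to\G\otimes\G$. The inclusion $L\subseteq\ker\pi$ is immediate, since a generator $n\otimes g$ of $L$ maps to $\bar 1\otimes\bar g = 1$ (recalling that $1\otimes x = 1$ holds in any nonabelian tensor product). For the reverse inclusion, I would construct an inverse homomorphism $\phi\colon (\G/\N)\otimes(\G/\N)\to (\G\otimes\G)/L$ to the induced surjection, defined on simple tensors by $\phi(\bar g\otimes \bar h) = (g\otimes h)\,L$. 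Two-sided inversion then forces $\ker\pi = L$.

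Establishing that $\phi$ is well defined breaks into two coset-independence checks and a verification of the defining tensor-product relations. In the left coordinate, replacing $g$ by $gn$ with $n\in\N$ and using \eqref{eq:2.2.1} gives $(gn)\otimes h = ({}^g n\otimes {}^g h)(g\otimes h)$; the extra factor lies in $L$ because ${}^g n\in\N$ by normality of $\N$. In the right coordinate, \eqref{eq:2.2.2} gives $g\otimes (hn) = (g\otimes h)({}^h g\otimes {}^h n)$, so independence here reduces to the assertion that $g'\otimes n'\in L$ for every $g'\in\G$ and $n'\in\N$. Once this is known, the defining relations \eqref{eq:2.2.1} and \eqref{eq:2.2.2} for $\phi$ are inherited from those in $\G\otimes\G$.

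To prove $g'\otimes n'\in L$, I would first record that $L$ is a normal subgroup of $\G\otimes\G$: by Proposition \ref{P:2.3}(v), conjugation of a generator $n\otimes g$ by a simple tensor $g_1\otimes h_1$ yields ${}^{[g_1,h_1]}n\otimes {}^{[g_1,h_1]}g \in L$, using normality of $\N$. Next, since $\N\subseteq\gamma_2(\G)$, write $n' = [a_1, b_1]^{\epsilon_1}\cdots[a_k, b_k]^{\epsilon_k}$ with $a_i, b_i\in\G$ and $\epsilon_i = \pm 1$. Iterated application of \eqref{eq:2.2.2} expands $g'\otimes n'$ as a product of tensors of the form $g''\otimes [a,b]^{\pm 1}$ with $g''\in\G$. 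The identities \eqref{eq:2.3.3} and \eqref{eq:2.3.4} of Proposition \ref{P:2.3} allow each such tensor to be exchanged, via the $\G$-action on $\G\otimes\G$, with the companion element $[a,b]^{\pm 1}\otimes g'''$. Reassembling using \eqref{eq:2.2.1} together with the normality of $L$ collapses the resulting product to $n'\otimes g^* \in L$.

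The main obstacle is this final bookkeeping: the individual commutator factors $[a_i, b_i]$ of $n'$ need not themselves lie in $\N$, so one cannot apply the Proposition \ref{P:2.3} identities factor-by-factor and land in $L$ piece by piece. Instead, the conjugating elements ${}^{c_1\cdots c_{i-1}}g'$ arising from repeated \eqref{eq:2.2.2} expansions must be tracked so that only the full product $\prod [a_i, b_i]^{\epsilon_i} = n'\in\N$ is reconstituted as a single tensor, with the leftover conjugations absorbed by the normality of $L$. This careful manipulation is exactly what is packaged in the deduction from Lemma 3 and the proof of Proposition 5 in \cite{GE2}.
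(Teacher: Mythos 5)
Your overall architecture is sound and, for what it is worth, more explicit than the paper's treatment, which offers no argument at all and simply defers to Lemma 3 and the proof of Proposition 5 of \cite{GE2}. You correctly reduce everything to the single claim that $g\otimes n$ lies in $L=\im(\N\otimes\G)$ for all $g\in\G$, $n\in\N$: normality of $L$ via \eqref{eq:2.3.2}, coset-independence in the left slot via \eqref{eq:2.2.1}, and the observation that coset-independence in the right slot and the inheritance of the defining relations both hinge on this one claim. That claim is also exactly where the hypothesis $\N\subset\gamma_2(\G)$ must enter; note that the exterior analogue, Lemma \ref{L:4.5}, needs no such hypothesis precisely because $g\wedge n=(n\wedge g)^{-1}$ holds identically, whereas in $\G\otimes\G$ it does not.

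Your proposed proof of that claim, however, is a genuine gap, and you essentially say so yourself. Writing $n=\prod_i[a_i,b_i]^{\epsilon_i}$ and expanding $g\otimes n$ by \eqref{eq:2.2.2} produces factors ${}^{d_i}g\otimes{}^{d_i}[a_i,b_i]^{\epsilon_i}$ whose right-hand entries need not lie in $\N$; the identities \eqref{eq:2.3.3} and \eqref{eq:2.3.4} rewrite $g_1\otimes[a,b]$ and $[a,b]\otimes h_1$ as ${}^{g_1}t\,t^{-1}$ and $t^{h_1}t^{-1}$ with $t=a\otimes b$, but these two expressions are not inverse to one another, and no amount of tracking the conjugators reassembles the product into a single tensor $n\otimes g^{*}$: the left-hand entries of your factors are conjugates of $g$, not the commutator pieces of $n$. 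The step that actually closes the argument is different and much shorter. By \cite{BJR}, the elements $(x\otimes y)(y\otimes x)$ are central in $\G\otimes\G$ and the assignment $(\bar x,\bar y)\mapsto(x\otimes y)(y\otimes x)$ induces a homomorphism from $\G^{ab}\otimes_{\mathbb Z}\G^{ab}$ to $\G\otimes\G$. Since $n\in\gamma_2(\G)$ has trivial image in $\G^{ab}$, this yields $(g\otimes n)(n\otimes g)=1$, hence $g\otimes n=(n\otimes g)^{-1}\in L$ outright, with no commutator expansion or bookkeeping. With that substitution your proof becomes complete and self-contained; as written, the decisive step is still being outsourced to the same citation the paper relies on.
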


In \cite{PM1}, the author showed that if $\dd$ is the derived length of $G$, then $\e(\M)\mid ( \e(\G))^{2(\dd-1)}$. The author of \cite{NS1} improved this bound for $\p$-groups by proving that $\e(\M)\mid (\e(\G))^{\dd}$ when $\p$ is odd, and $\e(\M)\mid 2^{\dd-1}(\e(\G))^{\dd}$, when $\p=2$. Using our techniques, we obtain the following generalization of Theorem A of \cite{NS1}. 
\begin{theorem}\label{th:7.3}
Let $\G$ be a solvable group of derived length $\dd$. 
\begin{itemize}
\item[$(i)$] If $\e(\G)$ is odd, then $\e(\G\otimes \G)\mid (\e(\G))^{\dd}$. In particular, $\e(\M)\mid (\e(\G))^{\dd}$.
\item[$(ii)$] If $\e(\G)$ is even, then $\e(\G\otimes \G)\mid 2^{\dd-1}(\e(\G))^{\dd}$. In particular, $\e(\M)\mid 2^{\dd-1}(\e(\G))^{\dd}$.
\end{itemize}
\end{theorem}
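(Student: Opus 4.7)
The plan is to induct on the derived length $\dd$, using the exact sequence from Lemma~\ref{L:7.2} at each step to peel off the last non-trivial term of the derived series and Lemma~\ref{L:2.5} to bound the exponent of the resulting abelian-normal tensor piece.

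The base case $\dd=1$ will be handled directly: since $\G$ is abelian, the mutual conjugation actions of Definition~\ref{D:2.2} are trivial and the defining relations of $\G\otimes\G$ collapse to $\mathbb{Z}$-bilinearity. A short calculation with those relations shows that $\G\otimes\G$ is itself abelian and coincides with the ordinary tensor product $\G\otimes_{\mathbb{Z}}\G$, whose exponent divides $\e(\G)$. This matches both $(i)$ and $(ii)$ at $\dd=1$, since $2^{0}(\e(\G))^{1}=\e(\G)$.

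For the inductive step I take $\dd\ge 2$, set $\N=\G^{(\dd-1)}$ (the last non-trivial term of the derived series), and note that $\N$ is abelian, normal in $\G$, and contained in $\G^{(1)}=\gamma_2(\G)$. Lemma~\ref{L:7.2} then supplies the exact sequence
\begin{equation*}
\N\otimes\G\longrightarrow \G\otimes\G\longrightarrow \frac{\G}{\N}\otimes \frac{\G}{\N}\longrightarrow 1,
\end{equation*}
so $\e(\G\otimes\G)$ divides $\e(\im(\N\otimes\G))\cdot\e(\frac{\G}{\N}\otimes\frac{\G}{\N})$. Lemma~\ref{L:2.5} bounds the first factor by $\e(\N)\mid\e(\G)$ in case $(i)$ and by $2\e(\N)\mid 2\e(\G)$ in case $(ii)$. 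The quotient $\G/\N$ is solvable of derived length $\dd-1$ with $\e(\G/\N)\mid\e(\G)$, so the inductive hypothesis bounds the second factor by $(\e(\G))^{\dd-1}$ in case $(i)$ and by $2^{\dd-2}(\e(\G))^{\dd-1}$ in case $(ii)$. Multiplying produces exactly $(\e(\G))^{\dd}$ and $2^{\dd-1}(\e(\G))^{\dd}$ respectively. The statements about $\M$ then follow because $\M$ is a quotient of the kernel of the induced homomorphism $\G\otimes\G\to\gamma_2(\G)$, giving $\e(\M)\mid\e(\G\otimes\G)$.

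The main obstacle will be keeping the powers of $2$ from accumulating in case $(ii)$. Each inductive step contributes a single factor of $2$ via Lemma~\ref{L:2.5}$(iii)$, so the target exponent $\dd-1$ forces the base case to contribute none at all. This is precisely why the abelian case must be treated using the sharp identification $\G\otimes\G\cong\G\otimes_{\mathbb{Z}}\G$ rather than the generic estimate $\e(\N\otimes\MM)\mid 2\e(\N)$ of Lemma~\ref{L:2.5}$(iii)$; the latter would yield only $2\e(\G)$ at $\dd=1$ and balloon to $2^{\dd}(\e(\G))^{\dd}$ after iteration.
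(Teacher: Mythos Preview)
Your proposal is correct and follows essentially the same approach as the paper: induction on $\dd$, with the exact sequence of Lemma~\ref{L:7.2} applied to $\N=\G^{(\dd-1)}$ and Lemma~\ref{L:2.5}$(ii)$/$(iii)$ handling the abelian piece, exactly as in the text. Your extra care with the base case (sharp identification $\G\otimes\G\cong\G\otimes_{\mathbb Z}\G$ to avoid a spurious factor of $2$) and your remark that $\M$ is a quotient of $\ker(\G\otimes\G\to\gamma_2(\G))$ are both fine and make explicit what the paper leaves implicit.
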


\begin{proof}
The proof proceeds by induction on $d$. When $\dd =1$, $\G$ is abelian and hence $\e{(\G\otimes \G)}\mid \e{(\G)}$. Let $\dd >1$ and $\G^{(\dd-1)}$ denote the $d$th term of the derived series. Now consider the exact sequence $\G^{(\dd-1)}\otimes \G\rightarrow \G\otimes \G \rightarrow \frac{\G}{\G^{(\dd-1)}}\otimes \frac{\G}{\G^{(\dd-1)}} \rightarrow 1$ obtained from Lemma \ref{L:7.2}. Thus we have $\e(\G\otimes \G)\mid \e(\im(\G^{(\dd-1)}\otimes \G))\e(\frac{\G}{\G^{(\dd-1)}}\otimes \frac{\G}{\G^{(\dd-1)}})$.

\begin{itemize}
\item[$(i)$] Let $\e(\G)$ be odd. Since $\G^{(\dd-1)}$ is abelian, Lemma \ref{L:2.5}$(ii)$ implies $\e(\G^{(\dd-1)}\otimes G)\mid \e(\G^{(\dd-1)})$. Now induction hypothesis yields $\e(\frac{\G}{\G^{(\dd-1)}}\otimes \frac{\G}{\G^{(\dd-1)}})\mid (\e(\frac{\G}{\G^{(\dd-1)}}))^{\dd-1}$, the result follows.

\item[$(ii)$] Now using Lemma \ref{L:2.5}$(iii)$, the proof follows mutatis mutandis the proof of $(i)$.
\end{itemize}
\end{proof}

In the next lemma, we consider $\N \wedge \G$ instead of $\G\otimes \G$.

\begin{lemma}\label{L:7.4}
Let $\N\unlhd \G$. Suppose $\N$ is solvable of derived length $\dd$. 
\begin{itemize}
\item[(i)] If $\e(\N)$ is odd, then $\e(\N\wedge \G)\mid (\e(\N))^{\dd}$. In particular, $\e(\MM(\G,\N))\mid (\e(\N))^{\dd}$.
\item[(ii)] If $\e(\N)$ is even, then $\e(\N\wedge \G)\mid 2^{\dd}(\e(\N))^{\dd}$. In particular, $\e(\MM(\G,\N))\mid 2^{\dd}(\e(\N))^{\dd}$.
\end{itemize}
\end{lemma}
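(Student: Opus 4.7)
The plan is to mirror the induction on derived length used in Theorem \ref{th:7.3}, but with the exterior-square exact sequence of Lemma \ref{L:4.5} in place of Lemma \ref{L:7.2}. The induction variable is $\dd$, and parts (i) and (ii) are proved simultaneously.

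For the base case $\dd=1$, $\N$ is abelian and normal in $\G$, so Lemma \ref{L:2.5}(ii),(iii) applied with $\MM = \G$ give $\e(\N \otimes \G) \mid \e(\N)$ in the odd case and $\e(\N \otimes \G) \mid 2\e(\N)$ in the even case; passing to the quotient $\N \wedge \G$ yields the $\dd=1$ statements of (i) and (ii). For the inductive step, I would set $\MM = \N^{(\dd-1)}$, the last nontrivial term of the derived series of $\N$. Since the derived series of $\N$ consists of characteristic subgroups and $\N \unlhd \G$, $\MM$ is normal in $\G$, so Lemma \ref{L:4.5} supplies the exact sequence
\begin{equation*}
\N^{(\dd-1)} \wedge \G \longrightarrow \N \wedge \G \longrightarrow \frac{\N}{\N^{(\dd-1)}} \wedge \frac{\G}{\N^{(\dd-1)}} \longrightarrow 1.
\end{equation*}
Thus $\e(\N \wedge \G)$ divides the product of the exponents of the image on the left and of the exterior product on the right. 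Because $\N^{(\dd-1)}$ is abelian of exponent dividing $\e(\N)$, the base-case argument bounds the left image by $\e(\N)$ in case (i) and by $2\e(\N)$ in case (ii). The quotient $\N/\N^{(\dd-1)}$ is normal in $\G/\N^{(\dd-1)}$, is solvable of derived length $\dd-1$, and has exponent dividing $\e(\N)$, so the inductive hypothesis bounds the right-hand factor by $(\e(\N))^{\dd-1}$ in case (i) and by $2^{\dd-1}(\e(\N))^{\dd-1}$ in case (ii). Multiplying the two factors produces the claimed $(\e(\N))^{\dd}$ and $2^{\dd}(\e(\N))^{\dd}$ respectively. The \emph{in particular} assertion for $\MM(\G,\N)$ is then immediate from its realization as a subgroup of $\N \wedge \G$, just as $\M$ sits inside $\G \wedge \G$.

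I do not anticipate a serious obstacle. The one technical point to verify is that $\N^{(\dd-1)}$ is normal in $\G$, which holds because derived subgroups of $\N$ are characteristic. A minor subtlety in case (ii) is that when $\N/\N^{(\dd-1)}$ happens to have odd exponent, the inductive hypothesis delivers the stronger bound $(\e(\N))^{\dd-1}$, which still divides $2^{\dd-1}(\e(\N))^{\dd-1}$, so the uniform even-case bound is preserved. Everything else is a direct book-keeping of exponents along an exact sequence, exactly in the style of Theorem \ref{th:7.3}.
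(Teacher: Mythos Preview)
Your proposal is correct and matches the paper's own proof essentially line for line: induction on $\dd$, base case via Lemma \ref{L:2.5}(ii),(iii), and the inductive step via the exact sequence of Lemma \ref{L:4.5} with $\MM=\N^{(\dd-1)}$, exactly as in Theorem \ref{th:7.3}. The paper's version is terser (it literally writes ``mutatis mutandis the proof of Theorem \ref{th:7.3}''), but the content is identical, including your observation that $\N^{(\dd-1)}$ is normal in $\G$ because it is characteristic in $\N$.
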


\begin{proof}
We prove the Lemma by induction on $\dd$. For $\dd =1$, the theorem follows from Lemma \ref{L:2.5}$(ii)$ and $(iii)$. Let $\dd >1$, and consider the exact sequence $\N^{(\dd -1)}\wedge \G\rightarrow \N\wedge \G\rightarrow \frac{\N}{\N^{(\dd-1)}}\wedge \frac{\G}{\N^{(\dd-1)}}\rightarrow 1$ obtained from Lemma $\ref{L:4.5}$. Now the proof follows mutatis mutandis the proof of Theorem $\ref{th:7.3}$.

\end{proof}

In the following table, we list and compare the values of $m$ for which $\e(\M) \mid \e(\G)^m$, obtained by G. Ellis, P. Moravec and Theorem \ref{th:6.1} of this paper.
\begin{center}
\textbf{Table I}
\end{center}
\begin{center}
\begin{tabular}{ | m{7em} | m{7em}| m{7em} | m{7em} | m{7em}|}
\hline
    & G. Ellis \cite{GE2} &  P. Moravec \cite{PM1} & \\
 \hline
 $c$ & $\ceil{\frac{\cc}{2}}$ & $2\floor{\log_2\cc}$ & $\ceil{\log_3(\frac{\cc+1}{2})}$\\
 \hline
 3  & 2  & 2  & 1\\
\hline
 4  & 2  & 4  & 1\\
 \hline
 5  & 3  & 4  & 1\\
 \hline
 6  & 3  & 4  & 2\\
  \hline
 17  & 9  & 8  & 2\\
  \hline
 53  & 27  & 10  & 3\\
  \hline
 161  & 81  & 14  & 4\\
  \hline
\end{tabular}
\end{center}

P. Moravec improves the bound given by G. Ellis for $\cc > 11$. It can be seen that the bound obtained in Theorem \ref{th:6.1} improves the other bounds.
In the following table, we consider $\p$-groups of nilpotency class $\cc$ and exponent $\p^n$. The bounds $\p^m$, where $\e(\M) \mid \p^m$, obtained by P. Moravec, N. Sambonet and Theorem \ref{th:6.5} are listed.
\begin{center}
\textbf{Table II}
\end{center}
\begin{center}
\begin{tabular}{ | m{1cm} | m{1cm}| m{1cm} | m{7em} | m{7em}| m{7em} |} 
\hline
&  &  & P. Moravec \cite{PM1} & N. Sambonet \cite{NS2}& \\ 
\hline
$\cc$ & $\p$ & $n$ & $\p^{k\floor{\log_2\cc}}$ & $\p^{n(\floor{\log_{\p-1}\cc}+1)}$ &  $\p^{n(1+\ceil{\log_{\p-1}\frac{\cc+1}{\p+1}})}$\\
 \hline
 5 & 3 & 1 & $3^2$ & $3^3$ & $3^2$ \\
 \hline 
 5 & 3 & 2 & $3^8$ & $3^6$ & $3^4$ \\
 \hline
 7 & 7 & 1 & $7^2$ & $7^2$ & $7$ \\
 \hline
 15 & 13 & 2 & $13^{9}$ & $13^4$ & $13^4$ \\
 \hline
 24 & 5 & 1 & $5^{4}$ & $5^{3}$ & $5^{2}$\\
 \hline
 168 & 13 & 1 & $13^{14}$ & $13^3$ & $13^2$ \\
 \hline
\end{tabular}
\end{center}

where $k$ is defined in \cite{PM1}.

We end this paper by making the following conjecture for which there is no counterexample so far. The two counterexamples to Schur's conjecture found for $2$-groups is not a counterexample to the following conjecture.

\begin{con}
Let $G$ be a finite $p$ group. Then $\e(\M)\mid \p \e(\G)$, i.e exponent of the Schur multiplier divides $\p$ times the exponent of the group.
\end{con}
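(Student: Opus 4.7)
The plan is to combine the exact-sequence machinery developed throughout the paper with careful bookkeeping on the exponent, and to hope that the tight bound $\p\,\e(\G)$ can be extracted rather than a polynomial in $\e(\G)$. By the standard Sylow reduction (Theorem 4, Chapter IX of \cite{JPS}) it suffices to treat $\p$-groups. For every class handled in Sections 3--6 (class at most $\p$ for odd $\p$, class $5$ of odd exponent, powerful, potent, Wilson-type, $\p$-central metabelian $\p$-groups) the strictly stronger bound $\e(\M)\mid \e(\G)$ is already proved, so the conjecture holds in these situations without needing the extra factor of $\p$. The open cases are therefore $\p$-groups of larger class, with the $\p=2$ situation being especially delicate: the Bayes--Kautsky--Wamsley counterexample realizes $\e(\M)=2\,\e(\G)$, showing the conjectured bound to be sharp.

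The strategy I would attempt is induction on $\e(\G)$ via the exact sequence of Lemma \ref{L:4.5} applied to $\N=\G^{\p}$:
\[
\G^{\p}\wedge \G \longrightarrow \G\wedge \G \longrightarrow \frac{\G}{\G^{\p}}\wedge \frac{\G}{\G^{\p}}\longrightarrow 1.
\]
This gives $\e(\G\wedge \G)\mid \e(\im(\G^{\p}\wedge \G))\cdot \e\bigl(\tfrac{\G}{\G^{\p}}\wedge \tfrac{\G}{\G^{\p}}\bigr)$. The quotient $\G/\G^{\p}$ has exponent $\p$, and if one could establish the conjecture for exponent-$\p$ groups, namely $\e(\M(H))\mid \p^{2}$ whenever $\e(H)\mid \p$, then the right factor contributes at most $\p^{2}$. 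For the left factor, if $\G^{\p}$ happens to be powerful (for instance if $\G$ is regular, or satisfies one of the conditions of Theorem \ref{th:5.5}), then Theorem \ref{th:5.2} yields $\e(\G^{\p}\wedge \G)\mid \e(\G^{\p})=\e(\G)/\p$, and the two contributions multiply to at most $\p\,\e(\G)$, as required. One would then attempt to close the induction by appealing to the same kind of structural result on $\G^{\p}$ produced by Theorem \ref{th:5.4}.

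The principal obstacle is twofold. First, there is no general tool that bounds $\e(\M(H))$ by $\p^{2}$ when $H$ has exponent $\p$; this is essentially a weakened form of Schur's original conjecture for exponent-$\p$ groups and remains entirely open, although the present paper brings it within reach for several structured classes. Second, for $\p=2$, which is the only prime for which the conjecture is known to be sharp, $\G^{2}$ need not be powerful, so Theorem \ref{th:5.2} is not directly applicable; moreover the base of the induction shrinks from class $\p$ down to class $2$ (by Jones \cite{MRJ}), leaving very little room before the exact-sequence bookkeeping starts absorbing extra factors of $2$. Any successful proof would therefore need either a new structural result on exponent-$\p$ groups giving $\e(\M)\mid \p^{2}$, or a uniform bounding technique that succeeds simultaneously in class and derived length without paying an additional factor of $\p$ at every step. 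Because the conjectured bound is so tight, I expect the hardest step to be the exponent-$\p$ base case rather than the inductive step, and a serious attack on that base case appears to require an idea beyond the collection-formula plus exact-sequence methods of this paper.
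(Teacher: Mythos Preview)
The statement you are attempting is explicitly labelled a \emph{Conjecture} in the paper and is left open; the authors offer no proof, only the remark that the two known $2$-group counterexamples to Schur's original conjecture do not violate this weakened bound. So there is no ``paper's own proof'' to compare against.

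Your write-up is therefore not a proof but a survey of what the paper's machinery would give if one tried to push it through, and in that respect it is accurate: the exact-sequence reduction via $\G^{\p}$ together with Theorem~\ref{th:5.2} does recover the bound whenever $\G^{\p}$ is powerful and the exponent-$\p$ base case is known, and you correctly isolate the two genuine obstructions, namely (i) the absence of any general bound $\e(\M(H))\mid \p^{2}$ for groups $H$ of exponent $\p$, and (ii) the failure of $\G^{\p}$ to be powerful for arbitrary $\p$-groups (especially for $\p=2$). Since the statement is an open conjecture, identifying these gaps is the appropriate outcome; just be explicit at the outset that you are explaining \emph{why} the paper's techniques do not settle it, rather than presenting a proof.
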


\section*{Acknowledgements} In a private communication with the third author, P. Moravec had mentioned that for groups with nilpotency class 5, he could prove $\e(\M)\mid (\e(\G))^3$, while the computer evidence indicated that the bound should be 2 instead of 3. Later he himself proved the bound to be 2 in \cite{PM5}. We thank P. Moravec for sharing this insight with us.

\section*{References}
\bibliographystyle{amsplain}
\bibliography{Bibliography}

\providecommand{\bysame}{\leavevmode\hbox to3em{\hrulefill}\thinspace}
\providecommand{\MR}{\relax\ifhmode\unskip\space\fi MR }
\providecommand{\MRhref}[2]{%
  \href{http://www.ams.org/mathscinet-getitem?mr=#1}{#2}
}
\providecommand{\href}[2]{#2}
\begin{thebibliography}{10}

\bibitem{ADST}
A.~E. Antony, G.~Donadze, V.~P. Sivaprasad, and V.~Z. Thomas, \emph{The second
  stable homotopy group of the {E}ilenberg-{M}aclane space}, Math. Z.
  \textbf{287} (2017), no.~3-4, 1327--1342.

\bibitem{DA}
Deane~E. Arganbright, \emph{The power-commutator structure of finite
  {$p$}-groups}, Pacific J. Math. \textbf{29} (1969), 11--17.

\bibitem{BKW}
A.~J. Bayes, J.~Kautsky, and J.~W. Wamsley, \emph{Computation in nilpotent
  groups (application)}, Springer, Berlin, 1974.

\bibitem{BJR}
R.~Brown, D.~L. Johnson, and E.~F. Robertson, \emph{Some computations of
  nonabelian tensor products of groups}, J. Algebra \textbf{111} (1987), no.~1,
  177--202.

\bibitem{BL1}
Ronald Brown and Jean-Louis Loday, \emph{Excision homotopique en basse
  dimension}, C. R. Acad. Sci. Paris S\'{e}r. I Math. \textbf{298} (1984),
  no.~15, 353--356.

\bibitem{BL2}
\bysame, \emph{Van {K}ampen theorems for diagrams of spaces}, Topology
  \textbf{26} (1987), no.~3, 311--335, With an appendix by M. Zisman.

\bibitem{RKD}
R~Keith Dennis, \emph{In search of new homology functors having a close
  relationship to k-theory}, Preprint, Cornell University (1976), E6.

\bibitem{DLT}
G.~Donadze, M.~Ladra, and V.~Z. Thomas, \emph{On some closure properties of the
  non-abelian tensor product}, J. Algebra \textbf{472} (2017), 399--413.

\bibitem{GE1}
Graham Ellis, \emph{The {S}chur multiplier of a pair of groups}, Appl. Categ.
  Structures \textbf{6} (1998), no.~3, 355--371.

\bibitem{GE2}
\bysame, \emph{On the relation between upper central quotients and lower
  central series of a group}, Trans. Amer. Math. Soc. \textbf{353} (2001),
  no.~10, 4219--4234.

\bibitem{EL}
Graham Ellis and Frank Leonard, \emph{Computing {S}chur multipliers and tensor
  products of finite groups}, Proc. Roy. Irish Acad. Sect. A \textbf{95}
  (1995), no.~2, 137--147.

\bibitem{SZ}
J.~Gonz\'{a}lez-S\'{a}nchez and A.~Jaikin-Zapirain, \emph{On the structure of
  normal subgroups of potent {$p$}-groups}, J. Algebra \textbf{276} (2004),
  no.~1, 193--209.

\bibitem{MH}
Marshall Hall, Jr., \emph{The theory of groups}, Chelsea Publishing Co., New
  York, 1976, Reprinting of the 1968 edition. \MR{0414669}

\bibitem{MRJ}
Michael~R. Jones, \emph{Some inequalities for the multiplicator of a finite
  group. {II}}, Proc. Amer. Math. Soc. \textbf{45} (1974), 167--172.

\bibitem{KG}
Gregory Karpilovsky, \emph{The {S}chur multiplier}, London Mathematical Society
  Monographs. New Series, vol.~2, The Clarendon Press, Oxford University Press,
  New York, 1987. \MR{1200015}

\bibitem{LM}
Alexander Lubotzky and Avinoam Mann, \emph{Powerful {$p$}-groups. {I}. {F}inite
  groups}, J. Algebra \textbf{105} (1987), no.~2, 484--505.

\bibitem{AM}
Avinoam Mann, \emph{Groups with few class sizes and the centralizer equality
  subgroup}, Israel J. Math. \textbf{142} (2004), 367--380. \MR{2085724}

\bibitem{MHM2}
B.~Mashayekhy, A.~Hokmabadi, and F.~Mohammadzadeh, \emph{On a conjecture of a
  bound for the exponent of the {S}chur multiplier of a finite {$p$}-group},
  Bull. Iranian Math. Soc. \textbf{37} (2011), no.~4, 235--242.

\bibitem{SM}
Susan McKay, \emph{Finite {$p$}-groups}, Queen Mary Maths Notes, vol.~18,
  University of London, Queen Mary, School of Mathematical Sciences, London,
  2000. \MR{1802994}

\bibitem{MC}
Clair Miller, \emph{The second homology group of a group; relations among
  commutators}, Proc. Amer. Math. Soc. \textbf{3} (1952), 588--595.

\bibitem{MHM1}
Fahimeh Mohammadzadeh, Azam Hokmabadi, and Behrooz Mashayekhy, \emph{On the
  exponent of the {S}chur multiplier of a pair of finite {$p$}-groups}, J.
  Algebra Appl. \textbf{12} (2013), no.~8, 1350053, 11.

\bibitem{PM1}
Primo\v{z} Moravec, \emph{Schur multipliers and power endomorphisms of groups},
  J. Algebra \textbf{308} (2007), no.~1, 12--25.

\bibitem{PM2}
\bysame, \emph{The exponents of nonabelian tensor products of groups}, J. Pure
  Appl. Algebra \textbf{212} (2008), no.~7, 1840--1848.

\bibitem{PM3}
\bysame, \emph{On the exponent semigroups of finite {$p$}-groups}, J. Group
  Theory \textbf{11} (2008), no.~4, 511--524.

\bibitem{PM4}
\bysame, \emph{On pro-{$p$} groups with potent filtrations}, J. Algebra
  \textbf{322} (2009), no.~1, 254--258.

\bibitem{PM5}
\bysame, \emph{On the exponent of {B}ogomolov multipliers}, J. Group Theory
  \textbf{22} (2019), no.~3, 491--504.

\bibitem{R}
N.~R. Rocco, \emph{On a construction related to the nonabelian tensor square of
  a group}, Bol. Soc. Brasil. Mat. (N.S.) \textbf{22} (1991), no.~1, 63--79.

\bibitem{NS1}
Nicola Sambonet, \emph{The unitary cover of a finite group and the exponent of
  the {S}chur multiplier}, J. Algebra \textbf{426} (2015), 344--364.

\bibitem{NS2}
\bysame, \emph{Bounds for the exponent of the {S}chur multiplier}, J. Pure
  Appl. Algebra \textbf{221} (2017), no.~8, 2053--2063.

\bibitem{IS}
J.~Schur, \emph{\"{U}ber die {D}arstellung der endlichen {G}ruppen durch
  gebrochen lineare {S}ubstitutionen}, J. Reine Angew. Math. \textbf{127}
  (1904), 20--50. \MR{1580631}

\bibitem{JPS}
Jean-Pierre Serre, \emph{Local fields}, Graduate Texts in Mathematics, vol.~67,
  Springer-Verlag, New York-Berlin, 1979, Translated from the French by Marvin
  Jay Greenberg.

\bibitem{VT}
Viji~Z. Thomas, \emph{The non-abelian tensor product of finite groups is
  finite: a homology-free proof}, Glasg. Math. J. \textbf{52} (2010), no.~3,
  473--477.

\bibitem{W}
J.~H.~C. Whitehead, \emph{A certain exact sequence}, Ann. of Math. (2)
  \textbf{52} (1950), 51--110.

\bibitem{LEW}
Lawrence~E. Wilson, \emph{The power-commutator structure of certain finite
  {$p$}-groups}, J. Group Theory \textbf{7} (2004), no.~1, 75--80.

\end{thebibliography}
\end{document}